\documentclass[11pt]{amsart}
\usepackage[T1]{fontenc}
\usepackage{lmodern,comment}
\usepackage{amsmath,amssymb,mathtools}
\usepackage{booktabs}
\usepackage[margin=1.1in]{geometry}
\usepackage{microtype}
\usepackage[colorlinks=true,linkcolor=blue,citecolor=blue,urlcolor=blue]{hyperref}
\usepackage[dvipsnames]{xcolor}

\definecolor{FaridColor}{RGB}{166,76,0}

\newtheorem{theorem}{Theorem}[section]
\newtheorem{proposition}[theorem]{Proposition}
\newtheorem{lemma}[theorem]{Lemma}
\newtheorem{corollary}[theorem]{Corollary}
\theoremstyle{definition}
\newtheorem{definition}[theorem]{Definition}

\newtheorem{question}[theorem]{Question}
\theoremstyle{remark}
\newtheorem{remark}[theorem]{Remark}
\theoremstyle{plain}

\newtheorem{maintheorem}{Theorem}

\newtheorem{mainproposition}[maintheorem]{Proposition}

\numberwithin{equation}{section}

\newcommand{\kk}{\Bbbk}
\newcommand{\NCSym}{\operatorname{NCSym}}
\newcommand{\NCQSym}{\operatorname{NCQSym}}
\newcommand{\FQSym}{\operatorname{FQSym}}
\newcommand{\Sym}{\operatorname{Sym}}
\newcommand{\SC}{\operatorname{SC}}
\newcommand{\SI}{\operatorname{SI}}

\newcommand{\supp}{\operatorname{supp}}

\newcommand{\Hilb}{\operatorname{Hilb}}
\newcommand{\tr}{\operatorname{tr}}

\newcommand{\Span}{\operatorname{span}}
\newcommand{\std}{\operatorname{std}}
\newcommand{\inv}[1]{\left\langle #1\right\rangle}

\title{Finite noncommutative coinvariant algebras}
\author{Farid Aliniaeifard}
\address{
	F. Aliniaeifard: Research Center for Mathematics and Interdisciplinary Sciences,
	Shandong University \& Frontiers Science Center for Nonlinear Expectations, Ministry of Education,
	Qingdao, Shandong 266237, P.R.~China}
\email{farid@sdu.edu.cn}
\author{Nantel Bergeron}
\address{N. Bergeron: Mathematics and Statistics, York University, Ontario, Canada and Shandong Normal University, Jinan, P.R.~China}
\email{bergeron@yorku.ca}
\thanks{N. Bergeron: partially supported by an NSERC discovery grant}
\author{Huilan Li}
\address{H. Li: School of Mathematics and Statistics, Shandong Normal University, Jinan, P.R.~China}
\email{lihl@sdnu.edu.cn}
\author{Shu Xiao Li}
\address{
	S. Li: Research Center for Mathematics and Interdisciplinary Sciences,
	Shandong University \& Frontiers Science Center for Nonlinear Expectations, Ministry of Education,
	Qingdao, Shandong 266237, P.R.~China}
\email{lishuxiao@sdu.edu.cn}
\thanks{S. Li: partially supported by the Provincial Natural Science Foundation of Shandong, Project No. ZR2024QA026 and the Fundamental Research Funds for the Central Universities.}
\keywords{Symmetric functions in noncommuting variables, quasisymmetric functions,
coinvariants, nilpotence}

\begin{document}
\begin{abstract}
We study quotients of free associative algebras by the two-sided
ideals generated by positive-degree symmetric, quasisymmetric, or free
quasisymmetric polynomials.
Free quasisymmetric functions give a word realization of the
Malvenuto--Reutenauer Hopf algebra of permutations.
For quotients on $n$ variables, finiteness in the symmetric and
quasisymmetric cases follows from a theorem of Bergman--Isaacs
when $n!$ is invertible in the field.

Our main result is a nilpotence criterion
for families of relations compatible with restriction to subalphabets,
based on a filtration by word support. The criterion proves finiteness over
every field for quasisymmetric, $r$-quasisymmetric, and free
quasisymmetric coinvariants, with explicit degree bounds exponential
in $n$ for fixed $r$. In the quasisymmetric and $r$-quasisymmetric
cases, these can dramatically improve the bounds inherited from
Bergman--Isaacs when $n!$ is invertible, already for small alphabets.
A principal application is to the free quasisymmetric case, where,
for $n\geq3$, the group-invariant argument used for the symmetric
case does not apply.

For the symmetric quotient $C_n$, we prove that invertibility of
$n!$ is also necessary for finiteness.
In characteristic zero, an auxiliary presentation and the
Golod--Shafarevich inequality give
$\log\dim C_n\geq(1-o(1))n^5(\log n)^2$ as $n\to\infty$.
The same inequality shows that quasisymmetric and free quasisymmetric
coinvariant dimensions grow faster than every fixed power of $n!$,
over every field.
{For all three families on several freely noncommuting alphabets,
an exact tensor decomposition determines all multigraded Hilbert series
and, for symmetric coinvariants in characteristic zero, Frobenius
characteristics from one-alphabet data. In characteristic zero,
projection gives stable Schur multiplicities for compatible quotients
in mixed commuting, anticommuting, and free alphabets. The first
vanishing degree of the one-alphabet quotient controls both their
degree bounds and a uniform sufficient stable range.}
\end{abstract}
\maketitle

\section{Introduction}

Coinvariant algebras connect invariant theory, representation theory,
and geometry through explicit polynomial relations. In characteristic zero, the classical
coinvariant ring of the symmetric group is a graded model of its
regular representation and a presentation of the cohomology ring of
the complete flag variety~\cite{Borel53,Chevalley55}. Replacing its
symmetric relations by quasisymmetric polynomials gives a quotient
of Catalan dimension~\cite{AB03,ABB04}, now realized as the cohomology
ring of a toric complex in the flag variety~\cite{BGNST26}.

We study these quotient constructions in a free associative
algebra, where the relations must control both the multiplicities
and the order of letters. The first question is whether these quotients are finite dimensional. 
We answer it for symmetric, quasisymmetric, and free quasisymmetric relations.
Our main contributions are a general nilpotence criterion based on
restriction to subalphabets and quantitative lower bounds for the
growth of these coinvariant algebras.
{We also obtain an exact factorization for multiple free alphabets
and a uniform stability argument for their mixed-alphabet quotients.}
More precisely, let $\kk$ be a field and let $R_n=\kk\inv{x_1,\ldots,x_n}$
be graded by word length. Set
\begin{equation}\label{eq:coinvariants}
\begin{alignedat}{3}
&C_n=R_n/I_n,  &&T_n=R_n/J_n,  &&{\SC_n=R_n/\SI_n,}\\
&I_n=\inv{\NCSym_n^+},\qquad &&J_n=\inv{\NCQSym_n^+}, \qquad&&\SI_n=\inv{\FQSym_n^+}.
 \end{alignedat}
\end{equation}
All ideals in this paper are homogeneous and two-sided, and a superscript $+$ denotes
the positive-degree part. Here $\NCSym_n=R_n^{S_n}$
for the action of the symmetric group $S_n$ permuting the variables, and $\NCQSym_n$ is the
specialization to $n$ letters of the algebra of quasisymmetric
functions in noncommuting variables, also {denoted}
$\operatorname{WQSym}$, see~\cite{BZ09,H99}.
In degree $d$, the monomial bases of $\NCSym_n$ and $\NCQSym_n$
are indexed by set partitions
and ordered set partitions of $[d]$, respectively, with at most $n$
blocks. The inclusion
$\NCSym_n\subseteq\NCQSym_n$ gives a surjection
$C_n\twoheadrightarrow T_n$.
The algebra of free quasisymmetric functions, $\FQSym$,
is a realization of the
Malvenuto--Reutenauer Hopf algebra of permutations~\cite{MR95,DHT02}, and we denote by $\FQSym_n$
its specialization to $n$ letters. Both $\NCSym_n$ and $\FQSym_n$
lie in $\NCQSym_n$, but neither contains the other for $n\geq2$.

The algebras $C_n$ and $T_n$  and
their degreewise dual inverse systems were studied in~\cite{ABL10}\footnote{The
arXiv version stated quasisymmetric finiteness as Theorem~5.1, based
on a proof announced by Aval and Bergeron. That proof did not succeed
and was withdrawn; the published version therefore states the result
as Conjecture~5.1.}.
The two-sided ideal convention retains multiplication in the quotient. It is important to note that
the left-ideal coinvariants of~\cite{BRRZ08} are different objects and in that case
are infinite dimensional for $n\geq2$.

We first record the finiteness statement for $C_n$. This case
follows easily from Bergman--Isaacs~\cite{BI73}, and we add a modular obstruction.

\begin{mainproposition}\label{prop:A}
	For every $n\geq1$, the symmetric coinvariant algebra $C_n$ is finite
	dimensional if and only if $\operatorname{char}(\kk)\nmid n!$. 
\end{mainproposition}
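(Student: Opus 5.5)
The plan is to prove the two directions separately: sufficiency from the Bergman--Isaacs theorem~\cite{BI73}, and necessity from an explicit homomorphism onto a polynomial ring.

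\emph{Sufficiency.} Assume $\operatorname{char}(\kk)\nmid n!$. The ideal $I_n$ is homogeneous and $S_n$-stable, so $S_n$ acts on $C_n$ by graded algebra automorphisms, and in particular on the nonunital algebra $C_n^+=R_n^+/I_n$. Bergman--Isaacs says the following: if a finite group $G$ acts on a (nonunital) algebra $A$ with no additive $|G|$-torsion, and the fixed subalgebra $A^G$ is nilpotent, then $A$ is nilpotent. Here $|G|=n!$ is invertible in $\kk$, so there is no torsion. To compute the invariants, I use the Reynolds operator $\rho=\frac1{n!}\sum_{\sigma\in S_n}\sigma$. It is surjective from $R_n^+$ onto $(R_n^+)^{S_n}$ and commutes with the quotient map, which gives
\[
(C_n^+)^{S_n}=\rho(C_n^+)=\text{image of }(R_n^+)^{S_n}=\NCSym_n^+\text{ in }C_n,
\]
and this image is $0$ because $\NCSym_n^+\subseteq I_n$. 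The zero algebra is nilpotent, so Bergman--Isaacs gives $(C_n^+)^N=0$ for some $N$. Since $C_n$ is generated by the $n$ degree-one elements $x_i$, every word of length $\geq N$ vanishes in $C_n$. Hence $\dim C_n\leq\sum_{d<N}n^d<\infty$.

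\emph{Necessity.} Assume $p=\operatorname{char}(\kk)$ divides $n!$, so $p\leq n$. Define the algebra homomorphism $\varphi\colon R_n\to\kk[t]$ by $x_1,\ldots,x_p\mapsto t$ and $x_{p+1},\ldots,x_n\mapsto 0$. Take a monomial basis element $m_\pi$ of $\NCSym_n$ in degree $d$, indexed by a set partition $\pi$ of $[d]$ with $k\geq1$ blocks. It is the sum of the words $x_{f(1)}\cdots x_{f(d)}$ over all maps $f$ that are constant exactly on the blocks of $\pi$, that is, over injective assignments of letters to blocks. Under $\varphi$, such a word survives only if all its letters lie in $\{x_1,\ldots,x_p\}$, and then it maps to $t^d$. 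Therefore
\[
\varphi(m_\pi)=p(p-1)\cdots(p-k+1)\,t^d,
\]
and this coefficient is divisible by $p$ because $k\geq1$, so it vanishes in $\kk$. Thus $\varphi(I_n)=0$, and $\varphi$ factors through a surjection $C_n\twoheadrightarrow\kk[t]$. This forces $\dim C_n=\infty$.

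The only delicate point is in the first direction: one must check the exact hypotheses of Bergman--Isaacs, namely that it applies to nonunital algebras and needs only invertibility of $|G|$ on $A$. One must also justify the identification $(R_n^+/I_n)^{S_n}=(R_n^+)^{S_n}/I_n^{S_n}$, and this uses invertibility of $n!$ through the Reynolds operator. The modular direction is a direct computation once the specialization $\varphi$ is chosen.
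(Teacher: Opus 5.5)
Your proof is correct and follows the paper's argument: Reynolds averaging shows $(C_n)_+^{S_n}=0$, so Bergman--Isaacs makes $(C_n)_+$ nilpotent, and the specialization $x_i\mapsto t$ for $i\leq p$ and $x_i\mapsto 0$ otherwise kills every $m_\pi$ through the falling factorial $p(p-1)\cdots(p-k+1)$, giving $C_n\twoheadrightarrow\kk[t]$. Your single falling-factorial formula also covers the case of more than $p$ blocks, which the paper treats separately.
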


When
$\operatorname{char}(\kk)\nmid n!$, restriction to the alternating group improves the resulting
degree estimate for $n\geq3$ (Proposition~\ref{prop:alternating}).
The characteristic criterion extends to every faithful finite-dimensional
representation of a finite group $G$: its tensor-algebra coinvariants
are finite dimensional precisely when $|G|$ is invertible in $\kk$
(Remark~\ref{rem:general-coinvariants}).

In characteristic zero, abelianization maps these algebras
onto their commutative counterparts, but the noncommutative quotients
can be much larger. Already $C_4$ has dimension $946$, compared with
$24$ in commuting variables.  It is then natural to ask how fast these quotients grow. For this, we use character theory and the
Golod--Shafarevich inequality~\cite[Theorem 2.1]{Ershov12} and obtain the following theorem.
For a connected graded algebra $A$ generated in degree one, write
$\nu(A)=\min\{d\geq1:A_d=0\}$ for its first vanishing degree,
with $\nu(A)=\infty$ if no such degree exists.
\begin{maintheorem}\label{Thm:GS}
Suppose $\operatorname{char}(\kk)=0$. As $n\to\infty$,
\[
 \nu(C_n)\geq(1-o(1))n^5\log n,\qquad
 \log\dim C_n\geq(1-o(1))n^5(\log n)^2.
\]
\end{maintheorem}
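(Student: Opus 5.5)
We bound $C_n$ from below using a smaller ideal. Since $I_n$ is generated by $\NCSym_n^+$, and $\NCSym_n$ is generated as an algebra by its elements of degree at most $n$ (in characteristic zero, by a Noether-type bound for $R_n^{S_n}$), we expect $I_n$ to be generated as a two-sided ideal by the homogeneous invariants of degrees $1,\dots,n$. The key count is $r_d=\dim(\NCSym_n)_d$, the number of set partitions of $[d]$ with at most $n$ blocks. By character theory, $r_d=\frac1{n!}\sum_{\sigma\in S_n}\operatorname{fix}(\sigma)^d$. We will use the coarse bound $r_d\le S(d,\le n)\le n^d/n!\cdot(1+o(1))$ for $d\gg n$, and in general $r_d\le \min(B_d, n^d/n!+\ldots)$. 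We do not need minimal generators; any generating set suffices for an upper bound on the relation series.

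Next we apply Golod--Shafarevich~\cite[Theorem 2.1]{Ershov12} to $C_n=R_n/I_n$. With relation series $H_I(t)=\sum_{d\le D} r_d t^d$ for the chosen generators, every coefficient of $(1-nt+H_I(t))^{-1}$ is a lower bound for $\dim (C_n)_k$, as long as all its coefficients up to degree $k$ are nonnegative. The relations occur in low degree, where $r_d\le B_d$ is tiny compared with $n^d$. So we choose $t=t_0$ slightly above $1/n$, say $t_0=\frac1n(1+\epsilon)$, and require $1-nt_0+\sum_d r_d t_0^d<0$, or at least positivity of the truncated expansion. The first-vanishing-degree bound comes from the standard argument: if $\nu(C_n)=N$, then $H_{C_n}(t)$ is a polynomial of degree $<N$ and $H_{C_n}(t)(1-nt+H_I(t))\ge 1$ coefficientwise up to degree $N$. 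Evaluating at a suitable $t\in(1/n,1)$ gives $\sum_{k<N}\dim(C_n)_k t^k\cdot(1-nt+H_I(t))\ge$ a positive quantity. This forces $N$ to be large whenever $\min_t(1-nt+H_I(t))$ is negative only by a tiny amount, or, better, when the series $(1-nt+H_I)^{-1}$ stays positive for a long range.

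The exponents $n^5\log n$ and $n^5(\log n)^2$ suggest that the crude generator bound is the wrong input. Instead we should pass to an auxiliary presentation: a subalgebra or quotient of $C_n$, for instance obtained by restricting to an alphabet of size $m\approx n/2$ or by setting variables equal. The induced relations should then become few relations in degrees growing polynomially, with $n^d$ growth of free words against $\sim n^{d}/n!$ relations. The order of magnitude $n^5$ should come from balancing $\log(n!)\approx n\log n$ against the degree range of the relations, roughly $n^2$ to $n^3$. Concretely, we would find a map $C_n\twoheadrightarrow A$ with $A=\kk\langle y_1,\dots,y_m\rangle/(\text{relations with } r'_d)$, where $\sum_d r'_d t^d$ at $t=1/m(1+\delta)$ is $O(\delta^2)$ small. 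The Golod--Shafarevich positivity then persists to degree $\sim 1/\delta\cdot\log(\cdot)$, and the dimension is at least the product of the growth $(1+\delta)^k$ over that range. This gives $\log\dim\ge \nu\cdot\log(\text{growth})$, matching $n^5(\log n)^2=n^5\log n\cdot\log n$.

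The main obstacle is designing the auxiliary presentation and bounding its relation series precisely enough, via the character formula for fixed points, to extract the exact constant $1-o(1)$. We would first try to count $S_n$-orbits of words with all letters occurring, using $r_d\approx \binom{?}{}$ estimates from the fixed-point sum, and optimize $\delta$ as a function of $n$ at the end.
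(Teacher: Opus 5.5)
There is a genuine gap. You pick the right tool, Golod--Shafarevich applied to a presentation of $C_n$, and you rightly suspect an auxiliary presentation is needed. But you never construct one, and the inputs you do specify either fail or give a much smaller exponent.

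Your opening premise is false. Noether's degree bound fails for free algebras, and $\NCSym_n$ is not finitely generated for $n\geq2$. Nor can the invariants of degree at most $n$ generate $I_n$. For large $n$ the polynomial $1-nt+\sum_{d\leq n}B_dt^d$ is negative at $t=2/n$, so Golod--Shafarevich would make $C_n$ infinite dimensional, contradicting Corollary~\ref{thm:finite}.

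Taking all of $\NCSym_n^+$ as relations is legitimate, with $r_d=\frac1{n!}\sum_\sigma\operatorname{fix}(\sigma)^d$ as you note. But the identity part $\frac{1}{n!(1-nt)}$ of that relation series has radius $1/n$, while $r_1=1,r_2=2$ put the root of the denominator near $1/(n-1)$. Any admissible $t_\ast$ then has $nt_\ast\gtrsim1+1/n$, which limits the truncation degree to order $n\log(n!)\approx n^2\log n$.

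Your concrete candidates for an auxiliary algebra do not help. Restriction to a subalphabet $S$ maps $I_n$ onto $I_S$, so it just returns $C_{|S|}$. Identifying variables gives a quotient whose relations you do not control.

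The heuristics are also off. The exponent does not come from relations in degrees $n^2$ to $n^3$. A growth factor $(1+\delta)^k$ would give only $\log\dim C_n\approx\delta D\approx\log(n!)$; the per-degree growth must be $1/t_\ast\sim n$. Finally, evaluating $\Hilb(C_n;t)F(t)\geq1$ at one point does not bound $N$. With all relations the series diverges for $t>1/n$, and after truncation the coefficientwise inequality is lost beyond the truncation degree.

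The missing idea is to strip off the power sums before counting invariants. Eliminate $p_1$, set $V_n=(R_n)_1/\kk p_1$, and form $B_n=T(V_n)/\inv{p_2,\ldots,p_{n-1}}$. In Fourier coordinates $p_k$ has leading word $y_1^{k-1}y_{n-k+1}$, and these words have no overlaps. Hence, with $E_n$ spanned by $p_2,\ldots,p_{n-1}$, the sequence $0\to B_n\otimes E_n\to B_n\otimes V_n\to B_n\to\kk\to0$ is an exact $S_n$-equivariant resolution. Writing $s_n(t)=t^2+\cdots+t^{n-1}$, the Hopf trace formula gives
\[
 \Hilb(B_n^{S_n};t)=\mathcal H_n(t)=\frac1{n!}\sum_{\sigma\in S_n}\frac1{1-(\operatorname{fix}(\sigma)-1)t+s_n(t)}.
\]
Lifting a basis of $(B_n^{S_n})_+$ then presents $C_n$ on $n-1$ generators with relation series $\mathcal H_n(t)-1+s_n(t)$, far smaller than $\dim(\NCSym_n)_d$.

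Let $\rho\sim1/n$ be the least root of $P(t)=1-(n-1)t+s_n(t)$. Near $\rho$ the nonidentity permutations contribute $1+t^4+O(t^5)$, because the first four moments of $\operatorname{fix}(\sigma)-1$ are $0,1,1,4$. The identity contributes $1/(n!\,P(t))$, whose singularity now sits at $\rho$ itself. So one can take $t_\ast=\rho(1+\delta)$ with $\delta$ just above $\rho^4\sim n^{-4}$ and keep the truncated denominator negative through degree $D\approx\log(n!)/\delta\sim n^5\log n$; this is where $n^5$ comes from.

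Negativity at $t_\ast$ gives $\ell_{n,d}/\ell_{n,d-1}>t_\ast^{-1}$ for the reciprocal coefficients. Hence $\dim(C_n)_d\geq\ell_{n,d}\geq t_\ast^{-d}$ for $d\leq D$, which yields both $\nu(C_n)\geq D+1$ and $\log\dim C_n\geq D\log(1/t_\ast)\sim n^5(\log n)^2$.
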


For quasisymmetric coinvariants, the surjection from $C_n$
already gives finiteness when $n!$ is invertible.
Theorem~\ref{thm:B} gives a much stronger conclusion: over every
field, every monomial of degree at least $2^{n-1}$ vanishes in $T_n$.
In particular, this proves the finiteness conjecture
of~\cite{ABL10} over every field.
The new degree bound is exponential in $n$. Already at $n=5$,
our bound is $16$, compared with approximately
$6.16\times10^{3005}$ from the direct Bergman--Isaacs argument
when $5!$ is invertible.
Our criterion does not require
any group action.

\begin{maintheorem}\label{thm:B}
Let $X$ be an alphabet of size $n$, and let $L_S$ be homogeneous
two-sided ideals of $R_S=\kk\inv{x:x\in S}$ for all nonempty
$S\subseteq X$. Suppose every relation on $S$ lifts to a relation
on $Y$ whenever $S\subseteq Y\subseteq X$; explicitly,
\[
 L_S\subseteq\pi_{Y,S}(L_Y),
\]
where $\pi_{Y,S}$ sets letters outside $S$ equal to zero.
Let $K_S$ be the two-sided ideal of $R_S$ spanned by words on $S$
in which every letter of $S$ occurs.
If $K_S^{e_s}\subseteq L_S$ whenever $|S|=s$, for positive integers
$e_1,\ldots,e_n$, then
\[
 (R_X/L_X)_+^{\,b_n}=0,
 \qquad b_n=e_n\prod_{s=1}^{n-1}(1+e_s).
\]
This holds over every field.
\end{maintheorem}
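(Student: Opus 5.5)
The plan is to show that any product $w_1w_2\cdots w_{b_n}$ of positive-degree words in $R_X$ lies in $L_X$. This suffices because $(R_X/L_X)_+^{\,b_n}$ is spanned by the images of such products. The argument runs by induction along the filtration by word support. For $1\le k\le n$ set
\[
 M_k=\prod_{s=1}^{k-1}(1+e_s),\qquad\text{so } M_1=1 \text{ and } b_n=e_nM_n .
\]
Let $F_k\subseteq R_X$ be the span of the words whose support has at least $k$ letters. Since $F_k$ is a two-sided ideal, $F_k+L_X$ is a two-sided ideal as well. The key lemma is the following.

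\emph{Lemma.} Any product of $M_k$ positive-degree words lies in $F_k+L_X$.

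For $k=1$ the lemma is trivial. Before the inductive step, I record what the lifting hypothesis gives. Suppose $u\in L_S$ with $S\subsetneq X$. Choose $v\in L_X$ with $\pi_{Y,S}(v)=u$ for $Y=X$. Every word occurring in $v-u$ contains some letter outside $S$. Thus $u\equiv -(v-u)\pmod{L_X}$, and the right-hand side is a combination of words each containing a letter of $X\setminus S$. Such words need not contain all of $S$, so their support can be smaller than $|S|$. This loss of support is the main obstacle, and it explains the factor $1+e_k$ rather than $e_k$.

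For the inductive step $k\to k+1$ with $k<n$, split a product of $M_{k+1}=(1+e_k)M_k$ words into $1+e_k$ consecutive blocks of $M_k$ words. By induction and two-sidedness, the product lies modulo $L_X$ in the span of products $u_1u_2\cdots u_{1+e_k}$, where each $u_i$ is a word with support of size at least $k$.

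Consider one such product. If its total support has more than $k$ letters, it lies in $F_{k+1}$. Otherwise its support is a single set $S$ with $|S|=k$. Each $u_i$ has support of size at least $k$ contained in $S$, so each $u_i$ has support exactly $S$, that is, $u_i\in K_S$. Hence $u_1\cdots u_{e_k}\in K_S^{e_k}\subseteq L_S$. By the lifting observation, this is congruent modulo $L_X$ to a combination of words $z$, each containing some letter $y\notin S$. Multiplying on the right by $u_{1+e_k}$ and using that $L_X$ is two-sided, the product becomes congruent to a combination of words $z\,u_{1+e_k}$. Each of these has support containing $S\cup\{y\}$, which has $k+1$ letters. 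So the product lies in $F_{k+1}+L_X$, which completes the induction.

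To finish, apply the lemma with $k=n$: any product of $M_n$ words lies in $F_n+L_X=K_X+L_X$. Split $w_1\cdots w_{b_n}$ into $e_n$ blocks of $M_n$ words. Each block lies in $K_X+L_X$, so, since $L_X$ is an ideal, the product lies in $K_X^{e_n}+L_X$. By hypothesis $K_X^{e_n}\subseteq L_X$, so the product lies in $L_X$.

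No step uses division or any group action, so the argument is valid over every field. The only delicate point is the support bookkeeping in the lifting step, which the extra block $u_{1+e_k}$ handles.
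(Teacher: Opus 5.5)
Your proof is correct, and it reaches the bound through a different decomposition than the paper's.

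\textbf{The paper's route.} The paper proves Lemma~\ref{lem:filtration}, $F_sF_1^{b_s}\subseteq F_{s+1}$ in $R_X/L_X$, where $b_s$ is a vanishing degree for the quotients $R_S/L_S$ with $|S|=s$. These $b_s$ come from an outer induction on alphabet size, that is, from applying the theorem to each proper subalphabet. The result is the additive recurrence $b_n=e_n(1+\sum_{s<n}b_s)$ of~\eqref{eq:support-recurrence}.

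\textbf{Your route.} You prove the multiplicative step $K_{X,k}^{\,1+e_k}\subseteq K_{X,k+1}+L_X$ directly from $K_S^{e_k}\subseteq L_S$. One full-support factor supplies the context for the lifted relation formed by the other $e_k$ factors. The paper uses exactly this step only at the end of the proof of Proposition~\ref{prop:seven-bound}. The two routes give the same number, because $1+\sum_{s<k}b_s=\prod_{s<k}(1+e_s)$ when $b_s=e_s\prod_{t<s}(1+e_t)$.

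\textbf{What each buys.} Your argument is self-contained. It never invokes the theorem on subalphabets and uses only the lifts $L_S\subseteq\pi_{X,S}(L_X)$.

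The paper's additive form accepts any sharper vanishing degree known for a subalphabet quotient. This is how Remark~\ref{rem:ncqsym-improvement} turns $A_+^8\subseteq F_5$, $(T_5)_6=0$ and $(T_6)_9=0$ into $A_+^{14}\subseteq F_6$ and $A_+^{23}\subseteq F_7$ for $A=T_n$. Your block step alone would give only $A_+^{16}\subseteq F_6$ and $A_+^{32}\subseteq F_7$ there. Corollary~\ref{cor:r-char-zero} uses the additive form the same way in characteristic $2$, inserting $b_3=2r+2$ to obtain $b_4=4r(r+1)$.
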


For quasisymmetric relations, every full-support word is
invariant, so taking $e_s=1$ gives the bound $2^{n-1}$ for $T_n$ (Corollary~\ref{thm:TL-bound}).
The criterion also treats the quotients $T_n^{(r)}$ defined by the
$r$-quasisymmetric functions of~\cite{ALW24}, which interpolate
between quasisymmetric and symmetric functions in noncommuting
variables. A product of $r$ full-support words is invariant, so
the criterion applies over every field. Combining it with the
two- and three-variable estimates gives
\[
 \nu(T_n^{(r)})\leq4r(r+1)^{n-3},
\]
for $n\geq3,\ r\geq1$ (Corollary~\ref{cor:r-char-zero}). For fixed $r$,
this is again exponential in $n$. Already for $r=2$ and $n=5$,
the bound is $72$, compared with the refined inherited bound of
approximately $6.16\times10^{3005}$ (or $1.01\times10^{1460}$ from Proposition~\ref{prop:alternating})
when $5!$ is invertible.
For $T_n$, two-sided lifting and
exact local calculations give further improvements
(Proposition~\ref{prop:seven-bound} and Remark~\ref{rem:ncqsym-improvement}).

For free quasisymmetric relations on $n\geq3$ variables, the ideal
defining $\SC_n$ is not $S_n$-stable and
does not contain the ideal generated by positive-degree invariants
of any subgroup of $S_n$. Thus the Bergman--Isaacs argument used
for $C_n$ does not apply here, even by passage to a quotient of
coinvariants for a subgroup of the natural permutation group.
A product of two full-support words
is, however, a free quasisymmetric relation, so Theorem~\ref{thm:B} with { $e_1=e_2=1$ and $e_s=2$, for $s\geq 3$,}
gives finiteness over every field, with
\[\nu(\SC_n)\leq8\cdot3^{n-3},\] 
for $n\geq3$
(Corollary~\ref{thm:fqsym-bound}).

A calculation of lower bounds for $A_n=T_n$ or
$\SC_n$, using an argument similar to the proof of Theorem~\ref{Thm:GS},
also reveals substantial growth in these finite-dimensional quotients.
Over every field, as $n\to\infty$, we show
\[
 \nu(A_n)\geq(1-o(1))n\log n,\qquad
 \log\dim A_n\geq(1-o(1))n(\log n)^2
\]
(Theorem~\ref{thm:ordered-growth}). Thus no linear degree bound is
possible, and both dimensions eventually exceed every fixed power
of $n!$.

We next consider how these algebras change when we add alphabets.
In commuting
variables, Haiman's work connects diagonal coinvariants to Hilbert
schemes of points in the plane and determines their dimension and
bigraded character~\cite{Haiman02}. For freely noncommuting
variables, the ordered sequence of alphabet labels in a word separates
from its variable indices. This separation survives passage to
coinvariants and gives an exact tensor decomposition.

{Let $C_{n,m}$, $T_{n,m}$, and $\SC_{n,m}$ denote the quotients
defined by the corresponding colored relations on $m$ freely
noncommuting alphabets of size $n$ (Section~\ref{sec:diagonal}).}
Write $\Hilb(D;t)$ for the Hilbert
series of a graded space $D$, and in characteristic zero put
\[
 \mathcal F_n(\mathbf z;t)
 =\sum_{d\geq0}\operatorname{Frob}_{S_n}((C_n)_d;\mathbf z)t^d,
\]
where $\operatorname{Frob}_{S_n}$ is the Frobenius characteristic
and $\mathbf z$ denotes its symmetric-function variables.

\begin{maintheorem}\label{thm:C}
{Let $D=C,T$, or $\SC$. Over every field $\kk$, for all
$n,m\geq1$ and $d\geq0$,
\[
 (D_{n,m})_d\cong(D_n)_d\otimes(\kk^m)^{\otimes d}.
\]
The isomorphism is $\mathrm{GL}_m$-equivariant for linear substitutions
among alphabets. For $D=C$, it is also $S_n$-equivariant, with trivial
$S_n$-action on $\kk^m$.}
The direct sums over $d$ are compatible with multiplication.
{For all three families,}
\[
 \Hilb(D_{n,m};q_1,\ldots,q_m)
 =\Hilb(D_n;q_1+\cdots+q_m).
\]
In characteristic zero,
\[
 \operatorname{Frob}_{S_n}(C_{n,m};\mathbf z;q_1,\ldots,q_m)
 =\mathcal F_n(\mathbf z;q_1+\cdots+q_m).
\]
Here $q_a$ records degree in the $a$th alphabet.
\end{maintheorem}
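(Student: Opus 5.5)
The plan is to make the separation of alphabet labels from variable indices precise at the level of the free algebra, and then check that the colored ideal is exactly the tensor product of the one-alphabet ideal with the color space. Write the variables of the $m$ alphabets as $x^{(a)}_i$ with $a\in[m]$ and $i\in[n]$, and let $R_{n,m}$ be the free algebra on them. A word of length $d$ in $R_{n,m}$ is the same thing as a pair $(w,c)$ with $w\in[n]^d$ an index word and $c\in[m]^d$ a color word. This gives a linear isomorphism
\[
 \Phi_d\colon (R_{n,m})_d\xrightarrow{\ \sim\ }(R_n)_d\otimes(\kk^m)^{\otimes d},\qquad x^{(c_1)}_{w_1}\cdots x^{(c_d)}_{w_d}\mapsto x_{w_1}\cdots x_{w_d}\otimes e_{c_1}\otimes\cdots\otimes e_{c_d}.
\]
Concatenation of words concatenates both the index words and the color words. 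Hence $\bigoplus_d\Phi_d$ is an algebra isomorphism onto $\bigoplus_d (R_n)_d\otimes(\kk^m)^{\otimes d}$ with the componentwise product. Linear substitutions among alphabets act only on the second factor, through the diagonal $\mathrm{GL}_m$-action. Permutations of the indices act only on the first factor.

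Next I identify the relation spaces. For $D=C$ the relations are the $S_n$-invariants of $R_{n,m}$. Since $S_n$ acts only on the first tensor factor, and $(R_n)_d$ is a permutation module, the invariants are exactly $(\NCSym_n)_d\otimes(\kk^m)^{\otimes d}$. This holds over every field: the orbit sums of words $(w,c)$ are orbit sums of $w$ tensored with $e_c$. For $D=T,\SC$, the colored relations of Section~\ref{sec:diagonal} are, I expect, the colorings $f\otimes e_c$ of the monomial (respectively fundamental) basis elements $f$, one for each color word $c$. So in each degree the relation space is $V_d\otimes(\kk^m)^{\otimes d}$, where $V_d$ is the degree-$d$ relation space on one alphabet. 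If the paper instead defines them by polarization, I would first show that their span is $\mathrm{GL}_m$-stable and contains every $f\otimes e_c$, which gives the same space.

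The main step is the ideal itself. The degree-$d$ part of the two-sided ideal is spanned by products $u\,g\,v$ with $u,v$ words and $g$ a relation. Under $\Phi$ such a product is $(u'fv')\otimes(e_{c_u}\otimes e_c\otimes e_{c_v})$. These products therefore span exactly
\[
 (L_n)_d\otimes(\kk^m)^{\otimes d},
\]
where $L_n\in\{I_n,J_n,\SI_n\}$ is the corresponding one-alphabet ideal. The inclusion $\supseteq$ holds because every color word $c\in[m]^d$ factors as $c_u\,c\,c_v$ at any positions one chooses. This is the step I expect to need the most care, specifically in matching the paper's exact definition of colored relations for $T$ and $\SC$; once the relation spaces are tensors, the rest is formal. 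Taking quotients then gives
\[
 (D_{n,m})_d\cong(D_n)_d\otimes(\kk^m)^{\otimes d}.
\]
This isomorphism is $\mathrm{GL}_m$-equivariant, and for $D=C$ it is $S_n$-equivariant, because both ideals are stable under the respective actions and $\Phi$ intertwines them. It is compatible with multiplication because $\Phi$ is an algebra map.

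Finally I read off the series. The multidegree of $(w,c)$ depends only on $c$, and $\sum_{c\in[m]^d}q_{c_1}\cdots q_{c_d}=(q_1+\cdots+q_m)^d$. Hence
\[
 \Hilb(D_{n,m};q_1,\ldots,q_m)=\sum_d\dim(D_n)_d\,(q_1+\cdots+q_m)^d=\Hilb(D_n;q_1+\cdots+q_m).
\]
In characteristic zero the same multigraded decomposition is $S_n$-equivariant, with $S_n$ acting trivially on the color factor. Taking Frobenius characteristics degree by degree therefore yields $\mathcal F_n(\mathbf z;q_1+\cdots+q_m)$.
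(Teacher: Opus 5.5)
Your proposal is correct and follows essentially the same route as the paper: the coloring maps identify $(R_{n,m})_d$ with $(R_n)_d\otimes(\kk^m)^{\otimes d}$, and the relation spaces and generated ideals then decompose as $(L_n)_d\otimes(\kk^m)^{\otimes d}$ by splitting color words along products $ufv$, so the quotient, equivariance, multiplicativity and series identities follow formally. Your hedge about the colored relations is settled by the paper's definitions: $\SI_{n,m}$ is generated by the colorings $c_{\mathbf a}(F_\sigma(X_n))$ by definition, and $J_{n,m}$ is defined by invariants of a diagonal quasi-symmetrizing action that depends only on the index word, so your orbit-sum argument for $C$ applies verbatim.
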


Whenever $D_n$ is finite dimensional, its highest nonzero degree is
independent of $m$ and $\dim D_{n,m}=\Hilb(D_n;m)$.
Thus the dimension and character problems for arbitrarily many
alphabets reduce to the one-alphabet quotients.

{In characteristic zero, these first vanishing degrees also
control representation stability under addition of alphabets.
Corollary~\ref{cor:mixed-stability} gives Schur multiplicities independent
of the numbers of commuting, anticommuting, and free alphabets for
quotients obtained by projections compatible with alphabet substitutions.
The total degree is at most $\nu(D_n)-1$, and every Schur constituent
is visible once each alphabet space present has dimension at least
$\nu(D_n)-1$. In the symmetric case, this also includes the
$S_n$-character. Earlier results establish stability for commuting and
bosonic--fermionic symmetric coinvariants~\cite{Ber13,Ber20BF,Lentfer25}.
Our argument treats $C,T$, and $\SC$ uniformly, including mixtures
containing free alphabets.}

Section~\ref{sec:finite} fixes conventions and proves
Proposition~\ref{prop:A} and the alternating-group bound. Section~\ref{sec:bounds}
proves Theorem~\ref{thm:B} and applies the restriction criterion to quasisymmetric
and free quasisymmetric coinvariants. Section~\ref{sec:r-ncqsym}
treats $r$-quasisymmetric coinvariants, and Section~\ref{sec:sharp}
contains the small-rank computations and proves the uniform growth estimates
for all three families, including Theorem~\ref{Thm:GS}.
Section~\ref{sec:diagonal} proves
Theorem~\ref{thm:C} {and derives mixed-alphabet stability},
and Section~\ref{sec:remaining} formulates
further questions.

\medskip
\noindent
{\bf Acknowledgement.} We thank F. Bergeron for conversations on the multialphabet cases. The mathematical ideas were developed primarily by the authors, with assistance from OpenAI Codex, which also prepared an initial draft. The authors critically reviewed and revised the manuscript and take responsibility for its contents.

\section{Finite dimensionality}\label{sec:finite}

\subsection{Words and invariants}
Write $[n]=\{1,\ldots,n\}$ and give every variable degree $1$.
Let \(X\) be an alphabet. Denote by \(X^*\) the \emph{free monoid generated by} $X$, whose elements are all finite words over $X$, including the \emph{empty word} \(1\). For a word \(w=x_{i_1}x_{i_2}\cdots x_{i_d}\in X_n^*\), where $X_n=\{x_1,x_2,\ldots,x_n\}$, its \emph{support} \(\operatorname{supp}(w)\) is defined by \(\operatorname{supp}(w)=\{x_{i_1},x_{i_2},\ldots,x_{i_d}\},\) the set of letters appearing in $w$.
For a graded algebra $B$, write $B_+=\bigoplus_{d\geq1}B_d$ and
$\Hilb(B;t)=\sum_{d\geq0}\dim_\kk(B_d)t^d$ when its homogeneous
components are finite dimensional. If $B$ is connected and generated
in degree one, define
\[
 \nu(B)=\min\{d\geq1:B_d=0\},
\]
with $\nu(B)=\infty$ if no such degree exists. Then
$B_+^d=\bigoplus_{j\geq d}B_j$, so a finite $\nu(B)$ is the
nilpotence index of $B_+$ and the highest nonzero degree is
$\nu(B)-1$.

The symmetric group {$S_n$} acts on $R_n$ by permuting the variables:
\[
 \sigma\cdot(x_{i_1}\cdots x_{i_d})
   =x_{\sigma(i_1)}\cdots x_{\sigma(i_d)},
   \]
for $\sigma\in S_n$.
This is an action by graded algebra automorphisms, and
$\NCSym_n=R_n^{S_n}$. Its monomial basis of degree $d$ is indexed by set partitions
of $[d]$ with at most $n$ blocks: the basis element $m_\Pi$ is the sum
of words whose equal-letter positions are precisely the blocks of the set partition $\Pi$.
We use the conventions of~\cite{RS06,BRRZ08}.

For an ordered set partition $\mathcal A=(A_1,\ldots,A_r)$ of $[d]$,
define
\begin{equation}\label{eq:MA}
 M_{\mathcal A}(X_n)=
 \sum_{\substack{1\leq j_1<\cdots<j_r\leq n\\ i_p=j_q\ \text{if }p\in A_q}}x_{i_1}\cdots x_{i_d}.
 \qquad 
\end{equation}
The sum is zero if $r>n$. These elements span $\NCQSym_n$; see
\cite{BZ09,ABL10}. For example,
\[
 M_{(\{1,3\},\{2\})}(X_3)
   =x_1x_2x_1+x_1x_3x_1+x_2x_3x_2.
\]
Summing $M_{\mathcal A}$ over all orders of the blocks of the set partition $\Pi=\{A_1,\ldots,A_r\}$ gives
$m_\Pi$. Consequently,
\begin{equation}\label{eq:surjection}
 \NCSym_n\subseteq\NCQSym_n,\qquad I_n\subseteq J_n,
 \qquad C_n\twoheadrightarrow T_n.
\end{equation}

\begin{remark}\label{rem:Hivert}
On words, Hivert's  quasi-symmetrizing action~\cite{H99} of a simple transposition
$s_i$ exchanges $x_i$ and $x_{i+1}$ when at most one of these letters
occurs, and fixes a word when both occur. Its invariant sums are
\eqref{eq:MA}. It is not multiplicative: it fixes $x_ix_{i+1}$,
whereas the product of the images of $x_i$ and $x_{i+1}$ is
$x_{i+1}x_i$. The terminology ``Temperley--Lieb coinvariants'' refers
to the quasi-symmetrizing framework of~\cite{Hivert00,ABB05}; it does
not assert that this action descends to an action by automorphisms
on the quotient.
\end{remark}

\subsection{The Bergman--Isaacs theorem}\label{ssec:thmA}
Set
\begin{equation}\label{eq:BI-function}
 f(m)=\prod_{j=1}^{m}\left(\binom{m}{j}+1\right).
\end{equation}
We use the following result of
\cite[Proposition 2.3]{BI73}. Algebras in this statement need not have
an identity.

\begin{proposition}[Bergman--Isaacs]\label{prop:BI}
Let a finite group $G$ act by automorphisms on a $\kk$-algebra $B$ without $|G|$-torsion.
If $B^G=0$, then
$B^{f(|G|)}=0$, with $f$ as in~\eqref{eq:BI-function}.
If $G$ is solvable, the stronger conclusion $B^{|G|}=0$ holds.
\end{proposition}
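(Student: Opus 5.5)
This is \cite[Proposition 2.3]{BI73}, and the paper uses it as a black box. If I had to reprove it, I would follow the Bergman--Isaacs strategy: work entirely with the trace map $\tr(b)=\sum_{g\in G}g(b)$, whose image lies in $B^G=0$. The hypothesis is then the identity $\sum_g g(b)=0$ for every $b\in B$. The aim is to turn this linear identity into a multiplicative one, namely vanishing of all products of a fixed length. Absence of $|G|$-torsion is used to cancel the factors $|G|$, or divisors of it, that appear when an element is recovered from a trace-type sum.

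\textbf{Solvable case ($B^{|G|}=0$).} Here I would argue by induction on $|G|$ along a normal series.
\begin{itemize}
\item For $G$ cyclic of prime order $p$, generated by $\sigma$, consider the operators $(1-\sigma)^k$. Since $\sum_{i<p}\sigma^i=0$ on $B$, the operator $\sigma$ satisfies $\sum_{i<p}\sigma^i=0$. Using that the cyclotomic relation is $(\sigma-1)^{p-1}$ up to a unit and a multiple of $p$, one gets a filtration $B=F_0\supseteq F_1\supseteq\cdots$ with $F_{p}=0$.
\item Multiplicativity of $\sigma$ gives $F_iF_j\subseteq F_{i+j}$, since $1-\sigma$ is a twisted derivation. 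Hence $B^p=0$.
\item For general solvable $G$, take a normal subgroup $H$ of prime index. The quotient $G/H$ acts on $B^H$ with no fixed points, so by the base case $(B^H)^{[G:H]}=0$.
\item One then needs the transfer-type statement that $B^H$ nilpotent of index $k$ forces $B^{k|H|}=0$. Iterating this gives $B^{|G|}=0$.
\end{itemize}

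\textbf{General case.} I would prove by induction over subsets of $G$ that suitable sums of products vanish. The shape of $f(m)=\prod_j(\binom mj+1)$ suggests the induction runs over the size $j$ of a set of group elements, with $\binom mj$ counting $j$-subsets of $G$.
\begin{itemize}
\item For $b_1,\ldots,b_r\in B$ and a $j$-subset $S\subseteq G$, consider the expressions obtained by expanding $\tr(b_1\,x\,b_2)=0$ and similar identities. Applying $\sum_g g(\cdot)$ to products and using $g(xy)=g(x)g(y)$ expresses a product involving $S$-twists in terms of terms with fewer distinct group elements.
\item A pigeonhole argument on a word of length $\binom mj+1$ forces a repeated pattern, and this lets one pass from level $j$ to level $j-1$.
\item At level $0$ the identity reads $|G|\cdot(\text{product})=0$, and torsion-freeness finishes.
\end{itemize}
Multiplying the lengths needed at each level yields the bound $B^{f(|G|)}=0$.

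The main obstacle is the general case: organizing the combinatorial descent so that each application of the trace identity strictly reduces the number of distinct group elements while the word length grows only by the factor $\binom mj+1$. I would follow \cite{BI73} closely at exactly that step rather than reinvent it. For the purposes of this paper the proposition is simply invoked.
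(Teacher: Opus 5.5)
Treating the statement as a citation of \cite{BI73} is exactly what the paper does: it uses Proposition~2.3 there for the general bound and Proposition~1.3 for the solvable one, and gives no argument of its own. Deferring the general $f(|G|)$ case to \cite{BI73} matches the paper.

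The self-contained sketch of the solvable case, however, fails at its base step. For a $\kk$-algebra, having no $|G|$-torsion means that $|G|$ is invertible in $\kk$ (or $B=0$). Suppose $\sigma$ has prime order $p$ and $\sum_{i<p}\sigma^i=0$ on $B$. Then $\Phi_p(x)=1+x+\cdots+x^{p-1}$ and $x-1$ are coprime in $\kk[x]$, because $\Phi_p(1)=p\neq0$, so $1-\sigma$ is invertible on $B$. Hence every image $(1-\sigma)^iB$ equals $B$ and every kernel $\ker(1-\sigma)^i$ is zero. No filtration with $F_0=B$ and $F_p=0$ can therefore be built from powers of $1-\sigma$.

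The congruence $\Phi_p\equiv(x-1)^{p-1}\pmod p$ and the twisted Leibniz rule describe a unipotent action in characteristic $p$, which is exactly what the hypothesis excludes. Moreover, the Leibniz rule gives $\ker(1-\sigma)^i\cdot\ker(1-\sigma)^j\subseteq\ker(1-\sigma)^{i+j-1}$, not $F_iF_j\subseteq F_{i+j}$ for images.

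The base case that works uses semisimplicity instead. After adjoining a primitive $p$th root of unity (nilpotence descends), write $B=\bigoplus_{a\in\mathbb Z/p}B_a$ with $B_aB_b\subseteq B_{a+b}$ and $B_0=B^{\sigma}=0$. Among the $p+1$ partial degree sums of a product of $p$ homogeneous factors, two coincide. So some consecutive subproduct lies in $B_0=0$, and the product vanishes.

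Your transfer statement also cannot be quoted in the generality you state it. With $H=G$ and $k=1$ it would give $B^{|G|}=0$ for every finite group, which is more than the proposition asserts in general. It is, however, provable for solvable $H$, and that is what closes your induction:
\begin{itemize}
\item If $|H|=p$ and $(B^H)^k=0$, consider the $kp+1$ partial degree sums of a product of $kp$ homogeneous factors. Some residue occurs $k+1$ times, which exhibits $k$ adjacent subproducts lying in $B_0=B^H$. Their product vanishes, so $B^{kp}=0$.
\item If $N$ is a normal subgroup of $H$ of prime index $p$, then $H/N$ acts on $B^N$ with fixed algebra $B^H$. The prime-order case gives $(B^N)^{kp}=0$, and induction applied to $N$ gives $B^{kp|N|}=B^{k|H|}=0$.
\end{itemize}
Taking $H=G$ and $k=1$ yields $B^{|G|}=0$ for solvable $G$.
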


The solvable assertion is~\cite[Proposition 1.3]{BI73}. The function
$f$ occurs in their Lemma~2.1. Later formulations and the problem of
improving this bound are discussed in~\cite[Section 2]{BG21}. We apply it to the symmetric coinvariant algebra.

\begin{corollary}\label{thm:finite}
If $\operatorname{char}(\kk)\nmid n!$, $(C_n)_+^{f(n!)}=0$.
In particular, $C_n$ and $T_n$ are finite dimensional.
\end{corollary}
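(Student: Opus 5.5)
We apply Proposition~\ref{prop:BI} with $G=S_n$ acting on the non-unital algebra $B=(C_n)_+$. The action of $S_n$ on $R_n$ by permuting variables is by graded algebra automorphisms. The ideal $I_n=\inv{\NCSym_n^+}$ is generated by $S_n$-invariant elements, so $\sigma(I_n)=I_n$ for every $\sigma$. Hence the action descends to $C_n$ by graded automorphisms and preserves $B=(C_n)_+$. Since $\operatorname{char}(\kk)\nmid n!=|G|$, multiplication by $|G|$ is invertible on the $\kk$-vector space $B$, so $B$ has no $|G|$-torsion.

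The key step is to check that $B^G=0$. Take $\bar b\in B$ fixed by $S_n$, and lift it to a homogeneous-component-wise element $b\in (R_n)_+$. We use the Reynolds operator $\rho=\frac1{n!}\sum_{\sigma\in S_n}\sigma$, which is available because $n!$ is invertible. The element $\rho(b)$ lies in $R_n^{S_n}\cap (R_n)_+=\NCSym_n^+\subseteq I_n$. Because $\bar b$ is invariant, $\sigma(b)-b\in I_n$ for every $\sigma$, so $\rho(b)-b\in I_n$. It follows that $b\in I_n$, that is, $\bar b=0$. This is the only place where invertibility is used beyond the torsion hypothesis, and it is the conceptual heart of the argument. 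Otherwise it is routine: the point is just that the invariants of a quotient are the image of the invariants when the Reynolds operator exists.

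Proposition~\ref{prop:BI} now gives $B^{f(n!)}=0$, which is exactly $(C_n)_+^{f(n!)}=0$. For finite dimensionality, note that $C_n$ is generated in degree one by the images of $x_1,\ldots,x_n$. Thus $(C_n)_d\subseteq B^d$ for $d\geq1$, so $(C_n)_d=0$ for $d\geq f(n!)$. Each graded piece is a quotient of $(R_n)_d$, which has dimension $n^d$. Therefore $\dim C_n\leq\sum_{d<f(n!)}n^d<\infty$. Finally, \eqref{eq:surjection} gives a surjection $C_n\twoheadrightarrow T_n$, so $T_n$ is finite dimensional as well.
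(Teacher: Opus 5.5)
Your proof is correct and follows the paper's argument essentially step by step. Both use the $S_n$-stability of $I_n$, the Reynolds-average argument showing $B^{S_n}=0$, Proposition~\ref{prop:BI}, and then degree-one generation together with the surjection $C_n\twoheadrightarrow T_n$ for finiteness; the only addition is that you check the no-$|G|$-torsion hypothesis explicitly, which the paper leaves implicit.
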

\begin{proof}
The ideal $I_n$ is homogeneous and $S_n$-stable, so $S_n$ acts by
automorphisms on $C_n$ and on its nonunital algebra $B=(C_n)_+$.
We claim that $B^{S_n}=0$. If $\overline{\mathbf r}\in B^{S_n}$, choose a
positive-degree representative $\mathbf r\in R_n$. Its Reynolds average
\[
 \mathcal R(\mathbf r)=\frac1{n!}\sum_{\sigma\in S_n}\sigma\cdot\mathbf r
\]
belongs to $\NCSym_n^+\subseteq I_n$, while its class is $\overline{\mathbf r}$.
Thus $\overline{\mathbf r}=0$. Proposition~\ref{prop:BI} gives $B^{f(n!)}=0$.

The algebra $C_n$ is generated in degree $1$, so $B^d$ is the sum of
its homogeneous components of degrees at least $d$. Hence every word
of degree at least $f(n!)$ lies in $I_n$, and the finitely many shorter
words span $C_n$. Finally, the surjection in~\eqref{eq:surjection}
proves that $T_n$ is finite dimensional as well.
\end{proof}

\begin{proof}[Proof of Proposition~\ref{prop:A}]
If $\operatorname{char}(\kk)\nmid n!$,
then $n!$ is invertible, and Corollary~\ref{thm:finite} applies.

Suppose now that $p=\operatorname{char}(\kk)$ divides $n!$,
so $0<p\leq n$.
Define a surjective algebra homomorphism $\varphi:R_n\to\kk[t]$ by
\[
 \varphi(x_i)=\begin{cases}
 {t,} & i\leq p,\\
 {0,} & i>p.
 \end{cases}
\]
For a set partition $\Pi=\{A_1,\ldots,A_r\}$ of $[d]$, with
$d>0$ and $r\leq n$, let $m_{\Pi}$ be the corresponding
monomial basis element of $\NCSym_n$. If $r>p$, then
$\varphi(m_{\Pi})=0$. If $1\leq r\leq p$, then
\[
 \varphi(m_{\Pi})=p(p-1)\cdots(p-r+1)t^d=0.
\]
Therefore, $\varphi(I_n)=0$, so $\varphi$ induces a surjection
$C_n\twoheadrightarrow\kk[t]$. In particular, $C_n$ is infinite
dimensional.
\end{proof}

Under the hypotheses of Corollary~\ref{thm:finite}, solvability of $S_n$
for $n\leq4$ improves the bound for $C_n$ to $n!$.
In characteristic zero, the exact first {vanishing} degrees
for $n\leq4$ are $1,2,4,10$; Section~\ref{sec:sharp} records the
Hilbert polynomials and a lower bound for $n=5$.

\begin{remark}[Finite-group coinvariants]\label{rem:general-coinvariants}
The argument is independent of the commutation rules for the variables.
Let $P$ be a finitely generated connected graded associative algebra
over a field $\kk$, and let a finite group $G$ act on $P$ by graded
algebra automorphisms. Assume that $|G|$ is invertible in $\kk$;
equivalently, $\operatorname{char}(\kk)\nmid|G|$. Then
\[
 D=P/\inv{(P^G)_+}
 \qquad\text{satisfies}\qquad D_+^{f(|G|)}=0,
\]
and is finite dimensional. Indeed, averaging gives $(D_+)^G=0$, so
Proposition~\ref{prop:BI} applies; products of fewer than $f(|G|)$
generators span $D$. This includes polynomial algebras in finitely many
commuting, anticommuting, or freely noncommuting variables, and mixtures
of these, whenever the action preserves the defining relations.

For general $P$, finite dimensionality does not require this
characteristic hypothesis. For example, in characteristic $2$, the
faithful action of $C_2$ interchanging the commuting variables $x,y$
has coinvariant algebra
\[
 \kk[x,y]/(x+y,xy)\cong\kk[t]/(t^2).
\]

For a finite-dimensional $G$-module $V$, we write
$D_G(V)=T(V)/\inv{T(V)^G_+}$ for the case of the tensor algebra $T(V)$.
If the action on $V$ is faithful, then $D_G(V)$ is finite dimensional
if and only if $|G|$ is invertible in $\kk$. To see the converse,
suppose that $\operatorname{char}(\kk)=p>0$ divides $|G|$ and choose
$g\in G$ of order $p$. Since $(g-1)^p=0$ and $g$ acts nontrivially,
a Jordan basis for $g$ contains a block $x_1,\ldots,x_r$, $r\geq2$,
with
\[
 gx_1=x_1,\qquad gx_i=x_i+x_{i-1},
\]
for $\quad2\leq i\leq r.$  Writing $[w]f$ for the coefficient of a word $w$ in $f$, every
$f\in T(V)^G_d$, $d\geq1$, satisfies
\[
 0=[x_{r-1}x_r^{d-1}](gf-f)=[x_r^d]f.
\]
Indeed, no other basis variable produces $x_r$ under $g$, and only
$x_{r-1}$ and $x_r$ produce $x_{r-1}$. Hence sending $x_r$ to $t$
and all other Jordan basis variables to zero annihilates every
positive-degree invariant and induces a surjection
$D_G(V)\twoheadrightarrow\kk[t]$. Thus $D_G(V)$ is infinite
dimensional. For an arbitrary action, the same criterion holds with
$G$ replaced by its image in $\mathrm{GL}(V)$.
\end{remark}

\subsection{The alternating-group bound}\label{ssec:Cn}
Let $\mathfrak A_n$ denote the alternating subgroup of $S_n$.
For a finite group $H$ acting on
an algebra $B$ and $b\in B$, put $\tr_H(b)=\sum_{h\in H}h\cdot b$.
We use a direct specialization of~\cite[Lemma 2.2(c)]{BI73}.

\begin{lemma}\label{lem:nilpotent-trace}
Let a finite group $H$ act by automorphisms on a $\kk$-algebra $B$,
not necessarily with an identity, and assume $|H|$ is invertible in $\kk$.
If $(B^H)^d=0$ for an integer $d\geq1$, then
\[
 B^{(f(|H|)+1)^d}=0.
\]
\end{lemma}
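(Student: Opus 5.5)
We follow the Bergman--Isaacs strategy: first prove a ``local'' lemma saying that products of enough trace values vanish, then bootstrap it to all of $B$. The basic input is \cite[Lemma 2.1]{BI73}, which underlies Proposition~\ref{prop:BI}. In the form we need, it says the following. Let $H$ act on an algebra $B$ with $|H|$ invertible, and let $N$ be an $H$-stable two-sided ideal. If $\tr_H(b)\in N$ for every $b\in B$, then $B^{f(|H|)}\subseteq N$. For $N=0$ this is the first assertion of Proposition~\ref{prop:BI}, since invertibility of $|H|$ gives $B^H=\tr_H(B)$. The general case follows by passing to $B/N$: the ideal $N$ is $H$-stable, so $H$ acts on $B/N$, and traces commute with the projection. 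Consequently $(B/N)^H=\tr_H(B/N)=0$, and Proposition~\ref{prop:BI} applied to $B/N$ gives $(B/N)^{f(|H|)}=0$.

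We then induct on $d$. Set $c=f(|H|)+1$. We would prove the statement for all $H$-algebras $B$ with $(B^H)^d=0$. The case $d=1$ means $B^H=0$, and then Proposition~\ref{prop:BI} gives $B^{f(|H|)}=0\subseteq B^{c}=0$; this is consistent, since $c>f(|H|)$.

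For the inductive step, let $N$ be the two-sided ideal of $B$ generated by $B^H$. It is $H$-stable, because $h(ab b')=h(a)\,b\,h(b')$ for $b\in B^H$. By the reduction in the first paragraph, $B^{f(|H|)}\subseteq N$. It remains to bound a power of $N$. A typical element of $N$ is a sum of terms $a\,b\,a'$ with $b\in B^H$, where $a,a'$ lie in $B$ or are omitted. The plan is to show $N^{c}\subseteq$ the ideal generated by $(B^H)^2$, or more usefully to iterate this. Concretely, consider a product of $c$ elements of $N$. Grouping the outer factors, such a product is a sum of terms
\[
 u_0\,b_1\,u_1\,b_2\cdots b_c\,u_c,\qquad b_i\in B^H .
\]
The middle pieces $u_i$, for $1\leq i\leq c-1$, are arbitrary elements of $B$, possibly empty. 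We apply the first-paragraph lemma to the algebra $b_1Bb_2$-type products. A cleaner route is to pass to $\bar B=B/\langle (B^H)^{d-1}\rangle$, where the induction hypothesis applies, provided the invariants of the quotient are images of invariants. That holds because averaging is available. The induction hypothesis gives $\bar B^{c^{d-1}}=0$, that is, $B^{c^{d-1}}\subseteq M:=\langle (B^H)^{d-1}\rangle$. Then we need $M^{c}=0$, using $(B^H)^d=0$.

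The main obstacle is this final step, $M^{c}=0$. Here I would follow \cite[Lemma 2.2(c)]{BI73} directly. Consider the subalgebra $M$ with its $H$-action. Its invariants are $M^H=\tr_H(M)$. Any element of $M^H$ times an element of $M$ should land in products containing $d$ invariant factors, once we average. More precisely, we need to show $(M^H)\cdot M\cdot(\ldots)$ collapses. A convenient sufficient statement is that $M^H M^H=0$ modulo terms already known to vanish. Applying the first-paragraph lemma with $N'=\{m\in M: m\,M=Mm=0\}$ then gives $M^{f(|H|)}\subseteq N'$, and hence $M^{f(|H|)+1}=0$. Combining the two steps gives
\[
 B^{c^{d-1}\cdot c}=B^{c^{d}}=0 .
\]
I would check carefully that $\tr_H(M)$ is annihilated appropriately, since that is where $(B^H)^d=0$ enters. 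If a direct argument fails, I would simply cite \cite[Lemma 2.2(c)]{BI73} with the ideal chain $B\supseteq\langle B^H\rangle\supseteq\cdots$.
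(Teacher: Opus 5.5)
\textbf{There is a genuine gap.} Several parts of your proposal are correct: the preliminary reduction (if an $H$-stable ideal $N$ contains all traces, then $B^{f(|H|)}\subseteq N$), the base case, and the inclusion $B^{c^{d-1}}\subseteq M=\langle (B^H)^{d-1}\rangle$. But this moves the whole content of the lemma into the claim $M^{c}=0$. You do not prove that claim, and the mechanism you propose for it fails.

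To apply your reduction to the algebra $M$ with the annihilator $N'$, you need $\tr_H(M)=M^H\subseteq N'$, that is, $M^HM=MM^H=0$. The condition $M^HM^H=0$ that you call sufficient is not the one required; for $d=2$ it is just $(B^H)^2=0$ again. And $M^HM=0$ is false in general:
\begin{itemize}
\item Take $\operatorname{char}\kk\neq2$ and let $H=\{1,\tau\}$ act on the free nonunital algebra on $x,y$ by $\tau x=-x$, $\tau y=y$.
\item Let $B$ be its quotient by the span of words containing a factor $w_1w_2$, where $w_1,w_2$ are nonempty words with an even number of $x$'s.
\item Averaging shows that $B^H$ is spanned by images of such even words. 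Hence $(B^H)^2=0$, so $d=2$ and $M=\langle B^H\rangle$.
\item Yet $y\in M^H$, $xy\in M$, and $y\cdot xy=yxy\neq0$, because no factor of $yxy$ splits into two even pieces.
\end{itemize}
The underlying problem is that products of elements of a two-sided ideal $\langle (B^H)^k\rangle$ interleave blocks of invariants with arbitrary elements. The hypothesis $(B^H)^d=0$ gives no control over such interleaved products.

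\textbf{The missing idea} is to filter by one-sided ideals, which keep the invariant factors contiguous. Put $L_k=B(B^H)^k$, with $L_0=B$.
\begin{itemize}
\item Each $L_k$ is an $H$-stable left ideal, hence a subalgebra.
\item Since $\tr_H(bu)=\tr_H(b)u$ for invariant $u$, we get $\tr_H(L_k)=(B^H)^{k+1}$.
\item Apply your reduction to the algebra $L_k$ and the ideal of $L_k$ generated by its traces. This gives $L_k^{f(|H|)}\subseteq L_k^\sharp(B^H)^{k+1}L_k^\sharp$, hence $L_k^{c}=L_kL_k^{f(|H|)}\subseteq L_{k+1}B^\sharp$.
\item Because $B^\sharp L_k\subseteq L_k$, one has $(L_kB^\sharp)^c\subseteq L_k^cB^\sharp$.
\item Induction on $k$ then gives $B^{c^k}\subseteq L_kB^\sharp$. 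At $k=d$ this is $B(B^H)^dB^\sharp=0$.
\end{itemize}

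This is exactly what the paper obtains in one line from \cite[Lemma 2.2(c)]{BI73}. It applies that lemma with its left ideal equal to $B$, uses $\tr_H(B)=B^H$, and cancels the factors of $|H|$. So your fallback citation is the paper's proof. However, the lemma must be invoked in that left-ideal form, with conclusion $B^{c^d}\subseteq B(\tr_H B)^dB^\sharp$, not through the two-sided chain $B\supseteq\langle B^H\rangle\supseteq\cdots$ that you describe.
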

\begin{proof}
Write $B^\sharp$ for $B$ with an identity adjoined. Since
$\tr_H(B)=B^H$, Lemma~2.2(c) of~\cite{BI73}, with its left ideal
equal to $B$, gives
\[
 B^{(f(|H|)+1)^d}\subseteq B\bigl(\tr_H(B)\bigr)^dB^\sharp=0.
\]
The factors $|H|$ in the integral statement can be cancelled
because $|H|$ is invertible in $\kk$.
\end{proof}

\begin{proposition}\label{prop:alternating}
Let $n\geq2$ and assume $\operatorname{char}(\kk)\nmid n!$.
Put $B=(C_n)_+$. Then $(B^{\mathfrak A_n})^2=0$ and
\[
 B^{(f(n!/2)+1)^2}=0.
\]
\end{proposition}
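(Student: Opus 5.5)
We first show $(B^{\mathfrak A_n})^2=0$ and then apply Lemma~\ref{lem:nilpotent-trace} with $H=\mathfrak A_n$, $d=2$. Since $|\mathfrak A_n|=n!/2$ is invertible in $\kk$, the lemma then gives $B^{(f(n!/2)+1)^2}=0$ at once. So everything rests on the claim about $\mathfrak A_n$-invariants.

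For that claim, fix $\sigma=(1\,2)$ and set $\epsilon=\operatorname{sgn}$. Since $2$ is invertible, every element of $R_n$ splits into $S_n$-symmetric and $S_n$-sign-isotypic parts. Take $\overline a,\overline b\in B^{\mathfrak A_n}$ with positive-degree representatives $a,b$. Replacing $a$ by $\frac{2}{n!}\tr_{\mathfrak A_n}(a)$ does not change its class, because the class is $\mathfrak A_n$-invariant and the ideal $I_n$ is stable. So we may assume $a,b\in R_n^{\mathfrak A_n}$. Then write $a=a_++a_-$ with $a_\pm=\frac12(a\pm\sigma a)$. The part $a_+$ is $S_n$-invariant of positive degree, so it lies in $I_n$. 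Hence $\overline a=\overline{a_-}$, where $a_-$ is sign-alternating: $\tau a_-=\epsilon(\tau)a_-$ for all $\tau\in S_n$. The same holds for $b$.

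Now the product $a_-b_-$ is $S_n$-invariant, because the two signs multiply to $1$. It has positive degree, so it lies in $\NCSym_n^+\subseteq I_n$. Thus $\overline a\,\overline b=0$. Since $B^{\mathfrak A_n}$ is spanned by such elements, $(B^{\mathfrak A_n})^2=0$.

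The only delicate point is the lifting step: an invariant class in $C_n$ must have an invariant representative. This works because averaging over $\mathfrak A_n$ commutes with the quotient map, using $|\mathfrak A_n|$ invertible and $I_n$ being $S_n$-stable. For $n=2$ we have $\mathfrak A_2$ trivial and $f(1)=2$, and the same argument covers this case.
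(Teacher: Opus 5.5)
Your proof is correct and follows the paper's argument: modulo $S_n$-invariants, odd permutations act by $-1$ on $\mathfrak A_n$-invariants, so a product of two such elements is $S_n$-invariant, and Lemma~\ref{lem:nilpotent-trace} with $H=\mathfrak A_n$, $d=2$ then finishes. The only difference is that you work with averaged representatives in $R_n$, whereas the paper works directly in $B$ using $B^{S_n}=0$. One correction: the splitting into symmetric and sign-isotypic parts holds only for $\mathfrak A_n$-invariant elements, which is all you use, and not for every element of $R_n$ when $n\geq3$.
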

\begin{proof}
We have $B^{S_n}=0$. Since $\mathfrak A_n$ is normal of index $2$, the quotient
$S_n/\mathfrak A_n$ acts on $B^{\mathfrak A_n}$. For an odd permutation $\tau$ and
$b\in B^{\mathfrak A_n}$, the element $b+\tau b$ is fixed by $S_n$, hence
zero. Thus $\tau b=-b$. If $b,c\in B^{\mathfrak A_n}$, then $bc\in B^{\mathfrak A_n}$
and
\[
 \tau(bc)=(\tau b)(\tau c)=(-b)(-c)=bc.
\]
Consequently $bc$ is fixed by $S_n$, so $bc=0$.
Apply Lemma~\ref{lem:nilpotent-trace} with $H=\mathfrak A_n$ and $d=2$.
\end{proof}

For $n\geq3$, this improves on the bound $f(n!)$. Indeed, for
$m\geq2$, Vandermonde's identity gives
$\binom{2m}{j}\geq2\binom mj$ for $1\leq j<m$ and
$\binom{2m}{m}\geq m^2+2$. {Therefore,}
\[
 \frac{f(2m)}{f(m)^2}
 =\frac{\binom{2m}{m}+1}{2}
  \prod_{j=1}^{m-1}
  \left(\frac{\binom{2m}{j}+1}{\binom mj+1}\right)^2
 >\frac72>
 \left(1+\frac1{f(m)}\right)^2,
\]
{as} $f(m)\geq6$. Both bounds remain large: for $n=5$ the
alternating-group estimate is approximately $10^{1460}$, compared
with $10^{3005}$ for $f(5!)$.

\section{Restriction and degree bounds}\label{sec:bounds}

\subsection{A nilpotence criterion from word supports}\label{ssec:Tn}
Throughout this subsection, $\kk$ is an arbitrary field. For a finite
alphabet $X$ and $\varnothing\ne S\subseteq X$, let
$R_S=\kk\inv{x:x\in S}$. 
The restriction homomorphism $\pi_{X,S}:R_X\to R_S$
fixes each letter in $S$ and sends every letter outside $S$ to zero.
For $1\leq s\leq|X|$,
put
\[
 K_{X,s}=\Span_\kk\{w\in X^*:|\supp(w)|\geq s\}.
\]
These are two-sided monomial ideals; in particular, $K_{X,|X|}$
is spanned by the full-support words. For $X=X_n$, we abbreviate
$K_{X,s}$ to $K_{n,s}$.

\begin{definition}\label{def:compatible}
A family of homogeneous two-sided ideals $(L_S)_{\varnothing\ne S\subseteq X}$, 
where $L_S$ is an ideal of $R_S$, is said to be \emph{compatible with restriction} if
\[
L_S\subseteq\pi_{Y,S}(L_Y),
\qquad \varnothing\ne S\subseteq Y\subseteq X.
\]
\end{definition}
Thus every relation on a smaller alphabet admits a lift to a larger
one. Equality is not required.

Let $F_s$ be the image of $K_{X,s}$ in $R_X/L_X$. These ideals form
the support filtration
\begin{equation}\label{eq:filtration}
 (R_X/L_X)_+=F_1\supseteq F_2\supseteq\cdots\supseteq F_{|X|}.
\end{equation}
The following lifting step is the key to Theorem~\ref{thm:B}.

\begin{lemma}\label{lem:filtration}
Suppose $(L_S)$ is compatible with restriction, and let
$1\leq s<|X|$. If every word of degree $b_s$ whose support is a subset of $S$ belongs to $L_S$
for each $S\subseteq X$ of size $s$, then
\[
 F_sF_1^{b_s}\subseteq F_{s+1}.
\]
\end{lemma}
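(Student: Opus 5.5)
It suffices to work at the level of words. Since $F_s$ is spanned modulo $L_X$ by images of words $u$ with $|\supp(u)|\geq s$, and $F_1^{b_s}$ by images of products of at least $b_s$ letters, the product $F_sF_1^{b_s}$ is spanned by images of words $w=uv$. Here $|\supp(u)|\geq s$ and $v$ has length at least $b_s$. If $|\supp(w)|\geq s+1$, then $w\in K_{X,s+1}$ and its image lies in $F_{s+1}$. So I will reduce to the case $|\supp(w)|=s$ exactly. Put $S=\supp(w)$; then $\supp(u)=S$ and $\supp(v)\subseteq S$. The goal is to show that $w\in L_X+K_{X,s+1}$.

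The key step lifts a relation from $S$ to $X$. The word $v$ has degree at least $b_s$ and support inside $S$. Write $v=v'v''$ with $v'$ of degree exactly $b_s$. By hypothesis $v'\in L_S$, so $v\in L_S$ because $L_S$ is a two-sided ideal. By compatibility with restriction (Definition~\ref{def:compatible}) with $Y=X$, there is $g\in L_X$ with $\pi_{X,S}(g)=v$. Then $ug\in L_X$. I will compare $ug$ with $uv$. We have $g-v\in\ker\pi_{X,S}$, and this kernel is spanned by the words containing at least one letter outside $S$. Hence $u(g-v)$ is a combination of words $uz$, where $z$ contains a letter outside $S$. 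Each such $uz$ contains every letter of $S=\supp(u)$ and also a letter outside $S$. So its support has size at least $s+1$, and $u(g-v)\in K_{X,s+1}$. Therefore $w=uv=ug-u(g-v)\in L_X+K_{X,s+1}$, and its image lies in $F_{s+1}$.

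Finally I will check that the spanning claim is legitimate. Elements of $F_1^{b_s}$ are sums of products of $b_s$ positive-degree classes. Because $R_X/L_X$ is generated in degree one and the ideals are homogeneous, these products are spanned by images of words of length at least $b_s$. Products of images are images of products, so $F_sF_1^{b_s}$ is indeed spanned by the images of the words $uv$ described above.

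The only delicate point is the lifting step. The left factor $u$ must already carry the full support $S$. That is exactly what forces every correction term from $g-v$ to pick up an extra letter, and this is why the filtration is multiplied by $F_s$ on the left. Given that, nothing else is needed. I expect no real obstacle beyond keeping track of the support bookkeeping.
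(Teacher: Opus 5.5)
Your proposal is correct and follows the paper's proof essentially step for step. You reduce to words $uv$ with $\supp(uv)=\supp(u)=S$, lift $v\in L_S$ to an element of $L_X$, and observe that the correction $u(\widetilde v-v)$ lies in $K_{X,s+1}$ because $u$ already contains every letter of $S$. You also make explicit that words of degree greater than $b_s$ lie in $L_S$ by the ideal property, which the paper leaves implicit.
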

\begin{proof}
Consider a product of two words $uv$, where $u$ uses at least $s$ letters and
$v$ has degree at least $b_s$. If $uv$ uses more than $s$ letters,
its class lies in $F_{s+1}$. Otherwise
$S=\supp(u)=\supp(uv)$ has size $s$, and $v\in L_S$.
Choose a lift $\widetilde v\in L_X$ with
$\pi_{X,S}(\widetilde v)=v$. Every word of $\widetilde v-v$ has a
letter outside $S$. Since $u$ already uses every letter of $S$,
\[
 uv=u\widetilde v-u(\widetilde v-v)
 \in L_X+K_{X,s+1}.
\]
Such products span the required ideal product.
\end{proof}

\begin{theorem}\label{thm:general}
Let $|X|=n$ and suppose $(L_S)$ is compatible with restriction.
For positive integers $e_1,\ldots,e_n$, assume
\[
 K_{S, s}^{e_s}\subseteq L_S,\qquad|S|=s.
\]
Then
\[
 (R_X/L_X)_+^{\,b_n}=0,
 \qquad b_n=e_n\prod_{s=1}^{n-1}(1+e_s).
\]
\end{theorem}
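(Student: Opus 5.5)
We prove the theorem by induction along the support filtration~\eqref{eq:filtration}, using Lemma~\ref{lem:filtration} to pass from $F_s$ to $F_{s+1}$. The aim is to show, for each $1\leq s\leq n$, that
\[
 F_1^{\,c_s}\subseteq F_s,\qquad c_s=\prod_{t=1}^{s-1}(1+e_t),
\]
and then finish at level $n$. Since $K_{X,n}$ consists of full-support words, $F_n$ is the image of $K_{X,X}$, and the hypothesis $K_{X,X}^{e_n}\subseteq L_X$ (the case $S=X$, $s=n$) gives $F_n^{e_n}=0$. Hence
\[
 F_1^{\,e_nc_n}\subseteq F_n^{e_n}=0 .
\]
This requires $F_1^{e_nc_n}\subseteq (F_1^{c_n})^{e_n}$, which is immediate.

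\textbf{Reduction to the lemma's hypothesis.} To apply Lemma~\ref{lem:filtration} at level $s<n$ with $b_s=e_s$, I must check that every word $v$ of degree $e_s$ with $\supp(v)\subseteq S$, $|S|=s$, lies in $L_S$. This fails as stated: such a word need not have full support in $S$. So I will instead prove a modified lifting step:
\[
 F_s\,F_s^{\,e_s}\subseteq F_{s+1},\qquad\text{more precisely}\qquad F_s^{\,1+e_s}\subseteq F_{s+1}.
\]
Take a product $w_0w_1\cdots w_{e_s}$ of words, each with $|\supp(w_i)|\geq s$. If the total support exceeds $s$, the product already lies in $F_{s+1}$. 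Otherwise every $w_i$ has support exactly $S=\supp(w_0)$, with $|S|=s$. Then $v=w_1\cdots w_{e_s}\in K_{S,s}^{e_s}\subseteq L_S$. Now repeat the argument of Lemma~\ref{lem:filtration}: lift $v$ to $\widetilde v\in L_X$ by compatibility with restriction. Then $w_0v=w_0\widetilde v-w_0(\widetilde v-v)\in L_X+K_{X,s+1}$, because $w_0$ contains all letters of $S$ and every word of $\widetilde v-v$ contains a letter outside $S$. The ideal products are spanned by such word products, so the inclusion holds.

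\textbf{Induction.} Assume $F_1^{c_s}\subseteq F_s$. Since $c_{s+1}=c_s(1+e_s)$,
\[
 F_1^{\,c_{s+1}}\subseteq (F_1^{c_s})^{1+e_s}\subseteq F_s^{1+e_s}\subseteq F_{s+1}.
\]
The base case $c_1=1$ is trivial. Combining with the first paragraph gives $(R_X/L_X)_+^{b_n}=0$ with $b_n=e_n\prod_{s<n}(1+e_s)$. The hypothesis for $|S|=s$ only requires it for the single size-$s$ set $S$ arising in each product, and nothing depends on the characteristic.

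\textbf{Main subtlety.} The delicate point is the case split in the lifting step. We must ensure that when the total support equals $s$, every factor $w_i$ has \emph{full} support on the same $S$, so that the hypothesis $K_{S,s}^{e_s}\subseteq L_S$ applies. This holds because each $w_i$ has at least $s$ letters, all contained in a set of size $s$. We must also ensure that the correction term lands in $K_{X,s+1}$. Both hold, so I expect the argument to go through as outlined.
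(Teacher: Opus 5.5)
Your argument is correct, but it is not the route the paper takes for this theorem. The paper inducts on the alphabet size. It assumes $b_s$ is already a vanishing degree of the whole quotient $R_S/L_S$ for $|S|=s<n$, so that every word of degree $b_s$ on $S$, not just a product of full-support words, lies in $L_S$. It then applies Lemma~\ref{lem:filtration} as $F_sF_1^{b_s}\subseteq F_{s+1}$; this is how it avoids the obstruction you noticed with $b_s=e_s$. That gives $F_1^{1+b_1+\cdots+b_{s-1}}\subseteq F_s$ and the additive recurrence~\eqref{eq:support-recurrence}, which telescopes to the product formula.

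You instead prove the layer step $F_s^{1+e_s}\subseteq F_{s+1}$ directly from the full-support hypothesis. Requiring every factor to lie in $F_s$ forces each factor to have support exactly $S$ in the critical case. Since $1+\sum_{t<s}b_t=\prod_{t<s}(1+e_t)$, both routes yield the same inclusions $F_1^{c_s}\subseteq F_s$ and the same $b_n$.

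Your version is more economical. It needs no induction and no information about the smaller quotients. It also uses only the lifts $L_S\subseteq\pi_{X,S}(L_X)$ to the full alphabet, not compatibility for intermediate $Y$. It is exactly the argument the paper uses at the end of the proof of Proposition~\ref{prop:seven-bound}.

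The paper's version is more modular, because its step needs only one factor in $F_s$. Independently computed vanishing degrees of smaller quotients can therefore be substituted directly into~\eqref{eq:support-recurrence}, for instance $b_2=2$ and $b_3=2r+2$ in Corollary~\ref{cor:r-char-zero}, or $(T_5)_6=0$ in Remark~\ref{rem:ncqsym-improvement}. Your step alone does not exploit such information.
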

\begin{proof}
For a singleton alphabet, the hypothesis gives $b_1=e_1$.
Inductively suppose that $b_s$ is a vanishing degree for every
alphabet of size $s<n$. Applying Lemma~\ref{lem:filtration}
successively gives
\[
 F_1^{\,1+b_1+\cdots+b_{s-1}}\subseteq F_s,
 \qquad1\leq s\leq n.
\]
Since $F_n^{e_n}=0$, we may take
\begin{equation}\label{eq:support-recurrence}
 b_n=e_n\left(1+\sum_{s=1}^{n-1}b_s\right).
\end{equation}
Starting from $b_1=e_1$, this recurrence gives
$1+\sum_{s<n}b_s=\prod_{s<n}(1+e_s)$, proving the formula.
\end{proof}

The full-support ideal $K_S$ in Theorem~\ref{thm:B} is
$K_{S,|S|}$ in the present notation. Thus,  Theorem~\ref{thm:general}
 proves Theorem~\ref{thm:B}. 
 The support argument of our proof uses deletion of variables in place of averaging. Its
increasing subsets resemble those in~\cite[Lemma 2.1]{BI73}.

Our proof also shows that any
improved vanishing degrees on smaller alphabets can be inserted
in~\eqref{eq:support-recurrence}. 

\begin{proposition}[A sharper support bound]\label{prop:seven-bound}
Under the hypotheses of Theorem~\ref{thm:general},\hfill\break
if $e_1=e_2=e_3=1$, then
\begin{equation}\label{eq:refined-general-bound}
 (R_X/L_X)_+^{\,7e_n\prod_{s=4}^{n-1}(1+e_s)}=0,
 \qquad n\geq4.
\end{equation}
\end{proposition}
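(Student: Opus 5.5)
The plan is to reuse the recurrence~\eqref{eq:support-recurrence} from the proof of Theorem~\ref{thm:general}, but to insert a better vanishing degree for three-letter alphabets. With $e_1=e_2=e_3=1$, Theorem~\ref{thm:general} gives $b_1=1$, $b_2=2$ and $b_3=4$. Then $1+b_1+b_2+b_3=8$, which is exactly the factor $8=\prod_{s\le3}(1+e_s)$ in the unrefined bound. If we can show that $b_3=3$ is a vanishing degree for every three-letter alphabet $S$, then $1+b_1+b_2+b_3=7$. The recurrence $b_s=e_s(1+\sum_{t<s}b_t)$ for $s\ge4$ then gives inductively
\[
1+\sum_{t<s}b_t=7\prod_{t=4}^{s-1}(1+e_t),
\]
and hence $b_n=7e_n\prod_{s=4}^{n-1}(1+e_s)$. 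The families $(L_T)_{T\subseteq S}$ restricted to subalphabets are still compatible with restriction, so the statement for $|S|=3$ is a self-contained local problem.

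We must therefore prove that every word of length $3$ on $S=\{a,b,c\}$ lies in $L_S$. Words of full support lie in $K_{S,3}\subseteq L_S$ because $e_3=1$. For the remaining words we use the lifting trick of Lemma~\ref{lem:filtration} on both sides. The left-right mirror of that lemma holds by the same proof: if $v\in L_T$ and $\supp(u)=T$, then $vu\in L_S+K_{S,|T|+1}$ as well as $uv\in L_S+K_{S,|T|+1}$. We apply it in its simplest forms.
\begin{itemize}
\item Since $e_1=1$, each letter $x$ has a linear lift $\tilde x=x+\ell_x\in L_S$, where $\ell_x$ involves only the other letters. Hence, modulo $L_S$, any occurrence $xx$ adjacent to a factor may be replaced by $-x\ell_x$ or $-\ell_x x$.
\item Since $e_2=1$, every full-support word on a pair $T=\{x,y\}$ lies in $L_T$. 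So each word of support exactly $T$ can be replaced modulo $L_S$ by minus its lift's correction terms, and these words contain the third letter.
\end{itemize}
The second move immediately disposes of all words of length $3$ with support of size $2$, because such a word already has full support on its pair. A length-$3$ word of support $\{x,y\}$ is congruent modulo $L_S$ to a combination of words of length $3$ involving the third letter. Those words have full support on $S$ or lie in $L_S$ after one further pass. The cubes $x^3$ reduce via $xx\equiv-x\ell_x$ to words $x\,x\,y$ with $y\ne x$, which are handled by the previous case.

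The main obstacle is checking that this reduction is genuinely well founded: each rewriting must strictly increase support or land in $L_S$, with no cycles among words of support $\le2$. In the approach above, the $e_2=1$ move is the one that ensures this. Using only the unrefined Lemma~\ref{lem:filtration} with $b_2=2$ is not enough, since words such as $aab$ do not factor as $uv$ with $\supp(u)=\{a,b\}$ and $\deg v\ge2$. If the direct argument leaves a residual case, I would first try handling it by the explicit linear system on the finitely many support-$\le2$ words of length $3$, using the two-sided lifts simultaneously.
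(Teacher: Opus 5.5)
Your bookkeeping is correct: if every three-letter quotient vanished in degree $3$, inserting $b_3=3$ into \eqref{eq:support-recurrence} would give exactly the stated bound. The gap is that this local claim is false under the hypotheses, so the approach cannot be completed.

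\textbf{Counterexample to $b_3=3$.} For each $S$, let $L_S$ be the preimage under abelianization $R_S\to\kk[S]$ of the ideal generated by the positive-degree symmetric polynomials in the commuting variables of $S$. This is the family mentioned after Question~\ref{q:sharp}.
\begin{itemize}
\item It is compatible with restriction: elementary symmetric polynomials of $Y$ restrict to those of $S$, and the commutator ideal of $R_S$ lies in that of $R_Y$.
\item Every full-support word abelianizes to a multiple of $\prod_{x\in S}x$, so all exponents in the hypothesis equal $1$.
\item For $S=\{a,b,c\}$, however, $R_S/L_S$ is the classical coinvariant algebra, with Hilbert series $1+2t+2t^2+t^3$. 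After $c=-a-b$ the relations read $a^2+ab+b^2=0$ and $a^2b+ab^2=0$, and the degree-$3$ part is spanned by $a^2b\neq0$.
\end{itemize}
So the support-two word $aab$ is not in $L_S$. Your rewriting shows the mechanism. The lift $a(ab+ac+bc)=aab+aac+abc\in L_S$ gives $aab\equiv-aac$, and the same relation rewrites $aac$ back to $-aab$. The reduction cycles, which is exactly the well-foundedness failure you flagged. Hence $4$ is the best vanishing degree for triples in general. Any argument that only improves local vanishing degrees in \eqref{eq:support-recurrence} cannot get below $1+1+2+4=8$. Yet in this same family the four-letter quotient has $\nu=7$, so the true bound comes from elsewhere.

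\textbf{The missing idea.} The gain from $8$ to $7$ is not local; it uses the full-support prefix. The paper works directly in $A=R_X/L_X$, as follows.
\begin{enumerate}
\item Since $A_+^4\subseteq F_3$, modulo $F_4$ every degree-seven class is spanned by products $uv$. Here $|u|=4$, $\supp(u)=S$ is a triple, and $|v|=3$ with $\supp(v)\subseteq S$.
\item Make restriction exact by setting $L_S:=\pi_{X,S}(L_X)$, extend the field, and normalize the linear relation to $\sum_x x$. An explicit computation then shows the cubic part of $R_S/L_S$ is at most one-dimensional. It is spanned by $bcb$ for any prescribed $b\neq c$; the example above shows this part can be nonzero.
\item Let $a$ be the repeated letter of $u$ and $b,c$ the other two. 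Lifting replaces $uv$ by a multiple of $u\,bcb$ modulo $F_4$.
\item Inside $u\,bcb$ there is an interval of support $S$ whose complementary prefix and suffix still have combined support $S$. For example, for $u=abac$ the interval is $acb$, with context $ab$ and $cb$.
\item That interval lies in $L_S$ because $e_3=1$. The two-sided lift \eqref{eq:context-lift} then puts $u\,bcb$ in $F_4$.
\end{enumerate}
This yields $A_+^7\subseteq F_4$. From there, either $F_s^{1+e_s}\subseteq F_{s+1}$ together with $F_n^{e_n}=0$, or your recursion, finishes the proof.
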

\begin{proof}
The essential improvement is the uniform inclusion $A_+^7\subseteq F_4$,
where $A=R_X/L_X$. It requires only $e_1=e_2=e_3=1$.

Replace $L_S$ by $\pi_{X,S}(L_X)$, which enlarges the smaller ideals
and makes restriction exact without changing $A$. After extending to
an infinite field, if necessary, the singleton restrictions allow a
linear relation with every coefficient nonzero. Rescaling the letters
makes it $\sum_{x\in X}x$. It restricts to the corresponding sum on
each $S$. Every two-letter quotient consequently vanishes in degree
$2$: the mixed products vanish, and the linear relation kills the
squares.

For a triple $S=\{a,b,c\}$, eliminate $c=-a-b$. The six full-support
cubics give
\[
 a^2b=aba=ba^2=\eta,\qquad ab^2=bab=b^2a=-\eta.
\]
Exact restriction to the pairs allows a quadratic relation whose
expression after elimination is
\[
 q=\alpha a^2+\beta ab+\gamma ba+\delta b^2,
 \qquad \alpha\delta(\alpha-\beta-\gamma+\delta)\ne0.
\]
Indeed, exact restriction and the vanishing of the pair quotients in
degree two give $\pi_{S,P}((L_S)_2)=(R_P)_2$ for each pair $P\subset S$.
Setting the third letter to zero and evaluating the two letters of $P$
at $1,-1$ therefore gives a nonzero linear functional on $(L_S)_2$.
Choose $q$ outside the kernels of these three functionals.
Multiplying $q$ on the left by $a$ and $b$ gives
\[
 \alpha a^3+(\beta+\gamma-\delta)\eta=0,
 \qquad
 \delta b^3+(\alpha-\beta-\gamma)\eta=0.
\]
Thus the cubic quotient is spanned by $\eta$, and
\[
 bcb=\frac{\alpha-\beta-\gamma+\delta}{\delta}\,\eta.
\]
Thus any specified $bcb$, with $b\ne c$ in $S$, spans that quotient.

We use the following two-sided form of the lifting argument: if
$S\subsetneq X$, $h\in L_S$, and the concatenation of words $p,z$
has support $S$, then
\begin{equation}\label{eq:context-lift}
 phz\in F_{|S|+1}\quad\text{in }A.
\end{equation}
Indeed, for a lift $\widetilde h\in L_X$, we have
$phz=-p(\widetilde h-h)z$ in $A$. The context $p,z$ supplies every
letter of $S$ to each error term, which also contains a letter outside
$S$. Now $F_1^2\subseteq F_2$ and $F_2^2\subseteq F_3$
give $A_+^4\subseteq F_3$. Modulo $F_4$, a degree-seven class is
therefore spanned by $uv$, where $|u|=4$, $|v|=3$, and both use only
$S=\supp(u)$, a triple. Let $a$ be the repeated letter of $u$, and
$b,c$ the other letters. The cubic calculation and lifting replace
$v$ by a multiple of $bcb$.

In $ubcb$, start an interval at the second $a$. If the original $c$
follows this $a$, end at the first appended $b$; otherwise end at the
appended $c$. The interval contains $a,b,c$, and its complementary
prefix and suffix together contain $a,b,c$. The word in the interval
belongs to $L_S$, so
\eqref{eq:context-lift} puts $ubcb$ in $F_4$. Since $A$ is generated
in degree one and $F_4$ is an ideal, this proves $A_+^7\subseteq F_4$.

Lifting gives
$F_s^{1+e_s}\subseteq F_{s+1}$ for $s<n$: one full-support factor supplies the
context for a relation consisting of $e_s$ further factors.
Together with $F_n^{e_n}=0$, this proves~\eqref{eq:refined-general-bound}.
{The} conclusion descends from the field extension.
\end{proof}

The lifting hypothesis is essential. Full-support vanishing by
itself leaves an infinite-dimensional quotient: for $n\geq2$,
the substitution $x_1\mapsto t$, $x_i\mapsto0$ for $i>1$ gives
$R_n/K_{n,n}\twoheadrightarrow\kk[t]$. Even adjoining
$x_1+\cdots+x_n$ does not suffice when $n\geq3$, as shown by
$x_1\mapsto t$, $x_2\mapsto-t$, and $x_i\mapsto0$ for $i>2$.

\subsection{Quasisymmetric coinvariants}\label{ssec:quasisymmetric}
For an ordered subalphabet $S$, let $J_S$ be its quasisymmetric
ideal, with the inherited order. Formula~\eqref{eq:MA} gives
\begin{equation}\label{eq:restriction}
 \pi_{Y,S}(M_{\mathcal A}(Y))=M_{\mathcal A}(S),
 \qquad \pi_{Y,S}(J_Y)=J_S.
\end{equation}
Moreover, a full-support word on $S$ is itself an element
$M_{\mathcal A}(S)$ with $|S|$ blocks.

\begin{corollary}\label{thm:TL-bound}
Over every field, for all $n\geq1$,
\[
 (T_n)_+^{2^{n-1}}=0.
\]
In particular, $T_n$ is finite dimensional and
$\deg\Hilb(T_n;t)\leq2^{n-1}-1$.
\end{corollary}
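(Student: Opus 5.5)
We will obtain Corollary~\ref{thm:TL-bound} as a direct application of Theorem~\ref{thm:general}, taking $X=X_n$ and $L_S=J_S$, the quasisymmetric ideal of the ordered subalphabet $S$. Then $L_X=J_n$ and $R_X/L_X=T_n$. There are two hypotheses to check: compatibility with restriction, and the inclusions $K_{S,s}^{e_s}\subseteq L_S$ with every $e_s=1$.

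\emph{Compatibility.} By~\eqref{eq:restriction}, $\pi_{Y,S}(J_Y)=J_S$ for all $\varnothing\neq S\subseteq Y\subseteq X$. This uses only the fact that deleting letters of $Y\setminus S$ from $M_{\mathcal A}(Y)$ kills exactly those terms that use a deleted letter. The terms that survive are precisely those of $M_{\mathcal A}(S)$, with the inherited order. Equality in particular gives the inclusion $L_S\subseteq\pi_{Y,S}(L_Y)$ required by Definition~\ref{def:compatible}.

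\emph{Full-support words.} Let $|S|=s$, with elements ordered $y_1<\cdots<y_s$, and let $w$ be a word of degree $d$ on $S$ whose support is all of $S$. Define the ordered set partition $\mathcal A=(A_1,\ldots,A_s)$ of $[d]$ by letting $A_q$ be the set of positions of $w$ occupied by $y_q$. Each block is nonempty because $w$ uses every letter.

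In formula~\eqref{eq:MA} there are $r=s$ blocks, so the only admissible choice of indices $j_1<\cdots<j_s$ inside $S$ is the full list. Hence $M_{\mathcal A}(S)=w$. Since $d\geq s\geq1$, this element lies in $\NCQSym_S^+$, and therefore $w\in J_S$. As such words span $K_{S,s}$, we get $K_{S,s}\subseteq L_S$, so $e_s=1$ for all $s$.

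Theorem~\ref{thm:general} then gives $b_n=1\cdot\prod_{s=1}^{n-1}2=2^{n-1}$, so $(T_n)_+^{2^{n-1}}=0$ over every field. Since $T_n$ is generated in degree one, $(T_n)_+^{d}=\bigoplus_{j\geq d}(T_n)_j$. It follows that $(T_n)_j=0$ for all $j\geq 2^{n-1}$, so $T_n$ is spanned by the finitely many words of length less than $2^{n-1}$, and $\deg\Hilb(T_n;t)\leq 2^{n-1}-1$.

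I expect no real obstacle. The only point needing care is the identification of a full-support word with a single monomial quasisymmetric function. This depends on the strict increase $j_1<\cdots<j_r$ in~\eqref{eq:MA} leaving exactly one index choice when $r=|S|$. That is the observation already recorded just before the statement.
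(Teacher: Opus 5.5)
Your proposal is correct and matches the paper's proof: it applies Theorem~\ref{thm:general} to the family $(J_S)$, uses \eqref{eq:restriction} for compatibility, and notes that every full-support word on $S$ is a single $M_{\mathcal A}(S)$ with $|S|$ blocks, so $e_s=1$ and $b_n=2^{n-1}$. You have simply written out the details that the paper states in one line.
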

\begin{proof}
The family $(J_S)$ satisfies both hypotheses of
Theorem~\ref{thm:general} with $e_s=1$.
\end{proof}

\begin{remark}[Sharper quasisymmetric bounds]\label{rem:ncqsym-improvement}
Proposition~\ref{prop:seven-bound} gives $\nu(T_n)\leq7\cdot2^{n-4}$ for
$n\geq4$ over every field. In characteristic zero, the explicit
quasisymmetric relations give the stronger bound
\begin{equation}\label{eq:ncqsym-improved-bound}
 \nu(T_n)\leq23\cdot2^{n-7},\qquad n\geq7.
\end{equation}
Here the additional local steps are computer-assisted. We describe
them to distinguish the finite calculations from the argument valid
for every $n$.

For a four-letter alphabet $S$, put
\[
 H_{S,8}=\Span_\kk\{phz:h\in(J_S)_k,\ k\geq1,
       \ |p|+k+|z|=8,\ \supp(pz)=S\},
\]
where $p,z$ are words on $S$. Exact linear algebra gives
$H_{S,8}=(K_{S,4})_8$. Of the $40\,824$ full-support words of degree
$8$, precisely $22\,536$ have a contiguous quartic whose complementary
context still has full support; these lie in $H_{S,8}$ because
$(T_4)_4=0$. On the remaining $18\,288$ words, the rows
$pM_{\mathcal A}(S)z$ with $1\leq|\mathcal A|\leq3$ and full-support
context have rank $18\,288$ modulo $2$, hence also over $\mathbb Q$.
Here $|\mathcal A|$ is the total size of the ordered set partition.
Together with $A_+^8\subseteq F_4$, this calculation and
\eqref{eq:context-lift} prove
\[
 A_+^8\subseteq F_5,\qquad A=T_n,
\]
uniformly in the ambient alphabet, for $n\geq5$.

The small-rank calculation in Section~\ref{ssec:small-rank} gives
$(T_5)_6=0$. A graded relation-matrix calculation over $\mathbb F_2$
gives $(T_6)_9=0$, and therefore the same vanishing in characteristic
zero. Lemma~\ref{lem:filtration} now gives
\[
 A_+^{14}\subseteq F_6,\qquad A_+^{23}\subseteq F_7.
\]
Repeated squaring, followed by $F_n=0$, proves
\eqref{eq:ncqsym-improved-bound}.
\end{remark}

\subsection{Malvenuto--Reutenauer Hopf algebra and standardized coinvariants}\label{ssec:fqsym}
For a word $w=x_{i_1}x_{i_2}\cdots x_{i_m}$,
its standardization $\std(w){=\sigma_1\sigma_2\cdots\sigma_m}\in S_m$ is defined by
\[
 \sigma_t=\#\{j:i_j<i_t\}+\#\{j\leq t:i_j=i_t\}.
\]
For $\sigma\in S_m$, define
\begin{equation}\label{eq:FB}
F_\sigma(X_n)=\sum_{\substack{w\in X_n^*\\\std(w)^{-1}=\sigma}}w.
\end{equation}
The algebra of free quasisymmetric polynomials on $X_n$ is
\[
 \FQSym_n=\bigoplus_{m\geq0}\Span_\kk\{F_\sigma(X_n):\sigma\in S_m\}.
\]

On a countably infinite ordered alphabet, these series realize the
Malvenuto--Reutenauer Hopf algebra~\cite{MR95}; see
Duchamp, Hivert, and Thibon~\cite[Section 3.1]{DHT02} for the word realization
and its basis conventions. This Hopf algebra is self-dual: if
$G_\sigma=\sum_{\std(w)=\sigma}w$, then
$F_\sigma=G_{\sigma^{-1}}$ is the basis dual to $G_\sigma$.
The algebra $\FQSym_n$ is its finite-alphabet specialization.
The coproduct is realized by splitting the infinite alphabet, as
discussed in Section~\ref{ssec:hopf}.

The standard ideal of $R_n$ and its coinvariant algebra are
\[
 \SI_n=\inv{\FQSym_n^+}
      =\inv{F_\sigma(X_n):\sigma\in\bigcup_{m\geq1}S_m}
      \subseteq R_n,
 \qquad \text{and} \qquad  \SC_n=R_n/\SI_n,
\]
respectively.
For any subalphabet $S$, define $\SI_S$ using the inherited order.
Formula~\eqref{eq:FB} gives
\begin{equation}\label{eq:restrictionMR}
 \pi_{Y,S}(F_{\sigma}(Y))=F_{\sigma}(S),
 \qquad \pi_{Y,S}(\SI_Y)=\SI_S.
\end{equation}

As shown in~\cite{DHT02}, every
$F_\sigma(X_n)$ is a sum of monomial quasisymmetric polynomials,
and we have the natural inclusion
\[
 \FQSym_n\subseteq\NCQSym_n.
\]
Recall that we also have $ \NCSym_n\subseteq\NCQSym_n$, but
for $n\geq2$, neither
$\NCSym_n$ nor $\FQSym_n$ contains the other. Indeed,
$p_2=\sum_i x_i^2$ belongs to $\NCSym_n$ but not to $\FQSym_n$:
its coefficients on $x_1^2$ and $x_1x_2$ differ, although these words
have the same standardization. Conversely,
$F_{21}(X_n)=\sum_{i>j}x_ix_j$ is not invariant under permuting
the variables.

\begin{remark}\label{rem:fqsym-BI}
The inclusion $\FQSym_n\subseteq\NCQSym_n$ gives
$\SI_n\subseteq J_n$ and a surjection $\SC_n\twoheadrightarrow T_n$.
Finiteness of $T_n$ therefore does not imply finiteness of $\SC_n$.
For $n\geq3$, the Bergman--Isaacs argument used for $C_n$ does not
apply to $\SC_n$, even indirectly through an ideal generated by
invariants of a subgroup of the natural permutation group.

To see both obstructions, work first with $X_3=\{x_1,x_2,x_3\}$.
The linear relation $F_1=x_1+x_2+x_3$ eliminates $x_3=-x_1-x_2$.
Since $F_{12}+F_{21}=F_1^2$, the remaining degree-two relations
span the single line generated by
\[
 q=x_1^2+x_1x_2+x_2^2,
 \qquad F_{21}=x_2x_1+x_3x_1+x_3x_2=-q.
\]
Thus $x_1x_2-x_2x_1\notin\SI_3$. But interchanging $x_1$ and $x_2$ changes
$F_{21}$ by $x_1x_2-x_2x_1$, so $\SI_3$ is not $S_3$-stable.
Moreover, the symmetric polynomial $p_2=x_1^2+x_2^2+x_3^2$ satisfies
\[
 p_2=2x_1^2+x_1x_2+x_2x_1+2x_2^2\equiv x_2x_1-x_1x_2\ne0\pmod{\SI_3}.
\]
Exact restriction in~\eqref{eq:restrictionMR} gives the same
obstructions for every $n\geq3$: $\SI_n$ is not $S_n$-stable
and $p_2(X_n)\notin\SI_n$. Since $p_2(X_n)$ is fixed by every
subgroup $H\leq S_n$, we have
\[
 \inv{(R_n^H)^+}\not\subseteq\SI_n
 \qquad(H\leq S_n,\ n\geq3).
\]
Consequently, none of these invariant ideals provides a subideal
from which the Bergman--Isaacs theorem could imply finiteness of
$\SC_n$. The proof below instead uses Theorem~\ref{thm:B} alone.
\end{remark}

\begin{lemma}\label{lem:SI}
If $u$ and $v$ are words with {full support} on $X_n$, then
$F_{\std(uv)^{-1}}(X_n)=uv$.
\end{lemma}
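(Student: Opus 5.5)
We need to show that the only word $w$ with $\std(w)=\std(uv)$ is $uv$ itself. Once that holds, the definition~\eqref{eq:FB} gives $F_{\std(uv)^{-1}}(X_n)=\sum_{\std(w)=\std(uv)}w=uv$. Write $\tau=\std(uv)\in S_m$, with $m=|u|+|v|$.

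The first step is to recall what a standardization determines. A word $w=x_{i_1}\cdots x_{i_m}$ with $\std(w)=\tau$ is determined by $\tau$ together with the set of "ascent-breaks" in the value order. Read the positions in the order $\tau^{-1}(1),\tau^{-1}(2),\ldots,\tau^{-1}(m)$; the indices $i_{\tau^{-1}(1)}\leq i_{\tau^{-1}(2)}\leq\cdots$ are weakly increasing. Moreover, equality $i_{\tau^{-1}(k)}=i_{\tau^{-1}(k+1)}$ is only allowed when $\tau^{-1}(k)<\tau^{-1}(k+1)$, since equal letters are standardized left to right. Call $k$ a descent of $\tau^{-1}$ if $\tau^{-1}(k)>\tau^{-1}(k+1)$. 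At every such descent the index must strictly increase. Let $\operatorname{des}(\tau^{-1})$ denote the number of descents. Then the words with standardization $\tau$ over $X_n$ correspond to weakly increasing sequences that strictly increase at the descents of $\tau^{-1}$. Such sequences take values in $[n]$, so they use at least $1+\operatorname{des}(\tau^{-1})$ distinct letters.

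The key step is to show that $\tau^{-1}$ has at least $n-1$ descents, one for each value step. Fix a letter value $j<n$ and consider, in the word $uv$, the last occurrence of $x_j$ and the first occurrence of $x_{j+1}$ in the value reading order. Let $k$ be the standardized value of the rightmost $x_j$ in $uv$; then $k+1$ is the value of the leftmost $x_{j+1}$. Since $x_j$ occurs in $v$, its rightmost occurrence lies in $v$. Since $x_{j+1}$ occurs in $u$, its leftmost occurrence lies in $u$. Hence $\tau^{-1}(k)>\tau^{-1}(k+1)$, so $k$ is a descent. These $n-1$ descents are distinct, because the values $k$ correspond to distinct $j$.

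It remains to conclude. Any word $w$ with $\std(w)=\tau$ has a weakly increasing index sequence over $[n]$ that strictly increases at these $n-1$ positions. Its values therefore start at $1$ and increase by exactly one at each of those positions, and stay constant at every other position. That sequence is forced, so $w$ is unique, and $uv$ is one such word. The only delicate point is the verification in the previous paragraph, namely checking that the index really jumps exactly at the boundary between letter values. That is straightforward from the definition of $\std$.
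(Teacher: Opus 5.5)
Your argument is correct and is essentially the paper's proof: you locate the $n-1$ forced descents of $\std(uv)^{-1}$ at the boundaries between consecutive letter values (last $x_j$ in $v$, first $x_{j+1}$ in $u$), then note that a weakly increasing index sequence in $[n]$ with $n-1$ forced strict increases is unique. The only cosmetic difference is that you use ``at least $n-1$'' descents, which suffices, whereas the paper observes there are exactly $n-1$.
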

\begin{proof}
Put $w=uv$, $m=|w|$, and $\sigma=\std(w)^{-1}$. The permutation
$\sigma$ lists the positions of $x_1$ in increasing order, then those
of $x_2$, and so on. At each boundary between consecutive letters,
the last occurrence of $x_i$ lies in $v$ and the first occurrence of
$x_{i+1}$ lies in $u$. Thus $\sigma$ has exactly $n-1$ descents.

If $z=z_1\cdots z_m$ is another word on $X_n$ with
$\std(z)^{-1}=\sigma$, then
\[
 z_{\sigma_1}\leq\cdots\leq z_{\sigma_m},\qquad
 z_{\sigma_j}<z_{\sigma_{j+1}} \text{whenever }\sigma_j>\sigma_{j+1}.
\]
The $n-1$ forced strict increases on an $n$-letter alphabet determine
this sequence uniquely: it starts at $x_1$, increases by one letter
at each descent of $\sigma$, and is constant elsewhere. Hence
$z=w$, proving the claim.
\end{proof}

\begin{corollary}\label{thm:fqsym-bound}
Over every field, $\SC_n$ is finite dimensional for every $n\geq1$.
More precisely, $\SC_1=\kk$, $\SC_2\cong\kk[t]/(t^2)$, and
\[
 (\SC_n)_+^{8\cdot3^{n-3}}=0,\qquad n\geq3.
\]
In particular, $\deg\Hilb(\SC_n;t)\leq8\cdot3^{n-3}-1$ for $n\geq3$.
\end{corollary}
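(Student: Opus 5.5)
The plan is to apply Theorem~\ref{thm:general} (equivalently Theorem~\ref{thm:B}) to the family $(\SI_S)_{\varnothing\ne S\subseteq X_n}$, with $e_1=e_2=1$ and $e_s=2$ for $3\leq s\leq n$. With these exponents,
\[
 b_n=e_n\prod_{s=1}^{n-1}(1+e_s)=2\cdot2\cdot2\cdot3^{n-3}=8\cdot3^{n-3},
\]
which is exactly the claimed nilpotence degree. The claim $\deg\Hilb(\SC_n;t)\leq8\cdot3^{n-3}-1$ and finite dimensionality then follow, since $\SC_n$ is generated in degree one. The argument makes no use of a group action or of the characteristic, so it holds over every field.

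Compatibility with restriction is immediate from \eqref{eq:restrictionMR}: $\pi_{Y,S}(\SI_Y)=\SI_S$, so in particular $\SI_S\subseteq\pi_{Y,S}(\SI_Y)$. For $s\geq3$ the hypothesis $K_{S,s}^2\subseteq\SI_S$ follows from Lemma~\ref{lem:SI}. Apply the lemma to the alphabet $S$ with its inherited order: a product $uv$ of two full-support words on $S$ equals $F_{\std(uv)^{-1}}(S)\in\FQSym_S^+$. Products $uv$ span $K_{S,s}^2$, because $K_{S,s}$ is spanned by full-support words, so this gives the inclusion. Note that Lemma~\ref{lem:SI} is stated for $X_n$, but its proof only uses an ordered alphabet, so it transfers verbatim to $S$.

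The remaining work is the small alphabets, where I need the stronger statement $e_1=e_2=1$; these cases also give the explicit descriptions of $\SC_1$ and $\SC_2$.
\begin{itemize}
\item For $|S|=1$, say $S=\{x\}$, every word is $x^m=F_{12\cdots m}(S)$. Hence $K_{S,1}=(R_S)_+\subseteq\SI_S$, and $\SC_1=\kk$.
\item For $|S|=2$, I will compute the low-degree relations directly. The linear relation $F_1=x_1+x_2$ lets us eliminate $x_2=-x_1$. Then the degree-two elements $F_{12}$ and $F_{21}$ become explicit multiples of $x_1^2$. From this I read off that every full-support word on $S$ lies in $\SI_S$, and I identify the quotient with the description stated for $\SC_2$.
\end{itemize}

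The main thing to be careful about is this two-letter case. The bound $8\cdot3^{n-3}$ needs $e_2=1$, so every single full-support word on two letters must already be a relation, not merely a product of two of them. The convenient route is the following. If the two-letter quotient vanishes in positive degree above the relevant range, then $K_{S,2}\subseteq\SI_S$ follows by checking the finitely many short full-support words (there is essentially only $x_1x_2$ and $x_2x_1$ in degree $2$, modulo the linear relation). If that check failed, I would fall back to $e_2=2$, which would only give the weaker bound $12\cdot3^{n-3}$. So the explicit degree-$\le2$ computation on two letters is where I would spend the effort. Everything else is a direct substitution into Theorem~\ref{thm:general}.
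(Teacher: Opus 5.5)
Your proposal is correct and follows the paper's proof. It uses compatibility from \eqref{eq:restrictionMR}, $e_s=2$ for $s\geq3$ from Lemma~\ref{lem:SI} applied to each ordered subalphabet, and $e_1=e_2=1$ from the one- and two-letter quotients, which gives $b_n=8\cdot3^{n-3}$ in Theorem~\ref{thm:general}. The two-letter check you defer is immediate, as in the paper: $F_{21}(S)=x_2x_1$ together with $x_2=-x_1$ forces $x_1^2=0$, so every word of degree at least $2$, and in particular every full-support word, lies in $\SI_S$; since $F_1$ is the only linear relation, $\SC_2\cong\kk[t]/(t^2)$.
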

\begin{proof}
On one letter, $F_1=x_1$ gives $\SC_1=\kk$. On two letters,
$F_1=x_1+x_2$ and $F_{21}=x_2x_1$ give $x_2=-x_1$ and $x_1^2=0$;
since the only linear relation is $F_1$, this gives
$\SC_2\cong\kk[t]/(t^2)$. Thus we may take $e_1=e_2=1$.
Lemma~\ref{lem:SI} gives $K_S^2\subseteq\SI_S$ for every subalphabet
$S$, with $K_S$ as in Theorem~\ref{thm:B}. Together with
\eqref{eq:restrictionMR}, Theorem~\ref{thm:B} with $e_s=2$ for
$s\geq3$ yields the stated bound.
\end{proof}

\subsection{Alphabet realizations and Hopf structure}\label{ssec:hopf}
The restriction property extends beyond quasisymmetric functions.
Suppose a connected graded algebra $H$ is realized by bounded-degree
series on a countable alphabet, with specializations $H_S\subseteq R_S$
such that $\pi_{Y,S}(H_Y)=H_S$. Then

\[
 \pi_{Y,S}(\inv{H_Y^+})=\inv{H_S^+}.
\]
Thus the restriction hypothesis is built into the finite-alphabet
realizations with this specialization property. To apply
Theorem~\ref{thm:general}, it remains to bound the nilpotence of the
full-support ideal. Section~\ref{sec:r-ncqsym} carries this out for
$r$-quasisymmetric functions.

Such realizations often carry a Hopf structure as well. Assume that
evaluation of each element is compatible with restriction,
order-preserving relabelling, and iterated ordered alphabet sums.
Take disjoint
alphabets $Y,Z$, ordered with every letter of $Y$ before every letter
of $Z$ when order matters. Evaluate the alphabet sum in a tensor
product: letters within each alphabet remain free, while letters of
$Y$ commute with letters of $Z$. If this evaluation
has a finite decomposition
\[
 h(Y+Z)=\sum_i h_i^{(1)}(Y)h_i^{(2)}(Z),\qquad
 h_i^{(1)},h_i^{(2)}\in H,
\]
define $\Delta h=\sum_i h_i^{(1)}\otimes h_i^{(2)}$.
Evaluation is multiplicative. Splitting into three alphabets gives coassociativity, evaluation on
the empty alphabet gives the counit, and connectedness supplies the
antipode. For example, in $\NCQSym$, the formula is
\cite[Section 5.2]{BZ09}
\[
 \Delta M_{(A_1,\ldots,A_r)}
 =\sum_{i=0}^r
 M_{\operatorname{std}(A_1,\ldots,A_i)}\otimes
 M_{\operatorname{std}(A_{i+1},\ldots,A_r)},
\]
where standardization relabels the positions increasingly, and the
empty ordered set partition represents $1$. The infinite-alphabet
algebras $\NCSym$, $\FQSym$, and $\NCQSym$ have this structure; their individual
finite-alphabet specializations need not inherit a coproduct.
Our finiteness proofs use only multiplication and restriction.

There is nevertheless a natural connection to the quotients. For
$A_S=R_S/\inv{H_S^+}$ and finite $Y,Z$, alphabet splitting induces a
surjective algebra map
\begin{equation}\label{eq:quotient-alphabet-splitting}
 A_{Y+Z}\twoheadrightarrow A_Y\otimes A_Z,
 \qquad y\longmapsto y\otimes1,\quad z\longmapsto1\otimes z.
\end{equation}
Indeed, every coproduct term of a positive-degree element has a
positive-degree factor in $H$, so the defining relations map to zero.
In particular, for $A=C,T,\SC$, these maps give
$\nu(A_{p+q})\geq\nu(A_p)+\nu(A_q)-1$ whenever the quotients are
finite dimensional. To obtain upper bounds, one would also need
information about their kernels.

\begin{question}\label{q:hopf}
Can the kernels of the alphabet-splitting maps
\eqref{eq:quotient-alphabet-splitting} be controlled through the support
filtration, yielding sharper degree bounds or recursive descriptions
of the coinvariant algebras?
\end{question}

\section{Noncommutative \texorpdfstring{$r$}{r}-quasisymmetric coinvariants}\label{sec:r-ncqsym}

We apply the support criterion to the $r$-quasisymmetric functions in
noncommuting variables introduced in~\cite[Sections 8.4 and 13]{ALW24}\footnote{
This also appears in an unpublished manuscript of {F. Hivert~\cite{H05}.}}.
Throughout this section, $r\geq1$ is an integer and $\kk$ is an arbitrary
field, unless otherwise stated.

\subsection{The action and restriction to subalphabets}
For a word $w$, let $a_i(w)$ be the number of occurrences of $x_i$.
The $r$-quasi-symmetrizing action of a simple transposition is
\[
 s_i\mathbin{\star_r}w=
 \begin{cases}
  s_i\cdot w,&a_i(w)<r\text{ or }a_{i+1}(w)<r,\\
  w,&a_i(w)\geq r\text{ and }a_{i+1}(w)\geq r,
 \end{cases}
\]
where $s_i\cdot w$ exchanges every $x_i$ and $x_{i+1}$. Let
$H_n^{(r)}=r\text{-}\NCQSym_n$ denote the invariant space.
The integral product formulas for its orbit-sum basis
 show that $H_n^{(r)}$ is a subalgebra
over every field. Define
\begin{equation}\label{eq:r-coinvariants}
 J_n^{(r)}=\inv{(H_n^{(r)})_+},\qquad
 T_n^{(r)}=R_n/J_n^{(r)}.
\end{equation}
The ideal is homogeneous and two-sided. For $r=1$, these are $J_n$
and $T_n$. Moreover,
\[
 \NCSym_n\subseteq H_n^{(r+1)}\subseteq H_n^{(r)}
 \subseteq\NCQSym_n,
\]
and the degreewise limit as $r\to\infty$ is $\NCSym_n$.
When $\operatorname{char}(\kk)\nmid n!$, the resulting surjection
$C_n\twoheadrightarrow T_n^{(r)}$ supplies the bounds of
Corollary~\ref{thm:finite} and Proposition~\ref{prop:alternating}.
The support criterion will give bounds exponential in $n$ for every
fixed $r$, over arbitrary fields.

For any ordered subalphabet $S$, define $H_S^{(r)}$ and $J_S^{(r)}$
using its inherited order.

\begin{lemma}\label{lem:r-restriction}
For every nonempty $S\subseteq Y$, specialization to $S$ satisfies
\[
 \pi_{Y,S}(H_Y^{(r)})=H_S^{(r)},\qquad
 \pi_{Y,S}(J_Y^{(r)})=J_S^{(r)}.
\]
\end{lemma}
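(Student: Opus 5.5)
We will prove the statement by describing an explicit orbit-sum basis of $H_Y^{(r)}$ and showing that $\pi_{Y,S}$ maps it onto the orbit-sum basis of $H_S^{(r)}$, with the remaining basis elements sent to $0$. Throughout, identify $S$ with its inherited order.

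\emph{Step 1: orbit-sum basis.} The $\star_r$-action of the simple transpositions permutes words and preserves degree. A polynomial is $H_Y^{(r)}$-invariant exactly when its coefficients are constant on the equivalence classes generated by these moves. Hence $H_Y^{(r)}$ has as a basis the class sums $O_Y(w)=\sum_{w'\sim w}w'$. We describe each class by a canonical datum: the word pattern (an ordered set partition recording equal-letter positions) together with the multiplicity vector $(a_1,\dots,a_n)$ in the variable order. A move $s_i$ swaps the letter labels $x_i$ and $x_{i+1}$ when one of them has fewer than $r$ occurrences; in particular it may move a letter with $a=0$ past any letter. We claim that two words are equivalent if and only if they have the same pattern and the same sequence of letters with $a\ge r$ in relative order. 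We also claim that the letters with $0<a<r$ can be moved freely between the positions left over, in the sense of their relative order with respect to the "heavy" letters and each other. Note that two light letters can always swap. Concretely, the class of $w$ consists of all words $w'$ obtained from $\std$-type relabellings $\phi$ of $\supp(w)$ into $Y$ that preserve the relative order of the heavy letters.

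\emph{Step 2: restriction.} $\pi_{Y,S}$ kills every word whose support is not contained in $S$. For $w$ with $\supp(w)\subseteq S$, consider $\pi_{Y,S}(O_Y(w))$. It is the sum of the words $w'\sim w$ whose support lies in $S$. By the description in Step 1, this set equals the $\star_r$-class of $w$ computed inside $S$, since the condition "heavy letters in the same relative order" is intrinsic and the inherited order on $S$ is the restriction of that on $Y$. Thus $\pi_{Y,S}(O_Y(w))=O_S(w)$. Every class in $S^*$ is obtained this way, which gives $\pi_{Y,S}(H_Y^{(r)})=H_S^{(r)}$.

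\emph{Step 3: ideals.} $\pi_{Y,S}$ is a surjective algebra homomorphism $R_Y\to R_S$. It maps the two-sided ideal generated by $(H_Y^{(r)})_+$ onto the two-sided ideal generated by $\pi_{Y,S}((H_Y^{(r)})_+)=(H_S^{(r)})_+$. This gives $\pi_{Y,S}(J_Y^{(r)})=J_S^{(r)}$.

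\emph{Main obstacle.} The main difficulty is the orbit characterization in Step 1. We must prove that any order-preserving (on heavy letters) relabelling is reachable by adjacent $\star_r$-moves, and conversely that such moves never reorder heavy letters. The converse is immediate from the definition. For reachability, we plan an induction that bubbles light and absent letters past neighbours using moves $s_i$ with $\min(a_i,a_{i+1})<r$. Only the relative-order-on-$S$ compatibility is needed for Step 2, so it would also suffice to prove directly the following: if $w\sim w'$ in $Y$ with both supports in $S$, then they are connected through words supported in $S$. This could be arranged by routing moves through the absent letters of $S$ in the inherited order.
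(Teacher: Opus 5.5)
Your proposal is correct and follows essentially the same route as the paper: you characterize the $\star_r$-orbits by the equal-letter set partition of positions together with the relative order of the blocks of size at least $r$ (your ``Concretely'' sentence, justified by a bubble-sort argument that never swaps two heavy blocks), use orbit sums as a basis over any field, observe that $\pi_{Y,S}$ sends each orbit sum to the orbit sum with the same data on $S$ or to zero, and pass to the ideals by surjectivity of $\pi_{Y,S}$. One slip to fix: the multiplicity vector and the \emph{ordered} set partition are not orbit invariants, since blocks can move to new labels and light blocks can pass other blocks, so the invariant must be the unordered set partition of positions plus the order of the heavy blocks, exactly as in your relabelling description.
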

\begin{proof}
An orbit of words is determined by the partition of their positions
into equal-letter blocks, together with the order, by variable index,
of the blocks of size at least $r$. Indeed, adjacent labels may be
exchanged unless both correspond to such blocks. The smaller blocks
and unused labels may move freely, while the relative order of the
larger blocks is preserved. These allowed exchanges connect all
assignments with the stated data.

The invariant polynomials have a basis consisting of the sums of
distinct words in each orbit. These are the finite-alphabet
$r$-dominant monomial functions of~\cite[Section 8.4]{ALW24}.
Setting variables outside $S$ to zero in an orbit sum on $Y$ leaves the
orbit sum with the same block data on $S$, or zero if there are too
many blocks. Every orbit sum on $S$ occurs in this way. This proves
the first equality over any field. Specialization is a surjective
graded algebra homomorphism, so it also gives the asserted equality
of the generated ideals.
\end{proof}

\subsection{Nilpotence of full-support words}
A word containing every variable at least $r$ times is fixed by every
simple transposition, and hence belongs to $J_n^{(r)}$. A product of
$r$ full-support words has this property. Thus the full-support ideal
$K_{n,n}$ satisfies
\begin{equation}\label{eq:r-terminal}
 K_{n,n}^{\,r}\subseteq J_n^{(r)}.
\end{equation}
This supplies the remaining hypothesis of Theorem~\ref{thm:general}.

\begin{theorem}\label{thm:r-bound}
For every $r\geq1$, the algebras $T_n^{(r)}$ are finite dimensional
over every field. More precisely, $T_1^{(r)}=\kk$, and for $n\geq2$,
\[
 \bigl((T_n^{(r)})_+\bigr)^{\,2r(r+1)^{n-2}}=0.
\]
In particular, $\deg\Hilb(T_n^{(r)};t)\leq2r(r+1)^{n-2}-1$.
\end{theorem}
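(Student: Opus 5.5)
We apply Theorem~\ref{thm:general} to the family $(J_S^{(r)})_{\varnothing\neq S\subseteq X_n}$. Lemma~\ref{lem:r-restriction} gives $\pi_{Y,S}(J_Y^{(r)})=J_S^{(r)}$, so the family is compatible with restriction, even with equality. By \eqref{eq:r-terminal}, applied to each ordered subalphabet $S$ with its inherited order, $K_{S,|S|}^{\,r}\subseteq J_S^{(r)}$. Hence we may take $e_s=r$ for every $s$, which alone gives the exponent $r(1+r)^{n-1}$.

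The stated bound $2r(r+1)^{n-2}$ is smaller. We obtain it by improving the small-alphabet terms and then using the recurrence \eqref{eq:support-recurrence} with these improved vanishing degrees, as remarked after Theorem~\ref{thm:general}.

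First, for $n=1$ the action is trivial, so every word $x_1^d$ with $d\geq1$ is invariant. Thus $T_1^{(r)}=\kk$, and $e_1=b_1=1$. Then \eqref{eq:support-recurrence} with $e_2=r$ gives $b_2=r(1+b_1)=2r$. For $n\geq3$, the recurrence with $e_s=r$ yields
\[
1+\sum_{s<n}b_s=(1+b_1)\prod_{s=2}^{n-1}(1+r)=2(r+1)^{n-2},
\]
so $b_n=2r(r+1)^{n-2}$. This formula is also valid at $n=2$. The only work is to check $b_1=1$ carefully. The word $x_1^d$ forms its own orbit on a one-letter alphabet, since no simple transposition exists there. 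Hence $x_1\in (H_{\{x_1\}}^{(r)})_+$, and $L_S=(R_S)_+$ for singletons. That is the hypothesis $K_{S,1}^{1}\subseteq L_S$.

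Finally, the conclusion $(R_X/L_X)_+^{b_n}=0$ gives finite dimensionality. Since $T_n^{(r)}$ is generated in degree one, $(T_n^{(r)})_+^{b_n}$ contains all degrees $\geq b_n$, which yields the degree bound on $\Hilb$.

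There is no real obstacle. The one subtle point is that \eqref{eq:r-terminal} must be applied on each subalphabet $S$ rather than only on $X_n$. This holds because a word using every letter of $S$ at least $r$ times is fixed by all simple transpositions of the ordered alphabet $S$.
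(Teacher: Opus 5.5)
Your proposal is correct and is the paper's proof: compatibility comes from Lemma~\ref{lem:r-restriction}, $e_1=1$ because every word on a singleton is invariant, and $e_s=r$ for $s\geq2$ comes from \eqref{eq:r-terminal} on each subalphabet, so Theorem~\ref{thm:general} gives $b_n=r\cdot2\,(r+1)^{n-2}$. Your detour through the recurrence \eqref{eq:support-recurrence} is equivalent to substituting $e_1=1$ directly into the product formula.
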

\begin{proof}
For a singleton alphabet, every word is invariant, so $T_1^{(r)}=\kk$
and we may take $e_1=1$. Lemma~\ref{lem:r-restriction} gives compatibility
with restriction, and~\eqref{eq:r-terminal} permits $e_s=r$ for $s\geq2$.
Theorem~\ref{thm:general} therefore gives, for $n\geq2$,
\[
 b_n=e_n\prod_{s<n}(1+e_s)=2r(r+1)^{n-2}.
\]
\end{proof}

For $r=1$, this recovers the bound $2^{n-1}$ for $T_n$.
Sharper estimates on two and three variables improve
the bound for all larger alphabets.

\begin{corollary}\label{cor:r-char-zero}
For $n=2$, we have
\[
 T_2^{(r)}\cong\begin{cases}
 {\kk[t]/(t^2),} & \operatorname{char}(\kk)\neq 2,\\
 {\kk[t]/(t^{2r}),} & \operatorname{char}(\kk)=2.
 \end{cases}
\]
For $n\geq3$, for any field $\kk$,
\[
 (T_n^{(r)})_d=0,\qquad d\geq4r(r+1)^{n-3}.
\]
\end{corollary}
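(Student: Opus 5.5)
The two-variable part is an explicit computation, and the bound for $n\geq3$ comes from inserting sharper low-rank vanishing degrees into the support recurrence~\eqref{eq:support-recurrence}.

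\emph{Two variables.} The orbit sum of $x_1$ is $x_1+x_2$, so it lies in $J_2^{(r)}$. Hence $T_2^{(r)}$ is the quotient of $R_2/\inv{x_1+x_2}\cong\kk[t]$ by the image of $J_2^{(r)}$ under $\varphi:x_1\mapsto t$, $x_2\mapsto -t$. By Lemma~\ref{lem:r-restriction} and its proof, $H_2^{(r)}$ has a basis of orbit sums, so it suffices to evaluate $\varphi$ on them. Let $w$ be a word with $a_1$ letters $x_1$ and $a_2$ letters $x_2$, and put $d=a_1+a_2$.
\begin{itemize}
\item If $a_1<r$ or $a_2<r$, the orbit is $\{w,s_1\cdot w\}$. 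It has two elements unless $w$ is empty, and its image is $\bigl((-1)^{a_2}+(-1)^{a_1}\bigr)t^d$.
\item If $a_1,a_2\geq r$, then $w$ is fixed and $\varphi(w)=\pm t^d$.
\end{itemize}
When $\operatorname{char}(\kk)\neq2$, the orbit $\{x_1^2,x_2^2\}$ when $r\geq2$, or the fixed word $x_1x_2$ when $r=1$, maps to a nonzero multiple of $t^2$. Since $\nu\geq2$ because $t$ itself survives, we get $T_2^{(r)}\cong\kk[t]/(t^2)$. When $\operatorname{char}(\kk)=2$, every two-element orbit maps to $2(\pm t^d)=0$. The fixed words have degree $\geq2r$, and $(x_1x_2)^r$ is one of them, so the image ideal is exactly $(t^{2r})$.

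\emph{Three or more variables.} I rerun the proof of Theorem~\ref{thm:general} with $e_s=r$ for $s\geq3$, justified by~\eqref{eq:r-terminal} and Lemma~\ref{lem:r-restriction}. The goal is the uniform inclusion
\[
 A_+^{4}\subseteq F_3,\qquad A=T_n^{(r)},\ n\geq3,
\]
in every characteristic. Granting it, the lifting step gives $F_s^{1+r}\subseteq F_{s+1}$, as at the end of the proof of Proposition~\ref{prop:seven-bound}: one full-support factor supplies the context for the relation $K_{S,s}^r\subseteq J_S^{(r)}$ formed by the next $r$ factors. Hence $A_+^{4(r+1)^{k}}\subseteq F_{3+k}$ and $F_n^r=0$, which gives vanishing from degree $4r(r+1)^{n-3}$.

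When $\operatorname{char}(\kk)\neq2$, the inclusion is just Lemma~\ref{lem:filtration} with $b_1=1$ and $b_2=2$. Indeed, $1+b_1+b_2=4$.

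\emph{Main obstacle: characteristic $2$.} Here $b_2=2r$ is too large, so I must prove $F_2F_1^2\subseteq F_3$ directly with the two-sided lifting~\eqref{eq:context-lift}. Take a word of degree $4$ supported on a pair $S=\{a,b\}$, with $S=\supp(u)$ for a prefix $u$ of length at least $2$; the goal is to show it lies in $F_3$. The available relations are:
\begin{itemize}
\item the linear relation $a+b$, which may replace any letter by the other as long as the remaining context still contains both letters;
\item the quadratic relations $ab+ba$ and $a^2+b^2$ when $r\geq2$, or the word $ab$ itself when $r=1$, usable inside any subword whose complementary context has support $S$.
\end{itemize}
I would first use the linear relation to normalize the word to one containing $b$ exactly once. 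Then I would apply a quadratic relation across a window whose complement still supplies both letters. Only a handful of shapes occur (essentially $a^3b$, $a^2ba$, $aba^2$, $ba^3$), so a short case check should show that each lies in $F_3$. This is the step I would verify most carefully. If it fails, the fallback is to run the same context-lifting argument at three letters: replace $F_3F_1^{4r}\subseteq F_4$ by a direct check that suitable degree-$4(r+1)$ words land in $F_4$.
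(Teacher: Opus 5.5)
Your two-variable computation is correct. For $n\geq3$ in characteristic $\neq2$ your argument also agrees with the paper: Lemma~\ref{lem:filtration} with $b_1=1$, $b_2=2$, followed by $F_s^{1+r}\subseteq F_{s+1}$, is just the recurrence $b_3=4r$, $b_{n+1}=(r+1)b_n$.

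\textbf{The gap is in characteristic $2$.} The inclusion $A_+^4\subseteq F_3$, on which your argument rests, is false there as soon as $r\geq3$. Let $\varphi:R_n\to\kk[t]$ send $x_1,x_2\mapsto t$ and all other letters to $0$.
\begin{itemize}
\item $\varphi$ kills every word of support at least $3$, hence kills $F_3$.
\item On an orbit sum from Lemma~\ref{lem:r-restriction}, only the words labelled by $x_1,x_2$ survive. With one block, or with two blocks of which at most one has size $\geq r$, there are exactly two such words, so the image is $2t^d=0$.
\item Only orbit sums with exactly two blocks, both of size $\geq r$, have nonzero image, and these have degree $\geq2r$.
\end{itemize}
Hence $\varphi$ kills $(J_n^{(r)})_d$ for $d<2r$, so $x_1^d\notin F_3$ for every $d<2r$. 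In particular $x_1^4\notin F_3$ once $r\geq3$, and no case check of degree-four words can succeed.

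You can also see this locally. In characteristic $2$, $a^2+b^2=a(a+b)+(a+b)b$ and $ab+ba=a(a+b)+(a+b)a$. So the quadratic relations you list lie in $\inv{a+b}$ and add nothing beyond the linear relation under context-lifting. This also leaves $n=3$ unproved, since you derived it from $A_+^4\subseteq F_3$ and $F_3^r=0$.

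\textbf{The missing input is an explicit three-letter computation.} Your fallback points at the right target, $A_+^{4(r+1)}\subseteq F_4$, but gives no argument for it, and it does not cover $n=3$. The paper computes $T_3^{(r)}$ directly in characteristic $2$:
\begin{itemize}
\item $p_1,p_2,p_3$ give $x_3=x_1+x_2$, $x_1x_2=x_2x_1$ and $x_1^2x_2=x_1x_2^2$. Hence every mixed monomial of degree $d\geq2$ equals $x_1^{d-1}x_2$.
\item The orbit sum $x_1^rx_2^r+x_1^rx_3^r+x_2^rx_3^r$ then reduces to $x_1^{2r}+x_2^{2r}+x_1^{2r-1}x_2$.
\item Multiplying by $x_1$ and by $x_2$ gives $x_1^{2r+1}=x_2^{2r+1}=0$, so $(T_3^{(r)})_{2r+2}=0$.
\end{itemize}
Thus $b_3=2r+2\leq4r$ settles $n=3$. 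With $b_1=1$ and $b_2=2r$ one gets $1+b_1+b_2+b_3=4(r+1)$, the same value as in characteristic $\neq2$. Therefore $A_+^{4(r+1)}\subseteq F_4$ and $b_4=4r(r+1)$, and the recurrence $b_{n+1}=(r+1)b_n$ finishes the proof.

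The point is that the larger two-letter exponent $b_2=2r$ is absorbed by a smaller three-letter exponent, not at the two-letter level as you attempted.
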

\begin{proof}
When $\operatorname{char}(\kk)\neq2$, the symmetric polynomials
$x_1+x_2$ and $x_1^2+x_2^2$ belong to $H_2^{(r)}$. In the quotient
they give $x_2=-x_1$ and $2x_1^2=0$. The map $x_1\mapsto t$,
$x_2\mapsto-t$ yields the stated isomorphism. Thus we may take
$b_1=1$ and $b_2=2$. Using~\eqref{eq:support-recurrence} with
$e_n=r$ for $n\geq3$ gives $b_3=4r$ and
$b_{n+1}=(r+1)b_n$ for $n\geq3$, hence the result.

When $\operatorname{char}(\kk)=2$, the symmetric polynomial $x_1+x_2$
gives $x_1=x_2$ in the quotient. If a nonempty word $w$ contains fewer
than $r$ occurrences of $x_1$ or of $x_2$, its $\star_r$-orbit consists
of $w$ and $s_1\star_r w$, whose sum vanishes upon setting $x_1=x_2$.
Otherwise, $w$ itself belongs to $H_2^{(r)}$ and has length at least
$2r$. Conversely, $x_1^rx_2^r\in H_2^{(r)}$. Therefore, the map
$x_1\mapsto t$, $x_2\mapsto t$ yields the stated isomorphism.

We may therefore take $b_1=1$ and $b_2=2r$. For $n=3$, the symmetric
relations give
\[
 \begin{cases}
 x_1+x_2+x_3=0,\\
 x_1^2+x_2^2+x_3^2=0,\\
 x_1^3+x_2^3+x_3^3=0
 \end{cases}
 \implies
 \begin{cases}
 x_3=x_1+x_2,\\
 x_1x_2=x_2x_1,\\
 x_1^2x_2=x_1x_2^2.
 \end{cases}
\]
In particular, $x_1^ix_2^{d-i}=x_1^{d-1}x_2$ for $d\geq2$ and
$1\leq i\leq d-1$. By the $r$-quasisymmetric relations,
\[
 \begin{aligned}
 x_1^{2r}+x_2^{2r}+x_1^{2r-1}x_2
 &=x_1^rx_2^r+(x_1^r+x_2^r)(x_1+x_2)^r\\
 &=x_1^rx_2^r+x_1^rx_3^r+x_2^rx_3^r=0.
 \end{aligned}
\]
Multiplying by $x_1$ or $x_2$ gives $x_1^{2r+1}=x_2^{2r+1}=0$.
In degree $2r+2$, pure powers vanish and every mixed monomial reduces
to $x_1^{2r+1}x_2=0$, so we may take $b_3=2r+2$.
Since $2r+2\leq4r$, this proves the asserted bound for $n=3$.

Finally, $b_4=r(1+b_1+b_2+b_3)=4r(r+1)$, and the recurrence
$b_{n+1}=(r+1)b_n$ for $n\geq4$ yields the result.
\end{proof}

\section{Examples and lower bounds}\label{sec:sharp}

The small-rank computations and symmetric bounds in
Sections~\ref{ssec:small-rank} and~\ref{ssec:general-lower} are in
characteristic zero. Section~\ref{ssec:ordered-lower} gives lower
bounds over every field.

\subsection{The coinvariant algebras in small rank}\label{ssec:small-rank}
We computed the $t$-graded Frobenius characteristic
$\mathcal F_n(t)$ of $C_n$ for $n\leq4${, suppressing $\mathbf z$}.
These computations independently verify the Hilbert series
of~\cite[Section 4.1]{ABL10}.
\begin{center}
\begin{tabular}{clrr}
\toprule
$n$ & $\mathcal F_n$& $\dim C_n$ & $\nu(C_n)$\\
\midrule
1 & $s_{1}$ & 1 & 1\\
2 & $s_{2}+s_{11}t$ & 2 & 2\\
3 & $s_{3}+s_{21}t+(s_{21}+s_{111})t^2+(s_{21}+s_{111})t^3$ & 9 & 4\\
4 & $ \mathcal F_4$ & 946 & 10\\
\bottomrule
\end{tabular}
\end{center}
where
\begin{align*}
{ \mathcal F_4}=s_{4} &+ s_{31}t +(s_{31}+s_{22}+s_{211})t^2+(2s_{31}+2s_{22}+3s_{211}+s_{1111})t^3\\
	&+(5s_{31}+4s_{22}+7s_{211}+3s_{1111})t^4+(11s_{31}+9s_{22}+15s_{211}+6s_{1111})t^5\\
	&+(21s_{31}+18s_{22}+29s_{211}+11s_{1111})t^6+(32s_{31}+28s_{22}+46s_{211}+18s_{1111})t^7\\
	&+(22s_{31}+22s_{22}+40s_{211}+18s_{1111})t^8 + {4s_{31}t^9.}\\
\end{align*}
The top-degree component of $C_4$ has Frobenius characteristic
$4s_{31}$ and contains no copy of the alternating representation.
Thus the classical description of the top degree by the sign
representation does not persist.

For five variables, we independently verify the previously known
Hilbert coefficients through degree $6$ and extend the computation
through degree $8$, obtaining
\[
 \Hilb(C_5;t)=1+4t+15t^2+55t^3+199t^4+712t^5
                +2520t^6+8819t^7+30490t^8+O(t^9).
\]

The exact dimension and vanishing degree of $C_5$ remain unknown.
The next subsection gives lower bounds without computing its full
relation matrices.

For the quasisymmetric quotients, our computations give
\begin{center}
\begin{tabular}{clrr}
\toprule
$n$ & $\Hilb(T_n;t)$ & $\dim T_n$ & $\nu(T_n)$\\
\midrule
1 & $1$ & 1 & 1\\
2 & $1+t$ & 2 & 2\\
3 & $1+2t+2t^2$ & 5 & 3\\
4 & $1+3t+7t^2+9t^3$ & 20 & 4\\
5 & $1+4t+14t^2+42t^3+88t^4+28t^5$ & 177 & 6\\
\bottomrule
\end{tabular}
\end{center}
These examples leave a substantial
gap between the exact vanishing degrees and the general upper bounds.

For the standardized coinvariants, we obtain the following complete
Hilbert polynomials:
\begin{center}
\begin{tabular}{clrr}
\toprule
$n$ & $\Hilb(\SC_n;t)$ & $\dim\SC_n$ & $\nu(\SC_n)$\\
\midrule
1 & $1$ & 1 & 1\\
2 & $1+t$ & 2 & 2\\
3 & $1+2t+3t^2+2t^3$ & 8 & 4\\
4 & $1+3t+8t^2+19t^3+36t^4+49t^5+40t^6+13t^7$ & 169 & 8\\
\bottomrule
\end{tabular}
\end{center}
For five variables, the initial coefficients are
\[
 \Hilb(\SC_5;t)=1+4t+15t^2+54t^3+186t^4+598t^5
                    +1736t^6+4360t^7+O(t^8).
\]
The full Hilbert polynomial of $\SC_5$ remains to be determined.

\subsection{General lower bounds for symmetric coinvariants}\label{ssec:general-lower}

Finiteness leaves open how rapidly these algebras grow with the
alphabet. A small auxiliary presentation gives explicit lower bounds
in every rank and a uniform asymptotic estimate.
Put $p_k=\sum_{i=1}^n x_i^k$, let
$V_n=(R_n)_1/\kk p_1$ be the standard representation, and set
\[
 B_n=T(V_n)/\inv{p_2,\ldots,p_{n-1}},\qquad
 s_n(t)=\sum_{k=2}^{n-1}t^k.
\]
Here the $p_k$ denote their images after eliminating $p_1$.

\begin{proposition}\label{prop:general-lower}
In Characteristic zero and for $n\geq3$, the  Hilbert series of the invariant subalgebra $B_n^{S_n}$ is
\begin{equation}\label{eq:general-invariant-series}
 \mathcal H_n(t):=\sum_{d\geq0}\dim(B_n)_d^{S_n}t^d
 =\frac1{n!}\sum_{\sigma\in S_n}
   \frac1{1-(\operatorname{fix}(\sigma)-1)t+s_n(t)},
\end{equation}
where $\operatorname{fix}(\sigma)$ is the number of fixed points of $\sigma$.
Define
\[
 F_n(t)=\mathcal H_n(t)-(n-1)t+s_n(t),\qquad
 F_n(t)^{-1}=\sum_{d\geq0}\ell_{n,d}t^d.
\]
If $\ell_{n,0},\ldots,\ell_{n,D}$ are nonnegative, then
\[
 \dim(C_n)_d\geq\ell_{n,d}{,\qquad0\leq d\leq D}.
\]
\end{proposition}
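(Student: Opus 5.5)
We establish the two assertions in turn: first the Hilbert series of $B_n^{S_n}$, then the lower bound via Golod--Shafarevich.

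\emph{Step 1: Hilbert series of $B_n$ as a graded $S_n$-module.} The key input is that $p_2,\ldots,p_{n-1}$, viewed in $T(V_n)$ after eliminating $p_1$, form a strongly free (inert) set of relations. Their number and degrees give $\Hilb(B_n;t)^{-1}=1-(n-1)t+s_n(t)$. The plan is to prove this equivariantly. There should be an exact sequence of graded $S_n$-modules, the minimal free resolution of $\kk$ over $B_n$,
\[
0\to B_n\otimes W\to B_n\otimes V_n\to B_n\to\kk\to0,
\qquad W=\bigoplus_{k=2}^{n-1}\kk p_k ,
\]
where $W$ carries the trivial action because each $p_k$ is $S_n$-invariant. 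Taking the graded trace of $\sigma$ gives
\[
\sum_d\tr(\sigma\mid(B_n)_d)\,t^d=\bigl(1-\chi_{V_n}(\sigma)t+s_n(t)\bigr)^{-1},
\]
and $\chi_{V_n}(\sigma)=\operatorname{fix}(\sigma)-1$. Averaging over $S_n$ in characteristic zero then yields \eqref{eq:general-invariant-series}.

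\emph{Step 2: inertness.} This is the main obstacle. I would first try Anick's criterion, using a monomial order on a basis of $V_n$ for which the leading words of $p_2,\ldots,p_{n-1}$ have no self-overlaps or mutual overlaps. Eliminating $x_n=-(x_1+\cdots+x_{n-1})$ gives leading terms that are pure powers $x_n^k\mapsto$ leading words. If the leading words overlap, I would change coordinates generically, or instead pass through an equivalent statement: a set of homogeneous elements is strongly free iff the Hilbert series equals $(1-(n-1)t+s_n(t))^{-1}$. One could also verify that the relations become a free-generating set of a free subalgebra after a suitable degeneration and semicontinuity argument. If this fails, a fallback is available: the equality of Hilbert series is not actually needed, only the inequality $\Hilb\geq$ in a form that survives traces. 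That points to an alternative route, namely bounding $C_n$ directly without exactness.

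\emph{Step 3: presenting $C_n$.} Note that $C_n=B_n/\inv{p_n,p_{n+1},\ldots,\ (B_n^{S_n})_+}$. More precisely, $C_n$ is $T(V_n)$ modulo the images of $\NCSym_n^+$. By Reynolds averaging, the ideal generated by $\NCSym_n^+$ in $B_n$ equals the ideal generated by $(B_n^{S_n})_+$. A minimal generating set of this ideal, beyond the relations $p_2,\ldots,p_{n-1}$ already imposed, can be chosen inside $B_n^{S_n}$. Hence $C_n$ has a presentation with $n-1$ generators in degree $1$ and relations in the degrees of the $p_k$ for $2\le k\le n-1$, plus at most $\dim(B_n)^{S_n}_d$ relations in each degree $d$. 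This gives at most the coefficients of $\mathcal H_n(t)$ in total.

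\emph{Step 4: Golod--Shafarevich.} The $\mathcal H_n$ count includes the constant term $1$ and the degree $2,\ldots,n-1$ parts. These are accounted for by $F_n(t)=\mathcal H_n(t)-(n-1)t+s_n(t)$, so that $1-(n-1)t+r(t)$ is coefficientwise at most $F_n(t)$ in relation degrees, where $r(t)$ is the relation series. The subtraction of the $p_k$ appearing inside $\mathcal H_n$ is absorbed because $p_k\in\inv{p_2,\ldots}$ vanish in $B_n$. Then \cite[Theorem 2.1]{Ershov12} states that if $(1-(n-1)t+r(t))^{-1}$ has nonnegative coefficients through degree $D$, then $\dim(C_n)_d\ge$ those coefficients. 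Coefficientwise monotonicity, under the stated nonnegativity of $\ell_{n,0},\ldots,\ell_{n,D}$, gives $\dim(C_n)_d\ge\ell_{n,d}$.
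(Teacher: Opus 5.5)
Your outline (the resolution $0\to B_n\otimes W\to B_n\otimes V_n\to B_n\to\kk\to0$, graded traces, averaging, a presentation of $C_n$ by $p_2,\ldots,p_{n-1}$ plus averaged lifts of a basis of $(B_n^{S_n})_+$, then Golod--Shafarevich) is the paper's. Steps 3 and 4 are fine, and Step 1 is correct once exactness is known. If you take all basis elements, as the paper does, the relation series is exactly $F_n(t)-1+(n-1)t$, so $\Hilb(C_n;t)F_n(t)\geq1$ coefficientwise and no monotonicity step is needed.

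The gap is Step 2. You flag it but do not close it, and the formula \eqref{eq:general-invariant-series} rests entirely on it: you need exactness of the resolution, i.e.\ injectivity of $\delta$, equivalently strong freeness of $p_2,\ldots,p_{n-1}$. Your concrete suggestion fails. After eliminating $x_n$, the leading word of $p_2$ under deglex is $x_1^2$ (coefficient $2$), which overlaps itself, so Anick's combinatorial criterion does not apply.

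No real change of coordinates can repair this. If $y$ is the largest basis letter for an admissible order, then $y^2$ is the largest quadratic word. Writing $x_i=a_iy+\cdots$, its coefficient in $p_2$ is $\sum_i a_i^2>0$. The equivalence ``strongly free iff the Hilbert series is $(1-(n-1)t+s_n(t))^{-1}$'' is circular here. The fallback of avoiding exactness recovers only the second assertion, which indeed does not need it because $F_n$ is built from the actual invariant series. It does not give the formula.

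The missing idea is to pass to $\mathbb Q(\zeta)$, with $\zeta$ a primitive $n$th root of unity, and use the eigenbasis of the $n$-cycle. The substitution $x_i=\sum_{j=1}^{n-1}\zeta^{(i-1)j}y_j$ identifies $T(V_n)$ with the free algebra on $y_1,\ldots,y_{n-1}$. It gives $p_k=n\sum y_{j_1}\cdots y_{j_k}$, summed over $j_1+\cdots+j_k\equiv0\pmod n$.

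For deglex with $y_1>\cdots>y_{n-1}$, the leading word of $p_k$ is $y_1^{k-1}y_{n-k+1}$ for $2\leq k<n$. These words have distinct final letters, all different from $y_1$, while every proper prefix is a power of $y_1$. So there are no overlaps or inclusions, and the diamond lemma gives the normal-word basis of $B_n$.

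For a normal word $u$, the leading term of $\delta(u\otimes p_k)$ is a nonzero multiple of $uy_1^{k-1}\otimes y_{n-k+1}$. These terms are distinct, so $\delta$ is injective. Traces and dimensions are unchanged by the base extension, so your remaining steps then go through as written.
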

\begin{proof}
Work over $\mathbb Q$ and adjoin a primitive $n$th root of unity
$\zeta$. The Fourier coordinates
\[
 x_i=\sum_{j=1}^{n-1}\zeta^{(i-1)j}y_j
\]
identify the quotient by $p_1$ with the free algebra on
$y_1,\ldots,y_{n-1}$. In these coordinates,
\[
 p_k=n\!\!\sum_{\substack{j_1,\ldots,j_k\in[n-1]\\
                         j_1+\cdots+j_k\equiv0\pmod n}}
                    y_{j_1}\cdots y_{j_k}.
\]
For degree-lexicographic order with $y_1>\cdots>y_{n-1}$, the
leading word of $p_k$ is $y_1^{k-1}y_{n-k+1}$ for $2\leq k<n$.
These words have no inclusion or overlap ambiguities: their final
letters are distinct and different from $y_1$, whereas every proper
prefix consists only of $y_1$'s. The words avoiding these factors
therefore form a basis of $B_n$~\cite{Bergman78,Anick82}.

Let $E_n=\bigoplus_{k=2}^{n-1}\mathbb Qp_k$, with the indicated
degrees. Consider the sequence of graded left $B_n$-modules
\begin{equation}\label{eq:general-resolution}
 0\longrightarrow B_n\otimes E_n\xrightarrow{\delta}B_n\otimes V_n
 \xrightarrow{\mu}B_n\xrightarrow{\epsilon}\mathbb Q\longrightarrow0.
\end{equation}
Here $\mu(b\otimes v)=bv$ is multiplication and $\epsilon$ is the
augmentation. The map $\delta$ is obtained by right deconcatenation:
split off the last letter of each monomial in a relation in $T(V_n)$,
then project the remaining prefix to $B_n$. Explicitly,
\[
 \delta(b\otimes p_k)=\sum_{i=1}^n bx_i^{k-1}\otimes x_i,
 \qquad 2\leq k<n,
\]
where the $x_i$ denote the images of $x_i$ in $V_n$ and $B_n$ respectively.
The group $S_n$ acts on the left, diagonally on the tensor products:
\[
 \sigma\cdot(b\otimes v)=(\sigma\cdot b)\otimes(\sigma\cdot v),
 \qquad
 \sigma\cdot(b\otimes p_k)=(\sigma\cdot b)\otimes p_k.
\]
Its actions on $E_n$ and $\mathbb Q$ are trivial. All maps are defined
over $\mathbb Q$ and are $S_n$-equivariant.

We have $\mu\delta(b\otimes p_k)=bp_k=0$. The usual relation sequence
gives $\ker\mu=\operatorname{im}\delta$, and
$\operatorname{im}\mu=(B_n)_+=\ker\epsilon$.
To prove injectivity of $\delta$, use the Fourier coordinates above.
For a normal word $u$, the leading term of $\delta(u\otimes p_k)$
is a nonzero multiple of $uy_1^{k-1}\otimes y_{n-k+1}$.
The first factor remains normal and these terms are distinct,
proving injectivity of $\delta$ and hence exactness
of~\eqref{eq:general-resolution}.

For $\sigma\in S_n$ and a graded $S_n$-module $U$ with finite-dimensional
components, let
\[
 \operatorname{Tr}_U(\sigma;t)=\sum_{d\geq0}\tr(\sigma\big|_{U_d})t^d.
\]
Degree by degree, the Hopf trace formula~\cite[Chap. IV, Thm 23.2]{Bredon93}  on the
resolution~\eqref{eq:general-resolution} gives
\[
 \operatorname{Tr}_{B_n}(\sigma;t)
 \bigl(\operatorname{Tr}_{E_n}(\sigma;t)
       -\operatorname{Tr}_{V_n}(\sigma;t)+1\bigr)-1=0.
\]
Isolating $\operatorname{Tr}_{B_n}(\sigma;t)$ and averaging over $S_n$
gives~\eqref{eq:general-invariant-series} as
\[
 \begin{aligned}
 \mathcal H_n(t)
 &=\frac1{n!}\sum_{\sigma\in S_n}
   \frac1{1-\operatorname{Tr}_{V_n}(\sigma;t)+\operatorname{Tr}_{E_n}(\sigma;t)}
 =\frac1{n!}\sum_{\sigma\in S_n}
   \frac1{1-(\operatorname{fix}(\sigma)-1)t+s_n(t)}.
 \end{aligned}
\]

Since $p_1$ is invariant, Reynolds averaging lifts every invariant in
$T(V_n)=R_n/\inv{p_1}$ to an invariant in $R_n$. Hence
\[
 C_n=T(V_n)/\inv{T(V_n)^{S_n}_+}.
\]
Choose a homogeneous basis of $(B_n^{S_n})_+$. Averaging a homogeneous
lift of each basis element to $T(V_n)$ gives an invariant lift of the
same element. These lifts, together with $p_2,\ldots,p_{n-1}$, generate
the full invariant ideal. Indeed, from any positive-degree homogeneous
invariant in $T(V_n)$, subtract a linear combination of the chosen lifts
with the same image in $B_n$. The difference belongs to the kernel
$\inv{p_2,\ldots,p_{n-1}}$ of $T(V_n)\to B_n$.
Thus we obtain a presentation of $C_n$ with $n-1$ generators of degree
one and, in each degree $j\geq1$, the following number of relations:
\[
 r_{n,j}=\dim(B_n)_j^{S_n}+[t^j]s_n(t)
        =[t^j](\mathcal H_n(t)+s_n(t)-1).
\]
The first term counts the chosen invariant lifts, and the second counts
the relations $p_2,\ldots,p_{n-1}$. Since
\[
 1-(n-1)t+\sum_{j\geq1}r_{n,j}t^j
 =\mathcal H_n(t)-(n-1)t+s_n(t)=F_n(t),
\]
the graded Golod--Shafarevich inequality~\cite[Theorem 2.1]{Ershov12}
gives
\begin{equation}\label{eq:general-GS}
 \Hilb(C_n;t)F_n(t)=1+Q_n(t),
\end{equation}
{coefficientwise,} for some $Q_n(t)=\sum_{j\geq1}q_{n,j}t^j$, with $q_{n,j}\geq0$.
Multiplying~\eqref{eq:general-GS} by $F_n(t)^{-1}$ and comparing
coefficients gives, for $0\leq d\leq D$,
\[
 \dim(C_n)_d=\ell_{n,d}
       +\sum_{j=1}^d q_{n,j}\ell_{n,d-j}\geq\ell_{n,d},
\]
because every $\ell_{n,d-j}$ in the sum lies in the assumed
nonnegative range. This proves the asserted lower bounds.
\end{proof}

The coefficients are obtained by the integer recurrence
\begin{equation}\label{eq:general-reciprocal-recurrence}
 \ell_{n,0}=1,\qquad
 \ell_{n,d}=(n-1)\ell_{n,d-1}
           -\sum_{j=1}^d r_{n,j}\ell_{n,d-j}.
\end{equation}
Grouping permutations by their number $f$ of fixed points, with
multiplicity $\binom nf d_{n-f}$ where $d_j$ counts derangements,
makes~\eqref{eq:general-invariant-series} explicit. The initial
positive coefficients give, for example,
\begin{center}
\begin{tabular}{crr}
\toprule
$n$ & lower bound for $\nu(C_n)$ & lower bound for $\dim C_n$\\
\midrule
6 & $40$ & $10^{25}$\\
7 & $103$ & $10^{75}$\\
8 & $285$ & $10^{234}$\\
9 & $828$ & $10^{737}$\\
10 & $2\,455$ & $10^{2323}$\\
\bottomrule
\end{tabular}
\end{center}
The dimension bounds are rounded down from the sums of these
coefficients. The first negative reciprocal coefficient ends this
calculation; it does not assert vanishing in that degree.

\begin{theorem}\label{thm:symmetric-growth}
As $n\to\infty$, in characteristic zero,
\[
 \nu(C_n)\geq(1-o(1))n^5\log n,\qquad
 \log\dim C_n\geq(1-o(1))n^5(\log n)^2.
\]
\end{theorem}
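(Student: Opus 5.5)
The plan is to run the recurrence~\eqref{eq:general-reciprocal-recurrence} from Proposition~\ref{prop:general-lower} asymptotically. I do not compute $\ell_{n,d}$ directly. Instead I locate the zeros of $F_n(t)$ nearest the origin and read off the sign pattern of the coefficients of $F_n(t)^{-1}$ from them. The target is to show that $\ell_{n,d}>0$ for all $d\leq D_n$, with $D_n=(1-o(1))n^5\log n$, and that $\ell_{n,d}=(n-1-o(1))^d$ in this range. Both claims then follow at once. The vanishing degree satisfies $\nu(C_n)>D_n$. Summing, $\log\dim C_n\geq\log\ell_{n,D_n}\geq(1-o(1))D_n\log n=(1-o(1))n^5(\log n)^2$.

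\emph{Step 1: make $F_n$ explicit.} Group the permutations in~\eqref{eq:general-invariant-series} by their number of fixed points $f$, with weights $w_f=\binom nf d_{n-f}/n!\approx 1/(e\,(n-f)!)$. This writes
\[
 F_n(t)=G_n(t)+\sum_{f}w_f\bigl(G_{f}(t)^{-1}-1\bigr),
\]
where $G_f(t)=1-(f-1)t+s_n(t)$ and $G_n$ is the term coming from $\sigma=\mathrm{id}$. The smallest positive root $t_0$ of $G_n$ satisfies $t_0=\frac1{n-1}+O(n^{-3})$, and $G_n'(t_0)\approx-(n-1)$. For $f\leq n-1$, each $G_f$ is bounded away from $0$ near $t_0$. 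Consequently, near $t_0$ the only singular contribution is $w_n/G_n=1/(n!\,G_n)$. All other summands are analytic there, with explicitly computable values and derivatives, and these are dominated by the terms with $f=n-2,n-3,\dots$.

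\emph{Step 2: local analysis.} Put $u=G_n(t)$, a local coordinate near $t_0$, and write $c_n(u)$ for the analytic remainder. Near $t_0$ the equation $F_n=0$ becomes $u^2+c_n(u)\,u+1/n!=0$. The key quantity is the size and sign of $c_n$ at $u=0$.

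If $c_n$ were bounded below by a positive constant, the zeros would be real and no sign change would occur near $t_0$. So I expect the real content to be a near-cancellation. At $t_0$, the constant terms $-w_f$ and the values $w_f/G_f(t_0)$ should almost balance, leaving $c_n(0)$ small, possibly of size a power of $n$ times $t_0^{\,k}$. The conjugate pair of zeros $t_\pm=t_0(1+\rho e^{\pm i\theta_n})$ then has argument $\theta_n$. The coefficients behave like $\ell_{n,d}\approx A\,|t_\pm|^{-d}\cos(d\theta_n-\phi)$, up to exponentially smaller contributions from zeros farther out. These stay positive until $d\approx(\pi/2-\phi)/\theta_n$.

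The claim to establish is therefore $\theta_n^{-1}=(1+o(1))\,\kappa\,n^5\log n$ for the appropriate constant $\kappa$, together with control of the phase $\phi$. The $\log n$ factor should arise because the balancing involves $t_0^{\,n-1}$ type terms from $s_n$, which forces a logarithmic correction when solving for the zero.

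\emph{Step 3: make the asymptotics rigorous.} I will apply a transfer or Rouché-type argument on a circle of radius slightly larger than $|t_\pm|$. The aim is to show that $F_n$ has exactly these two zeros inside, and that every other zero has modulus at least $|t_\pm|(1+\epsilon_n)$, with $\epsilon_n D_n\to\infty$. Cauchy's formula then gives
\[
 \ell_{n,d}=2\,\mathrm{Re}\bigl(\mathrm{Res}\,t_+^{-d-1}\bigr)+O\bigl(|t_\pm|^{-d}(1+\epsilon_n)^{-d}\bigr),
\]
uniformly for $d\leq D_n$. This yields positivity and the growth rate $|t_\pm|^{-1}=n-1+O(1/n)$.

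\emph{Main obstacle.} I expect the hard part to be Step~2: computing $c_n(0)$ precisely enough to exhibit the cancellation and pin down $\theta_n$ to leading order. This needs sharp asymptotics for $\sum_f w_f/G_f(t_0)$, which involves derangement numbers and the near-$t_0$ values of $s_n$. My first attempt would be to expand $1/G_f(t_0)$ in powers of $(n-f)/(n-1)$, sum against $w_f$ using $\sum_j t^j/j!=e^t$, and identify which order survives the cancellation. If the leading orders do not cancel as hoped, the same computation still yields whatever lower bound the zero geometry allows. The stated exponent $n^5\log n$ is what I would be trying to match.
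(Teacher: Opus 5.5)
Your proposal has a genuine gap. The zero geometry you posit in Step~2 is not what occurs, and the estimate that actually produces $n^5\log n$ is missing.

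\emph{The weights and the size of $c_n$.} First, $\binom nf d_{n-f}/n!=d_{n-f}/(f!\,(n-f)!)\approx 1/(e\,f!)$, essentially the Poisson law of fixed points. So the analytic part of $F_n$ near $t_0$ is governed by permutations with few fixed points. It is not governed by $f=n-2,n-3,\dots$, whose weights are of order $1/(n-2)!$. Write $\mathcal H_n(t)=1/(n!\,G_n(t))+R(t)$. The moments $0,1,1,4$ of $\operatorname{fix}(\sigma)-1$ give $R(t)=1+t^4+O(t^5)+O(1/n!)$ near $t_0$. Your near-cancellation guess is therefore right in spirit: in your equation $u^2+c_nu+1/n!=0$ one has $c_n(0)=t_0^4(1+o(1))\asymp n^{-4}>0$.

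\emph{The zeros are real, not a conjugate pair.} Since $c_n(0)^2\asymp n^{-8}$ is vastly larger than $4/n!$, both roots are real and negative: $u_1\approx -1/(n!\,c_n)$ and $u_2\approx -c_n$. There is no angle $\theta_n$, so the cosine heuristic does not apply. Your converse claim is also false: a positive $c_n$ does produce a sign change. The two real poles $t_1<t_2$ of $F_n^{-1}$ just beyond $t_0$ contribute with opposite signs. Roughly, with $q=n-1$,
\[
 \ell_{n,d}\approx q^{-1}\bigl(t_2^{-d-1}-(n!\,c_n^2)^{-1}t_1^{-d-1}\bigr),
\]
and $t_2/t_1-1\asymp c_n$. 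Positivity therefore lasts until about $d\approx\log(n!\,c_n^2)/c_n\sim n^4\log(n!)\sim n^5\log n$. The factor $n^4$ comes from the fourth-moment term $t^4$. The factor $n\log n$ comes from $\log n!$, the $1/n!$ weight of the identity. Neither comes from $t_0^{\,n-1}$ terms of $s_n$.

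\emph{Step~3 is unsupported, and the paper avoids it.} Even with the correct picture, your transfer argument would require excluding every other zero of $F_n$ from a disc of relative radius $1+\epsilon_n$, with $\epsilon_n\gg n^{-5}/\log n$. Here $F_n$ is an average of $n!$ rational functions. You would also need to bound $|F_n|^{-1}$ on the boundary circle, and you give no argument for either.

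The paper avoids complex analysis altogether. Because every $r_{n,j}\geq 0$, ratio induction on the recurrence shows that negativity of the truncated denominator $F_{n,\le D}(t_*)$ at a single real point $t_*$ already gives $\ell_{n,d}\geq t_*^{-d}$ for all $d\le D$. Truncation tames the identity pole: its truncated contribution is at most $C(1+\delta)^{D+1}/(n!\,\delta)$. The paper takes
\[
 t_*=t_0(1+\delta),\quad \delta=(1+\varepsilon)t_0^4,\quad \varepsilon=q^{-1/2},\quad D=\bigl\lfloor(1-\varepsilon)\log(n!)/\delta\bigr\rfloor .
\]
Then $G_n(t_*)+R(t_*)-1=-\varepsilon t_0^4+O(t_0^5)$. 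The identity contribution and the tail of $R$ beyond degree $D$ are both $o(\varepsilon t_0^4)$. This yields both estimates directly.
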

\begin{proof}
Write $q=n-1$, $N=n!$, and $P(t)=1-qt+s_n(t)$.
We first note a useful consequence of the recurrence~\eqref{eq:general-reciprocal-recurrence}. For $t_\ast>0$, if
\[
 F_{n,\leq D}(t_\ast):=1-qt_\ast+\sum_{j=1}^D r_{n,j}t_\ast^j<0,
\]
then $\ell_{n,d}\geq t_\ast^{-d}$ for $d\leq D$.
Indeed, induction on consecutive ratios gives
\[
 \frac{\ell_{n,d}}{\ell_{n,d-1}}
 \geq q-\sum_{j=1}^d r_{n,j}t_\ast^{j-1}
 \geq q-\sum_{j=1}^D r_{n,j}t_\ast^{j-1}>t_\ast^{-1}.
\]
Proposition~\ref{prop:general-lower} then gives
$\dim(C_n)_d\geq t_\ast^{-d}$ through degree $D$.

Assume $n\ge 4$. We have $q\ge 3$, and for $0<t\le q^{-1}$, we have $P(t)>0$. On the other hand, $P((q-1)^{-1})<0$.
Therefore,  the least positive root $\rho$ of $P$ satisfies
\[
 q^{-1}<\rho<(q-1)^{-1},\qquad \rho=q^{-1}+O(q^{-3}).
\]
{We have the factorization}
\[
 P(t)=(1-t/\rho)(1-U(t)),\qquad
 U(t)=\sum_{k=2}^{n-1}\sum_{j=1}^{k-1}\rho^{k-j}t^j.
\]
Hence $U(t)\geq0$ coefficientwise and $U(\rho)=O(q^{-2})$.
Consequently, if $P(t)^{-1}=\sum c_dt^d$, then
\begin{equation}\label{eq:auxiliary-coefficient-bound}
 0\leq c_d\leq C\rho^{-d},\qquad
 C=(1-U(\rho))^{-1}=1+O(q^{-2}).
\end{equation}

Separate the identity permutation in~\eqref{eq:general-invariant-series}:
\[
 \mathcal H_n(t)=\frac1{NP(t)}+R(t).
\]
We claim that, uniformly for $t=\rho(1+O(\rho^4))$,
\begin{equation}\label{eq:nonidentity-expansion}
 R(t)=1+t^4+O(t^5)+O(N^{-1}).
\end{equation}
For $\sigma\in S_n$, put $M=\operatorname{fix}(\sigma)$ and
$\lambda=\max(2,M)$. The trace recurrence from
\eqref{eq:general-resolution} gives
$|\tr(\sigma\big|_{(B_n)_j})|\leq\lambda^j$, since
\[
 \frac{|M-1|}{\lambda}+\sum_{k=2}^{n-1}\lambda^{-k}\leq1.
\]
For $\sigma\ne1$, $\lambda\leq n-2$, and in the stated range
$(1-\lambda t)^{-1}\leq1+4\lambda$ for large $n$.
Counting ordered tuples of fixed points shows that every fixed
moment of $M$ is bounded uniformly in $n$. Thus the tail of $R(t)$
starting in degree five is bounded in absolute value by
$t^5\mathbb E[\lambda^5(1+4\lambda)]=O(t^5)$.
Here \(\mathbb E\) denotes the uniform average over \(S_n\). This expectation is bounded by a constant independent of \(n\).
The first four moments of $M-1$ are
$0,1,1,4$, giving $[t^j]\mathcal H_n=0$ for $1\leq j\leq3$ and
$[t^4]\mathcal H_n=1$ when $n\geq5$. Removing the identity changes the
evaluation of these initial terms by $O(N^{-1})$.
This proves~\eqref{eq:nonidentity-expansion}.

The degree-four term in~\eqref{eq:nonidentity-expansion} explains the scale of the bound. Taking
$t_\ast/\rho-1$ slightly larger than $\rho^4$ makes the negative
value of $P(t_\ast)$ outweigh the correction $R(t_\ast)-1$.
The identity contribution carries a factor $1/N$ and is bounded
by a geometric sum with ratio $t_\ast/\rho$. This allows a
truncation degree of order $\rho^{-4}\log N\sim n^5\log n$.
The following choice makes this balance precise.

Set
\[
 \varepsilon=q^{-1/2},\qquad
 \delta=(1+\varepsilon)\rho^4,\qquad
 t_\ast=\rho(1+\delta),\qquad
 D=\left\lfloor\frac{(1-\varepsilon)\log N}{\delta}\right\rfloor.
\]
Since $\rho P'(\rho)=-1+O(\rho^2)$,
\eqref{eq:nonidentity-expansion} gives
\[
 P(t_\ast)+R(t_\ast)-1=-\varepsilon\rho^4+O(\rho^5).
\]
For large $n$, $D\geq n-1$, and
\[
 F_{n,\leq D}(t_\ast)=P(t_\ast)+R(t_\ast)-1
 +\frac1N\sum_{j=0}^D c_jt_\ast^j
 -\sum_{j>D}[t^j]R(t)\,t_\ast^j.
\]
By~\eqref{eq:auxiliary-coefficient-bound}, the truncated contribution
of the identity permutation is at most
\[
 \frac1N\sum_{j=0}^D c_jt_\ast^j
 \leq\frac{C(1+\delta)^{D+1}}{N\delta}
 =O(N^{-\varepsilon}/\delta)=o(\varepsilon\rho^4).
\]
The tail of $R$ after degree $D$ is at most
$2n\exp(-D/(2n))$, because $(n-2)t_\ast\leq1-1/(2n)$ for large $n$.
It follows that $F_{n,\leq D}(t_\ast)<0$.
Hence $\nu(C_n)\geq D+1$ and
$\log\dim C_n\geq D\log(1/t_\ast)$.
Finally,
\[
 D\sim n^4\log(n!)\sim n^5\log n,\qquad
 \log(1/t_\ast)\sim\log n,
\]
which proves both estimates.
\end{proof}

This proves Theorem~\ref{Thm:GS}.

\begin{remark}[Five variables]\label{rem:C5-lower-bound}
The known values $\nu(C_1),\ldots,\nu(C_4)$ are $1,2,4,10$,
which agree with the numbers of standard Young tableaux of size
$n$, summed over all shapes. The next tableau number, for $n=5$,
is $26$. Although four matching terms are only suggestive, this
coincidence makes determining $\nu(C_5)$ exactly particularly
intriguing.

The known Hilbert coefficients sharpen Proposition~\ref{prop:general-lower}
to
\[
 \dim(C_5)_{17}\geq74\,845\,761,\qquad
 \nu(C_5)\geq18,\qquad \dim C_5\geq352\,171\,573.
\]
Indeed, writing $\ell_d=\ell_{5,d}$, the recurrence gives positive
coefficients through degree $17$. The Hilbert coefficients in
Section~\ref{ssec:small-rank} give
$Q_5=t^6+12t^7+87t^8+O(t^9)$ in~\eqref{eq:general-GS}.
Its remaining coefficients are nonnegative, so
\[
 \dim(C_5)_d\geq \widehat{\ell}_d
 :=\ell_d+\ell_{d-6}+12\ell_{d-7}+87\ell_{d-8},
 \qquad 0\leq d\leq17,
\]
where $\ell_j=0$ for $j<0$. Evaluating $\widehat{\ell}_{17}$ and
$\sum_{d=0}^{17}\widehat{\ell}_d$ gives the stated bounds. The next reciprocal
coefficient is $\ell_{18}=-1\,596\,163\,074$, so this calculation
does not establish nonvanishing in degree $18$.
\end{remark}

\subsection{Quasisymmetric and standardized coinvariants}\label{ssec:ordered-lower}

The Golod--Shafarevich argument also gives lower bounds for $T_n$
and $\SC_n$, using their defining word sums directly. Let
$H_n=\NCQSym_n$ or $\FQSym_n$, respectively, and put
$h_{n,d}=\dim(H_n)_d$, with $h_{n,0}=1$. Explicitly, for $d\geq1$,
\begin{equation}\label{eq:ordered-relation-dimensions}
 h^T_{n,d}=\sum_{k=1}^{\min(n,d)}k!\,\genfrac{\{}{\}}{0pt}{}{d}{k},
 \qquad
 h^{\SC}_{n,d}=\sum_{k=0}^{n-1}\genfrac{\langle}{\rangle}{0pt}{}{d}{k}.
\end{equation}
Here $\genfrac{\{}{\}}{0pt}{}{d}{k}$ is a Stirling number of the second kind, and
$\genfrac{\langle}{\rangle}{0pt}{}{d}{k}$ counts permutations of $[d]$
with $k$ descents. The second formula follows because
$F_\sigma(X_n)\ne0$ precisely when $\sigma$ has fewer than $n$
descents. Both formulas hold over every field, since the defining
basis elements have disjoint word supports.

\begin{theorem}\label{thm:ordered-growth}
Let $A_n=T_n$ or $\SC_n$, over any field, and use the corresponding
sequence in~\eqref{eq:ordered-relation-dimensions}. Put
\[
 r_{n,d}=h_{n,d}-2h_{n,d-1}+h_{n,d-2}\quad(d\geq2),
 \qquad
 \left(1-(n-1)t+\sum_{d\geq2}r_{n,d}t^d\right)^{-1}
 =\sum_{d\geq0}\ell_{n,d}t^d.
\]
If $\ell_{n,0},\ldots,\ell_{n,D}$ are nonnegative, then
$\dim(A_n)_d\geq\ell_{n,d}$ for $0\leq d\leq D$.
Moreover, as $n\to\infty$,
\begin{align*}
 \nu(T_n)&\geq n\log n-n\log\log n-O(n),\\
 \nu(\SC_n)&\geq n\log n-O(n),
\end{align*}
and, for both families,
\[
 \log\dim A_n\geq(1-o(1))n(\log n)^2.
\]
\end{theorem}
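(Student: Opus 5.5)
The plan follows the proof of Proposition~\ref{prop:general-lower} and Theorem~\ref{thm:symmetric-growth}, but with a cruder presentation that needs no group averaging. First I build a presentation of $A_n$ with $n-1$ generators. Eliminate $F_1=M_{(\{1\})}=x_1+\cdots+x_n$, so $A_n$ is a quotient of the free algebra on $n-1$ letters. The defining ideal is generated by $(H_n)_+$. In degree $d$ we need relations only for a complement of the part already generated by lower-degree relations. That part contains $(H_n)_{d-1}\cdot p_1$ and $p_1\cdot(H_n)_{d-1}$, and the overlap of these two pieces contains $p_1(H_n)_{d-2}p_1$.

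Since $H_n$ is a subalgebra containing $p_1$, the left and right multiplication maps by $p_1$ are injective on $R_n$. A count of the space $p_1H_{d-1}+H_{d-1}p_1$ then gives at least $2h_{n,d-1}-h_{n,d-2}$ dimensions already accounted for. This holds provided the intersection $p_1R\cap Rp_1$ inside $H$ is at most $p_1H_{d-2}p_1$; I would verify this by a leading-word argument, or simply use that only an upper bound on the new relations is needed. Hence at most $r_{n,d}=h_{n,d}-2h_{n,d-1}+h_{n,d-2}$ new relations are needed in degree $d\geq2$.

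More carefully, I work modulo $p_1$. The image of $(H_n)_d$ in $T(V)$ has dimension $h_{n,d}-\dim((H_n)_d\cap\inv{p_1})$, and the products $\bar H_{d-1}$ times degree-one elements already lie in the generated ideal. So a bound $\dim\bigl((H_n)_d\cap\inv{p_1}\bigr)\geq 2h_{n,d-1}-h_{n,d-2}$ suffices. Any surplus relations only help, because Golod--Shafarevich~\cite[Theorem 2.1]{Ershov12} applies to any presentation whose relation counts are bounded above by $r_{n,d}$. The inequality is coefficientwise monotone in the relation counts, provided $r_{n,d}\geq0$, which is checked from the formulas~\eqref{eq:ordered-relation-dimensions}. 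The comparison argument at the end of Proposition~\ref{prop:general-lower} then gives $\dim(A_n)_d\geq\ell_{n,d}$ while the $\ell$'s stay nonnegative. This works over every field, since only dimension counts are used.

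For the asymptotics, write $F(t)=1-(n-1)t+\sum_{d\ge2}r_{n,d}t^d=(1-t)^2H(t)-(n+1)t+\dots$. Up to the linear correction, with $H(t)=\sum h_{n,d}t^d$, this is $F(t)=(1-t)^2H(t)-nt$ plus a constant adjustment. I then apply the criterion from the proof of Theorem~\ref{thm:symmetric-growth}: if the truncation $F_{\leq D}(t_*)<0$, then $\ell_{n,d}\geq t_*^{-d}$ for $d\leq D$. Take $t_*=c/n$ with $c$ slightly above $1$.

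The main obstacle is estimating $\sum_{d\le D}h_{n,d}t_*^d$ finely enough. For $\SC_n$, $h_{n,d}=d!$ for $d\leq n$, and Eulerian numbers with fewer than $n$ descents behave like $n^d$-ish growth for larger $d$. For $T_n$, $h_{n,d}$ counts ordered set partitions with at most $n$ blocks, which for $d\le n$ is about $d!/(2(\log2)^{d+1})$. The truncated sum stays $O(1)$ as long as $h_{n,d}t_*^d$ is small, that is $d!\,(c/n)^d\ll 1/n$. Stirling gives the threshold $d\approx n\log n/\log\!\cdots$. More precisely, I will choose $D$ as the largest $d$ with $h_{n,d}\leq (n/c)^d\cdot o(1)$. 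For $\SC_n$ this gives $D=n\log n-O(n)$. For $T_n$, the extra $(\log 2)^{-d}$ and the restriction to at most $n$ blocks lead to the $n\log\log n$ loss. With $\log(1/t_*)\sim\log n$, we get $\log\dim A_n\geq D\log(1/t_*)\sim n(\log n)^2$. I expect this balance for $T_n$ beyond $d>n$, where $h_{n,d}\approx n^d\cdot$(surjection corrections), to be the delicate step.
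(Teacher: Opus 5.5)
Your overall plan matches the paper's: eliminate $p_1$, bound the degree-$d$ relation count by $r_{n,d}$, apply Golod--Shafarevich, then use the truncated-denominator criterion at some $t_\ast$. However, both substantive steps are left open.

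\textbf{The relation count.} You correctly see that you need $\dim(p_1H_{d-1}\cap H_{d-1}p_1)\le h_{n,d-2}$, but you do not prove it. The containment of $p_1H_{d-2}p_1$ in the overlap that you note only bounds the accounted-for space from above. Your fallback that ``only an upper bound on the new relations is needed'' is backwards: that upper bound is exactly what the intersection bound delivers. Your ``modulo $p_1$'' version needs the same fact to bound $\dim(H_d\cap\inv{p_1})$ from below. Writing $p_1$ as a free generator, $p_1f=gp_1$ forces $f=up_1$ and $g=p_1u$ with $u\in R_{d-2}$. What remains is the cancellation property $p_1u\in H\Rightarrow u\in H$. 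This is not a free-algebra or leading-word fact. For the subalgebra generated by $p_1$, $p_1x_1$, $x_1p_1$, the element $p_1x_1p_1$ lies in $p_1H_2\cap H_2p_1$ but not in $p_1H_1p_1=\kk p_1^3$. The missing idea uses the structure of $\NCQSym_n$ and $\FQSym_n$. If $w,w'$ have the same packing (resp.\ standardization), then prepending to each its own least letter gives two words with the same packing (resp.\ standardization). Their coefficients in $p_1u$ are the coefficients of $w,w'$ in $u$. Hence $u$ is constant on classes and lies in $H$. Without this, the first assertion of the theorem is not established.

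\textbf{The asymptotics.} This part is only a heuristic, and its calibration is off. With $t_\ast=c/n$ the linear part is $1-(n-1)t_\ast=-(c-1)+c/n$. For fixed $c>1$, the factor $(nt_\ast)^d=c^d$ is $n^{\Theta(n)}$ at $d\asymp n\log n$, and the savings in $h_{n,d}$ there cannot absorb it. You must take $c-1\asymp1/n$. The paper uses $t_\ast=(1+1/n)/(n-1)$, so the margin is $-1/n$ and $(nt_\ast)^d\le e^{3d/n}\le n^3$ for $d=O(n\log n)$. The relation sum must then be shown smaller than $1/n$, not merely $O(1)$.

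The decisive estimates for $d\ge n$ are also absent, and the trivial bound $h_{n,d}\le n^d$ is useless there. The paper proves $k!\genfrac{\{}{\}}{0pt}{}{d}{k}\le C\sqrt d\,k^d\exp(-ke^{-d/k})$, which gives $h^T_{n,d}\le Cn\sqrt d\,n^de^{-ne^{-d/n}}$. From $h^{\SC}_{n,d}/d!=\mathrm{vol}\{u\in[0,1]^d:\sum u_i\le n\}$ and an exponential Chernoff bound, it gets $h^{\SC}_{n,d}\le C\sqrt d\,n^de^{-de^{-d/n}}$. These make the sums over $n\le d\le D$ equal to $o(n^{-2})$ for $D_T=\lfloor n(\log n-\log\log n-\log8)\rfloor$ and $D_{\SC}=\lfloor n(\log n-\log8)\rfloor$. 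The range $2\le d<n$ contributes $O(n^{-2})$.

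These estimates also locate the $n\log\log n$ loss, which you attribute partly to the wrong source. The coupon-collector factor $e^{-ne^{-d/n}}$ beats a power of $n$ only when $ne^{-d/n}\gtrsim\log n$. For $\SC_n$ the exponent carries $d\approx n\log n$ in place of $n$. The $(\log2)^{-d}$ growth of Fubini numbers matters only for $d\le n$, where every term is harmless. Finally, $F(t)=(1-t)^2H(t)-(n-2)t$ exactly, since $h_{n,0}=h_{n,1}=1$.
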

\begin{proof}
Write $H=H_n$ and $p=p_1$. We first remove redundant relations:
\[
 (pH_{d-1})\cap(H_{d-1}p)=pH_{d-2}p.
\]
Indeed, regarding $p$ as a free generator, $pf=gp$ forces
$f=up$ and $g=pu$. If $pu\in H$, then $u\in H$: compare the
coefficients of words $w,w'$ in the same packing or standardization
class, after prepending their respective least letters. The extended
words again have the same packing or standardization, and their
coefficients in $pu$ are those of $w,w'$ in $u$.
Choosing complements to $pH_{d-1}+H_{d-1}p$ and eliminating $p$
therefore gives a presentation on $n-1$ generators with at most
$r_{n,d}$ relations in degree $d$. The coefficient argument
of Proposition~\ref{prop:general-lower} proves the first assertion.

For the growth estimates, we use the truncated-denominator argument
from the proof of Theorem~\ref{thm:symmetric-growth}. Set
$t_\ast=(1+1/n)/(n-1)$.
For $d\geq n$, the nonnegative series $(e^z-1)^k$, evaluated at
$z=d/k$, and Stirling's estimate give
\[
 k!\,\genfrac{\{}{\}}{0pt}{}{d}{k}\leq
 C\sqrt d\,k^d\exp(-k e^{-d/k}).
\]
The right-hand side increases with $k\leq n$ when $d\geq n$.
Hence
\begin{equation}\label{eq:ordered-count-estimates}
 h^T_{n,d}\leq Cn\sqrt d\,n^d e^{-n e^{-d/n}},
 \qquad
 h^{\SC}_{n,d}\leq C\sqrt d\,n^d e^{-d e^{-d/n}}.
\end{equation}
For the second bound, inclusion-exclusion identifies
$h^{\SC}_{n,d}/d!$ with the volume of
$\{u\in[0,1]^d:\sum u_i\leq n\}$. Bounding its indicator by
$\exp((d/n)(n-\sum u_i))$ and integrating proves the estimate.

Take
\[
 D_T=\lfloor n(\log n-\log\log n-\log8)\rfloor,\qquad
 D_{\SC}=\lfloor n(\log n-\log8)\rfloor.
\]
For $n\leq d\leq D_T$, we have $ne^{-d/n}\geq8\log n$.
For $n\leq d\leq D_{\SC}$, the function $de^{-d/n}$ is decreasing,
so $de^{-d/n}\geq8(\log n-\log8)$.
Thus the exponential factors in~\eqref{eq:ordered-count-estimates}
are at most $n^{-8}$ and $8^8n^{-8}$, respectively.
Also, $(nt_\ast)^d\leq e^{3d/n}\leq n^3$ on both intervals for
large $n$. Since $D_A=O(n\log n)$ for $A=T,\SC$, summing the
pointwise estimates gives
\[
 \sum_{d=n}^{D_T}h^T_{n,d}t_\ast^d
 =O\!\left(n^{-5/2}(\log n)^{3/2}\right),\qquad
 \sum_{d=n}^{D_{\SC}}h^{\SC}_{n,d}t_\ast^d
 =O\!\left(n^{-7/2}(\log n)^{3/2}\right).
\]
Both sums are $o(n^{-2})$.
The degrees $2\leq d<n$ contribute $O(n^{-2})$: use
$h^{\SC}_{n,d}\leq h^T_{n,d}\leq2^{d-1}d!$ and split the sum at
$\lfloor n/4\rfloor$. The initial terms of this majorant have consecutive ratio
$2(d+1)t_\ast\leq3/4$ for large $n$, so their sum is $O(n^{-2})$.
Stirling's estimate makes both endpoint values of the remaining
interval exponentially small, and log-convexity gives an exponentially
small remaining sum. Since $0\leq r_{n,d}\leq h_{n,d}$, the truncated
denominator satisfies
\[
 1-(n-1)t_\ast+\sum_{d=2}^{D_A}r_{n,d}t_\ast^d
 =-1/n+O(n^{-2})<0.
\]
The same ratio induction as before gives
$\dim(A_n)_d\geq t_\ast^{-d}$ through degree $D_A$.
Thus $\nu(A_n)\geq D_A+1$ and
$\log\dim A_n\geq D_A\log(1/t_\ast)$, proving the estimates.
\end{proof}

In particular, $\nu(T_n)/n$ and $\nu(\SC_n)/n$ tend to infinity,
and both dimensions eventually exceed $(n!)^c$ for every fixed
$c>0$. The small-rank values alone do not reveal this growth.
The reciprocal recurrence~\eqref{eq:general-reciprocal-recurrence},
with $r_{n,1}=0$, gives the following explicit lower bounds:
\begin{center}
\begin{tabular}{crrrr}
\toprule
 & \multicolumn{2}{c}{$T_n$} & \multicolumn{2}{c}{$\SC_n$}\\
$n$ & $\nu(T_n)\geq$ & $\dim T_n\geq$
    & $\nu(\SC_n)\geq$ & $\dim\SC_n\geq$\\
\midrule
8 & $9$ & $10^6$ & $15$ & $10^{10}$\\
10 & $13$ & $10^{10}$ & $21$ & $10^{18}$\\
20 & $36$ & $10^{43}$ & $54$ & $10^{67}$\\
50 & $115$ & $10^{191}$ & $171$ & $10^{286}$\\
100 & $273$ & $10^{541}$ & $400$ & $10^{794}$\\
\bottomrule
\end{tabular}
\end{center}
The dimension bounds are rounded down from sums of certified
coefficients, as in Section~\ref{ssec:general-lower}.

\section{Adding noncommuting alphabets}\label{sec:diagonal}

For freely noncommuting variables, the passage to several alphabets
admits an exact description. The one-alphabet coinvariants determine
the multigraded dimensions and, in the symmetric case, the entire
multigraded character.

For $m\geq1$, let
\[
 R_{n,m}=\kk\inv{x_i^{(a)}:i\in[n],\ a\in[m]},
\]
where all letters are freely noncommuting, and give $x_i^{(a)}$
multidegree $\mathbf e_a$. The symmetric group {$S_n$} acts diagonally by
\[
 \sigma\cdot x_i^{(a)}=x_{\sigma(i)}^{(a)},
\]
for $\sigma\in S_n$. For a word $w$, define its index support by
\[
 \supp_{\mathrm{ind}}(w)=\{i\in[n]:x_i^{(a)}\text{ occurs in }w
       \text{ for some }a\in[m]\}.
\]
The diagonal quasi-symmetrizing action applies the rule of
Remark~\ref{rem:Hivert} to these indices:
$s_i\star w=s_i\cdot w$ if at least one of $i,i+1$ is absent from
$\supp_{\mathrm{ind}}(w)$, and $s_i\star w=w$ otherwise.
Denote the respective invariant
subalgebras by
\[
 \mathrm{NCDSym}_{n,m}=R_{n,m}^{S_n},
 \qquad
 \mathrm{NCDQSym}_{n,m}=R_{n,m}^{\star S_n},
\]
and put
\begin{align*}
 I_{n,m}&=\inv{\mathrm{NCDSym}_{n,m}^{+}},
 &C_{n,m}&=R_{n,m}/I_{n,m},\\
 J_{n,m}&=\inv{\mathrm{NCDQSym}_{n,m}^{+}},
 &T_{n,m}&=R_{n,m}/J_{n,m}.
\end{align*}
Thus $C_{n,1}=C_n$ and $T_{n,1}=T_n$. Diagonal realizations in
noncommuting alphabets also appear in~\cite[Section 7]{BL08}.

For $\mathbf a=(a_1,\ldots,a_d)\in[m]^d$, define the coloring map
\[
 c_{\mathbf a}(x_{i_1}\cdots x_{i_d})
   =x_{i_1}^{(a_1)}\cdots x_{i_d}^{(a_d)}.
\]
For standardized coinvariants, standardize the index word while
keeping the alphabet-label sequence fixed. Thus put
\[
 \SI_{n,m}
 =\inv{c_{\mathbf a}(F_\sigma(X_n)):
       d\geq1,\ \sigma\in S_d,\ \mathbf a\in[m]^d},
 \qquad
 \SC_{n,m}=R_{n,m}/\SI_{n,m}.
\]
In particular, $\SC_{n,1}=\SC_n$.

\begin{theorem}\label{thm:alphabet-independence}
Let $U=\kk^m$ and $D=C,T$, or $\SC$. Over any field, for every
$n,m\geq1$ and $d\geq0$,
\[
 (D_{n,m})_d\cong(D_n)_d\otimes U^{\otimes d}.
\]
The direct sums over $d$ are graded algebra isomorphisms,
with multiplication on the right given by
\[
 (a\otimes u)(b\otimes v)=ab\otimes(u\otimes v)
\]
for homogeneous $a,b$ and tensors $u,v$ of the corresponding degrees.
For $D=C$, the isomorphism is $S_n$-equivariant, with trivial $S_n$-action
on $U$. All three are equivariant for the natural $\mathrm{GL}(U)$-action
by linear substitutions among alphabets.

Consequently, for $D=C,T$, or $\SC$,
\[
 \Hilb(D_{n,m};q_1,\ldots,q_m)
 =\Hilb(D_n;q_1+\cdots+q_m),
 \qquad
 \Hilb(D_{n,m};t)=\Hilb(D_n;mt).
\]
In characteristic zero, with $\mathcal F_n$ as in the introduction,
\begin{align*}
 \operatorname{Frob}_{S_n}(C_{n,m};\mathbf z;q_1,\ldots,q_m)
 &=\mathcal F_n(\mathbf z;q_1+\cdots+q_m),\\
 \operatorname{Frob}_{S_n}(C_{n,m};\mathbf z;t)
 &=\mathcal F_n(\mathbf z;mt).
\end{align*}
Here $q_a$ records the degree in the $a$th alphabet, and $t$ records
total degree.
\end{theorem}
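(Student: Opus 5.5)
The plan is to write the free algebra itself as a tensor product and check that each defining ideal factors in the same way. In degree $d$,
\[
 (R_{n,m})_d=\bigoplus_{\mathbf a\in[m]^d}c_{\mathbf a}\bigl((R_n)_d\bigr),
\]
and the map $\Phi:(R_n)_d\otimes U^{\otimes d}\to(R_{n,m})_d$ with $\Phi(w\otimes e_{a_1}\otimes\cdots\otimes e_{a_d})=c_{\mathbf a}(w)$ is a linear isomorphism. It is multiplicative for the product $(a\otimes u)(b\otimes v)=ab\otimes(u\otimes v)$, because $c_{\mathbf a}(w)c_{\mathbf b}(w')=c_{\mathbf a\mathbf b}(ww')$. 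The action of $S_n$ on index letters commutes with every $c_{\mathbf a}$, so $\Phi$ is $S_n$-equivariant with trivial action on $U$. A linear substitution $x_i^{(a)}\mapsto\sum_b g_{ba}x_i^{(b)}$ acts on the second factor exactly as $g^{\otimes d}$ does, so $\Phi$ is also $\mathrm{GL}(U)$-equivariant. The whole theorem then reduces to the claim
\[
 \Phi^{-1}\bigl((\mathcal I_{n,m})_d\bigr)=(\mathcal I_n)_d\otimes U^{\otimes d}
\]
for each of the three ideal pairs $\mathcal I=I,J,\SI$.

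First I would describe the invariant (relation) spaces in the same form. For a word $w$ in $R_{n,m}$, the $S_n$-action, the diagonal $\star$-action, and standardization of the index word all preserve the alphabet-label sequence $\mathbf a$. Each one acts on the index word exactly as the corresponding one-alphabet operation does, since index support and standardization depend only on the index word. Hence $\mathrm{NCDSym}_{n,m}$ and $\mathrm{NCDQSym}_{n,m}$ decompose as $\bigoplus_{\mathbf a}c_{\mathbf a}(\NCSym_n)_d$ and $\bigoplus_{\mathbf a}c_{\mathbf a}(\NCQSym_n)_d$: an invariant splits into its $\mathbf a$-components, each component is invariant, and each is the $c_{\mathbf a}$-image of a one-alphabet invariant. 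For $\SC$ this decomposition is just the definition. In each case, therefore, the relation space in degree $d$ is $\Phi\bigl((H_n)_d\otimes U^{\otimes d}\bigr)$ with $H_n=\NCSym_n$, $\NCQSym_n$ or $\FQSym_n$.

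Next I would show that the generated ideals correspond. The degree-$d$ part of $\inv{H_n^+}$ is spanned by products $pfq$ with $p,q$ words and $f\in H_n^+$. Under $\Phi$, the element $c_{\mathbf a}(p)\,c_{\mathbf b}(f)\,c_{\mathbf c}(q)$ corresponds to $pfq\otimes(u\otimes v\otimes w)$. As $\mathbf a,\mathbf b,\mathbf c$ range freely, these tensors span $(pfq)\otimes U^{\otimes d}$, which gives both inclusions. The key point is that the label sequence can be chosen independently on the context and on the relation. Passing to quotients, $(D_{n,m})_d\cong(D_n)_d\otimes U^{\otimes d}$ as algebras, with the stated equivariance.

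The Hilbert series and Frobenius formulas then follow by taking characters. The multigraded character of $U^{\otimes d}$ under the diagonal torus is $(q_1+\cdots+q_m)^d$. For $C$, the $S_n$-action is trivial on $U^{\otimes d}$, so the Frobenius characteristic in degree $d$ is $\operatorname{Frob}((C_n)_d)(q_1+\cdots+q_m)^d$. Setting every $q_a=t$ gives the total-degree versions. The step I expect to need care is the $\star$-invariant decomposition. I would verify directly that $s_i\star c_{\mathbf a}(w)=c_{\mathbf a}(s_i\star w)$, which holds because the rule depends only on $\supp_{\mathrm{ind}}$, and this equals the support of the uncolored index word. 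Everything else is bookkeeping.
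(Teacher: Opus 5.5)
Your proposal is correct and follows essentially the same route as the paper. The coloring maps $c_{\mathbf a}$ split $(R_{n,m})_d$ by label sequence and intertwine the ordinary and quasi-symmetrizing actions, with the standardized relations colored by definition; the concatenation identity then matches products $ufv$ to identify each ideal as $\bigoplus_{\mathbf a}c_{\mathbf a}((\mathcal I_n)_d)$, and the Hilbert and Frobenius formulas follow from $(q_1+\cdots+q_m)^d$. Your explicit map $\Phi$ is simply the paper's identification with the standard tensor basis of $U^{\otimes d}$.
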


\begin{proof}
The coloring maps decompose $(R_{n,m})_d$ into copies of $(R_n)_d$,
indexed by alphabet-label sequences. Each map intertwines both
the ordinary and the quasi-symmetrizing actions with their
one-alphabet counterparts. Their invariant subspaces therefore
decompose in the same way. The standardized relations are colorings
of their one-alphabet counterparts by definition.

Coloring also respects concatenation:
\[
 c_{\boldsymbol\alpha\boldsymbol\beta}(uv)
 =c_{\boldsymbol\alpha}(u)c_{\boldsymbol\beta}(v).
\]
To identify the generated ideals, decompose each factor of a product
$ufv$, where $f$ is a positive-degree invariant or a standardized
relation, by alphabet-label sequence. Every resulting product is a coloring of a one-alphabet
relation of the same form. Conversely, coloring a one-alphabet product
$ufv$ gives a product whose middle factor is a defining relation, so it belongs to
the corresponding multialphabet ideal. Thus
\begin{align*}
 (I_{n,m})_d
 &=\bigoplus_{\mathbf a\in[m]^d}c_{\mathbf a}((I_n)_d),\\
 (J_{n,m})_d
 &=\bigoplus_{\mathbf a\in[m]^d}c_{\mathbf a}((J_n)_d),\\
 (\SI_{n,m})_d
 &=\bigoplus_{\mathbf a\in[m]^d}c_{\mathbf a}((\SI_n)_d).
\end{align*}
Taking quotients proves the decompositions. The identification with
the standard tensor basis of $U^{\otimes d}$ proves the stated
equivariance.
Compatibility with multiplication follows from the concatenation
identity above.

The copy indexed by $\mathbf a$ has multigraded weight
$q_{a_1}\cdots q_{a_d}$. Since
\[
 \sum_{\mathbf a\in[m]^d}q_{a_1}\cdots q_{a_d}
 =(q_1+\cdots+q_m)^d,
\]
summing dimensions, or $S_n$-characters, proves the multigraded
identities. Specializing every $q_a$ to $t$ gives the total-degree
identities.
\end{proof}

For $D=C$ when $\operatorname{char}(\kk)\nmid n!$, and for
$D=T$ or $\SC$ over every field, these quotients are finite dimensional and
\[
 \nu(D_{n,m})=\nu(D_n),
 \qquad
 \dim_\kk D_{n,m}=\Hilb(D_n;m).
\]
All one-alphabet degree bounds therefore apply unchanged.

\begin{remark}
The symmetric factorization is a property of tensor algebras.
Let $D_G(V)$ be the quotient defined in Remark~\ref{rem:general-coinvariants}, and
let $U$ be a finite-dimensional vector space with trivial $G$-action.
The same proof gives
\[
 D_G(V\otimes U)_d\cong D_G(V)_d\otimes U^{\otimes d},
\]
equivariantly for $G\times\mathrm{GL}(U)$ and compatibly with the
multiplication described above. No characteristic assumption is needed.
\end{remark}

{
Assume now that $\kk$ has characteristic zero. The first vanishing
degree also controls stability as the number of alphabets increases.
Indeed, $s_1^d=\sum_{\lambda\vdash d}f^\lambda s_\lambda$, where
$f^\lambda$ is the number of standard tableaux of shape $\lambda$.
Theorem~\ref{thm:alphabet-independence} therefore gives stable Schur
multiplicities for $D_{n,m}$, with every constituent visible once
$m\geq\nu(D_n)-1$. This extends to mixed alphabets by projection.

Let $E,F,W$ range over finite-dimensional alphabet spaces, and let
$A_n(E,F,W)$ be a family of quotients of
$T(\kk^n\otimes(E\oplus F\oplus W))$ by positive-degree homogeneous
$S_n$-stable relations natural under all linear maps between alphabet
spaces of each type.
For example,
\[
 A_n(E,F,W)=\Sym(\kk^n\otimes E)\otimes
 \bigwedge(\kk^n\otimes F)\otimes T(\kk^n\otimes W)
\]
has commuting, anticommuting, and free letters, respectively, with
letters in distinct tensor factors commuting. For $D=C,T$, or $\SC$,
denote by $D_n^{\mathrm{mix}}(E,F,W)$ the quotient of $A_n(E,F,W)$
by the images of the corresponding colored defining relations above.
For $D=C$, this is the quotient by all positive-degree $S_n$-invariants:
averaging lifts every invariant through the projection.

\begin{corollary}\label{cor:mixed-stability}
In characteristic zero, fix $n$, let $D=C,T$, or $\SC$, and set $N=\nu(D_n)-1$.
There are finite-dimensional multiplicity spaces $M_{\lambda,\mu,\eta}$,
independent of $E,F,W$, such that
\[
 D_n^{\mathrm{mix}}(E,F,W)\cong
 \bigoplus_{|\lambda|+|\mu|+|\eta|\leq N}
 M_{\lambda,\mu,\eta}\otimes
 \mathbb S_\lambda(E)\otimes\mathbb S_\mu(F)\otimes\mathbb S_\eta(W),
\]
as graded $\mathrm{GL}(E)\times\mathrm{GL}(F)\times\mathrm{GL}(W)$-modules,
where $\mathbb S_\lambda$ denotes the Schur functor. For $D=C$, the
multiplicity spaces are $S_n$-modules and the isomorphism is also
$S_n$-equivariant. Thus the characters have universal nonnegative
Schur coefficients, with an additional Schur factor in $\mathbf z$
in the symmetric case. Every constituent is visible when each
alphabet space has dimension at least $N$; absent types may be
omitted. In particular,
$\nu(D_n^{\mathrm{mix}}(E,F,W))\leq\nu(D_n)$.
\end{corollary}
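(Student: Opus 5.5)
The plan is to realize $D_n^{\mathrm{mix}}(E,F,W)$ as a $\mathrm{GL}$-equivariant quotient of the purely free quotient $D_{n,m}$ from Theorem~\ref{thm:alphabet-independence}, and then read off Schur multiplicities from polynomial-functor theory. Put $U=E\oplus F\oplus W$ with $m=\dim U$. By naturality, $A_n(E,F,W)$ is a quotient of $T(\kk^n\otimes U)=R_{n,m}$ (after choosing bases), and the projection is $\mathrm{GL}(E)\times\mathrm{GL}(F)\times\mathrm{GL}(W)$-equivariant and $S_n$-equivariant. The defining relations of $D_n^{\mathrm{mix}}$ are by definition the images of the colored relations. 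Hence $D_n^{\mathrm{mix}}(E,F,W)$ is a quotient of $D_{n,m}$ by the image of the kernel of $R_{n,m}\to A_n$. In particular it vanishes in degrees $\geq\nu(D_n)$, because $(D_{n,m})_d\cong(D_n)_d\otimes U^{\otimes d}$. This already gives $\nu(D_n^{\mathrm{mix}})\leq\nu(D_n)$. For $D=C$ one must also check that the invariants of $A_n$ are exactly the images of invariants of $R_{n,m}$. This holds by Reynolds averaging in characteristic zero, as noted before the statement.

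Next, view each graded piece as a functor. In degree $d$, the kernel of $(R_{n,m})_d\to (D_n^{\mathrm{mix}})_d$ is a subspace that is natural in $E,F,W$ separately. I would then use the multigrading by degrees $(d_E,d_F,d_W)$, which is the weight decomposition for the scalar tori. This makes each $(D_n^{\mathrm{mix}})_{d_E,d_F,d_W}$ a quotient of $(D_n)_d\otimes E^{\otimes d_E}\otimes F^{\otimes d_F}\otimes W^{\otimes d_W}$, summed over shuffles of positions, by a subfunctor. Equivalently, it is a polynomial functor of multidegree $(d_E,d_F,d_W)$ in three variables. In characteristic zero, Schur--Weyl duality (the classification of polynomial functors) says such a functor is $\bigoplus M_{\lambda,\mu,\eta}\otimes\mathbb S_\lambda(E)\otimes\mathbb S_\mu(F)\otimes\mathbb S_\eta(W)$. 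Here $|\lambda|=d_E$, $|\mu|=d_F$, $|\eta|=d_W$, and the multiplicity spaces $M$ are independent of the spaces. Concretely, $M_{\lambda,\mu,\eta}=\operatorname{Hom}_{S_{d_E}\times S_{d_F}\times S_{d_W}}(S^\lambda\otimes S^\mu\otimes S^\eta,\ \text{multilinear part})$. The multilinear part is computed by taking $E,F,W$ large enough and extracting the weight-$(1,\dots,1)$ space.

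The main obstacle is to make this functoriality rigorous. The relations must be natural under all linear maps and not just invertible ones, so that the kernel really is a subfunctor. One also has to check that the $S_n$-action commutes with everything, which holds because the relations are $S_n$-stable; the multiplicity spaces then inherit $S_n$-actions for $D=C$. Once the quotient is known to be a polynomial functor of total degree at most $N=\nu(D_n)-1$, the remaining claims are standard. The $\mathbb S_\lambda(E)$ with $|\lambda|\leq N$ are nonzero as soon as $\dim E\geq N\geq\ell(\lambda)$, so every constituent is visible. When a type is absent, only $\lambda=\varnothing$ contributes, and that factor may be omitted. Taking characters then gives universal nonnegative Schur coefficients, with the extra $\mathbf z$-Schur factor coming from the $S_n$-modules $M$ in the symmetric case.
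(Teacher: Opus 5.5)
Your proof is correct. The first half, realizing $D_n^{\mathrm{mix}}(E,F,W)$ as a quotient of $D_{n,m}$ and reading off $\nu(D_n^{\mathrm{mix}})\leq\nu(D_n)$ from Theorem~\ref{thm:alphabet-independence}, is exactly what the paper does; for the decomposition you take a different route. The paper does not invoke the classification of polynomial functors. For fixed $E,F,W$, it decomposes each graded piece as a polynomial representation of $\mathrm{GL}(E)\times\mathrm{GL}(F)\times\mathrm{GL}(W)$. It then gets independence of the multiplicities from a retraction: adding alphabets and then setting the extra ones to zero is the identity on the smaller quotient. So the smaller quotient is the part of the larger one of degree zero in the extra alphabets, and its character is obtained by setting the extra variables to zero. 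Since $s_\lambda$ specializes to itself or to $0$, each multiplicity is the same for every alphabet size at which it is visible; for $D=C$ the $S_n$-isotypic components are treated one at a time. Your approach instead makes each multihomogeneous piece a polynomial functor in three variables and applies Schur--Weyl duality. This gives canonical multiplicity spaces, namely $\operatorname{Hom}_{S_{d_E}\times S_{d_F}\times S_{d_W}}(S^\lambda\otimes S^\mu\otimes S^\eta,-)$ applied to the multilinear part, an isomorphism natural in $E,F,W$, and the $S_n$-module structure for $D=C$ for free. The cost is the heavier classification theorem, whereas the paper's argument stays at the level of characters. The point you flag as the main obstacle is in fact immediate. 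Under the canonical identification $(\kk^n\otimes U)^{\otimes d}\cong(\kk^n)^{\otimes d}\otimes U^{\otimes d}$, the proof of Theorem~\ref{thm:alphabet-independence} shows the degree-$d$ colored ideal is $(I_n)_d\otimes U^{\otimes d}$, and likewise for $J_n$ and $\SI_n$. This is visibly stable under $\mathrm{id}\otimes\phi^{\otimes d}$ for every linear map $\phi$, invertible or not, and the relations of $A_n$ are natural by hypothesis.
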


\begin{proof}
The mixed quotient is a quotient of $D_{n,m}$, where
$m=\dim E+\dim F+\dim W$. For $m=0$ the quotient is $\kk$;
otherwise Theorem~\ref{thm:alphabet-independence} gives the degree bound.
Polynomial representations of the three
general linear groups decompose into products of Schur functors.
Adding alphabets and setting them to zero preserve the defining
relations; their composite is the identity on the smaller quotient.
The part of degree zero in the extra alphabets is therefore precisely
the smaller quotient. Its character is obtained by setting the extra
variables to zero. Since a Schur polynomial specializes to
the same Schur polynomial or to zero, its multiplicity is independent
of the alphabet dimensions whenever it is visible. All partitions
involved have size at most $N$, proving the stated stable range.
The same argument applies to each $S_n$-isotypic component when $D=C$.
\end{proof}

For commuting symmetric coinvariants, this stability was proved by
F.~Bergeron~\cite[Theorem 2.2]{Ber13}. The bosonic--fermionic case is
discussed in~\cite[Section 2]{Ber20BF} and strengthened to a universal
super-Schur expansion by Lentfer~\cite[Theorem 1.2]{Lentfer25}.
Related quasisymmetric harmonic spaces occur in~\cite{ABB05,BCSZ23}.
Our method treats all three families and mixtures containing free
alphabets uniformly: the one-alphabet bound $\nu(D_n)$ controls both
their degrees and a common sufficient range for representation stability.

\begin{remark}
For $D=C,T$, or $\SC$, requiring distinct alphabets to commute, without
imposing commutation within an alphabet, gives a further quotient
$\overline D_{n,m}$ of $D_{n,m}$. Projection and the inclusion of one
alphabet give $\nu(\overline D_{n,m})=\nu(D_n)$ for $m\geq1$.
Permuting alphabets and setting extra alphabets to zero preserve the
relations, so the multigraded Hilbert series are specializations of a
single symmetric function of degree at most $\nu(D_n)-1$. The same
holds for the $S_n$-Frobenius series when $D=C$.
Arbitrary linear substitutions among alphabets, however, need not
preserve these commutation relations: replacing every $x_j^{(2)}$
by $x_j^{(2)}+x_j^{(1)}$ sends $[x_i^{(1)},x_j^{(2)}]=0$ to a relation
requiring $[x_i^{(1)},x_j^{(1)}]=0$. Thus the series stabilize, but
the interpretation of their Schur coefficients as
$\mathrm{GL}_m$-multiplicities in Corollary~\ref{cor:mixed-stability}
does not follow without its compatibility hypothesis.
\end{remark}

\begin{remark}
Taking graded duals of the projection embeds the corresponding inverse
system into the free inverse system, with contragredient actions, as
in~\cite[Corollary 2.5]{ABL10}. The usual polynomial harmonic realization
$H_{n,m}$ of the symmetric commuting quotient has the same graded
$\mathrm{GL}_m\times S_n$-character as that quotient, and hence the same
stability. The Vandermonde spaces $\mathcal E_{n,m}\subseteq H_{n,m}$
generated by differentiation and polarization are preserved by alphabet
inclusions and by setting extra alphabets to zero~\cite[Section 2]{Ber20}.
Thus the specialization argument above also gives their stability,
without assuming $\mathcal E_{n,m}=H_{n,m}$.
\end{remark}

\begin{remark}
The same results extend to the $r$-quasisymmetric coinvariants
$T_n^{(r)}$ of Section~\ref{sec:r-ncqsym}, for every fixed $r\geq1$.
For the diagonal $r$-quasi-symmetrizing action, count occurrences of
each index across all alphabets. Coloring preserves these counts,
so the proof of Theorem~\ref{thm:alphabet-independence} gives the same
tensor decomposition, Hilbert-series identities, and equality of first
vanishing degrees over every field. In characteristic zero, the same
projection and specialization arguments give stable Schur multiplicities
for the corresponding mixed-alphabet quotients under the hypotheses of
Corollary~\ref{cor:mixed-stability}, with $\nu(T_n^{(r)})-1$ as both
the degree bound and a sufficient stable dimension for each alphabet
space present.
\end{remark}
}

\section{Further questions}\label{sec:remaining}

{For freely noncommuting alphabets, the results above reduce the
multidiagonal dimension and character problems to one alphabet.}
We conclude with four questions about
these finite-dimensional algebras and the scope of the support method.
Throughout this section, the field has characteristic zero.

\begin{question}
What are the exact Hilbert polynomials of $C_n$, $T_n$, and
$\SC_n$, and their first vanishing degrees?
\end{question}

A combinatorial basis or a description of normal words would address
both parts at once. The small-rank polynomials in
Section~\ref{sec:sharp} give initial data, and
Remark~\ref{rem:C5-lower-bound} shows that $C_5$ survives
through degree $17$. The growth estimates in
Theorems~\ref{thm:symmetric-growth} and~\ref{thm:ordered-growth}
leave open the asymptotic orders of the dimensions and vanishing
degrees.
For $T_n$, Remark~\ref{rem:ncqsym-improvement}
shows how the explicit quasisymmetric relations improve on
full-support vanishing.
 For the
intermediate quotients $T_n^{(r)}$, a filtration recording letter
multiplicities as well as support may improve
Theorem~\ref{thm:r-bound}.

\begin{question}
Can the Schur coefficients of $\mathcal F_n(\mathbf z;t)$
be described uniformly in $n$?
\end{question}

Section~\ref{ssec:small-rank} determines them for $n\leq4$.
Writing
\[
 \mathcal F_n(\mathbf z;t)
 =\sum_{d\geq0}\sum_{\lambda\vdash n}
   m_{\lambda,d}t^d s_\lambda(\mathbf z),
\]
the coefficient $m_{\lambda,d}$ is the multiplicity of the Specht
module $S^\lambda$ in $(C_n)_d$. Averaging shows that the trivial
representation occurs only in degree zero. For $n\geq2$, the sole
linear relation is $x_1+\cdots+x_n=0$, so
$(C_n)_1\cong S^{(n-1,1)}$. A description of the higher-degree
multiplicities would, by Theorem~\ref{thm:alphabet-independence},
determine the multigraded characters for every number of alphabets.

\begin{question}\label{q:sharp}
What is the optimal upper bound for $\nu(R_X/L_X)$ among families
$(L_S)$ on the nonempty subsets of an $n$-letter alphabet that are
compatible with restriction and contain every full-support word?
In particular, is the bound $7\cdot2^{n-4}$ of
Proposition~\ref{prop:seven-bound} attained for every $n\geq4$?
\end{question}

In characteristic zero, the full abelianization preimages of the
ordinary symmetric coinvariant ideals form another admissible family:
the product of all variables is an elementary symmetric polynomial,
so every full-support word lies in the corresponding ideal. The
four-variable quotient has highest degree $6$, attaining the new
bound $\nu=7$. Sharpness for larger alphabets, and simultaneous
attainment by a single family on all finite alphabets, remain open.
These problems concern the general restriction criterion, beyond
the particular relations defining $T_n$.

\begin{question}
Do $C_n$ and $T_n$ admit natural geometric realizations, for example
as convolution algebras, compatible with their abelianization maps
to commutative coinvariant rings?
\end{question}
Such realizations could explain their Hilbert polynomials and, for
$C_n$, their graded $S_n$-characters.

\bibliographystyle{alpha}
\bibliography{references}

\newcommand{\etalchar}[1]{$^{#1}$}
\begin{thebibliography}{BGN{\etalchar{+}}26}

\bibitem[AB03]{AB03}
Jean-Christophe Aval and Nantel Bergeron.
\newblock {Catalan} paths and quasi-symmetric functions.
\newblock {\em Proceedings of the American Mathematical Society},
  131(4):1053--1062, 2003.

\bibitem[ABB04]{ABB04}
Jean-Christophe Aval, Fran{\c{c}}ois Bergeron, and Nantel Bergeron.
\newblock Ideals of quasi-symmetric functions and super-covariant polynomials
  for {$S_n$}.
\newblock {\em Advances in Mathematics}, 181(2):353--367, 2004.

\bibitem[ABB06]{ABB05}
Jean-Christophe Aval, Fran{\c{c}}ois Bergeron, and Nantel Bergeron.
\newblock Diagonal {Temperley--Lieb} invariants and harmonics.
\newblock {\em S{\'e}minaire Lotharingien de Combinatoire}, 54A:Art. B54Aq, 19
  pp., 2006.

\bibitem[ABL10]{ABL10}
Jean-Christophe Aval, Nantel Bergeron, and Huilan Li.
\newblock Non-commutative combinatorial inverse systems.
\newblock {\em International Journal of Algebra}, 4(21):1003--1020, 2010.

\bibitem[ALvW24]{ALW24}
Farid Aliniaeifard, Shu~Xiao Li, and Stephanie van Willigenburg.
\newblock Generalized chromatic functions.
\newblock {\em International Mathematics Research Notices}, 2024(5):4456--4500,
  2024.

\bibitem[Ani82]{Anick82}
David~J. Anick.
\newblock Non-commutative graded algebras and their {Hilbert} series.
\newblock {\em Journal of Algebra}, 78(1):120--140, 1982.

\bibitem[BCSZ23]{BCSZ23}
Nantel Bergeron, Kelvin Chan, Farhad Soltani, and Mike Zabrocki.
\newblock Quasisymmetric harmonics of the exterior algebra.
\newblock {\em Canadian Mathematical Bulletin}, 66(3):997--1013, 2023.

\bibitem[Ber78]{Bergman78}
George~M. Bergman.
\newblock The diamond lemma for ring theory.
\newblock {\em Advances in Mathematics}, 29:178--218, 1978.

\bibitem[Ber13]{Ber13}
Fran{\c{c}}ois Bergeron.
\newblock Multivariate diagonal coinvariant spaces for complex reflection
  groups.
\newblock {\em Advances in Mathematics}, 239:97--108, 2013.

\bibitem[Ber20a]{Ber20BF}
Fran{\c{c}}ois Bergeron.
\newblock The bosonic-fermionic diagonal coinvariant modules conjecture.
\newblock arXiv preprint 2005.00924, 2020.

\bibitem[Ber20b]{Ber20}
Fran{\c{c}}ois Bergeron.
\newblock {$(GL_k\times S_n)$}-modules of multivariate diagonal harmonics.
\newblock arXiv preprint 2003.07402, 2020.

\bibitem[BG21]{BG21}
Czes{\l}aw Bagi{\'n}ski and Piotr Grzeszczuk.
\newblock On the generic family of {Cayley} graphs of a finite group.
\newblock {\em Journal of Combinatorial Theory, Series A}, 184:105495, 2021.

\bibitem[BGN{\etalchar{+}}26]{BGNST26}
Nantel Bergeron, Lucas Gagnon, Philippe Nadeau, Hunter Spink, and Vasu Tewari.
\newblock The quasisymmetric flag variety: A toric complex on noncrossing
  partitions.
\newblock {\em Forum of Mathematics, Pi}, 14:e18, 2026.

\bibitem[BI73]{BI73}
George~M. Bergman and I.~Martin Isaacs.
\newblock Rings with fixed-point-free group actions.
\newblock {\em Proceedings of the London Mathematical Society}, 27(1):69--87,
  1973.

\bibitem[BL08]{BL08}
Fran{\c{c}}ois Bergeron and Aaron Lauve.
\newblock Invariant and coinvariant spaces for the algebra of symmetric
  polynomials in non-commuting variables.
\newblock {\em Discrete Mathematics \& Theoretical Computer Science
  Proceedings}, AJ:543--556, 2008.
\newblock {FPSAC} 2008.

\bibitem[Bor53]{Borel53}
Armand Borel.
\newblock Sur la cohomologie des espaces fibr{\'e}s principaux et des espaces
  homog{\`e}nes de groupes de {Lie} compacts.
\newblock {\em Annals of Mathematics}, 57(1):115--207, 1953.

\bibitem[Bre93]{Bredon93}
Glen~E. Bredon.
\newblock {\em Topology and Geometry}, volume 139 of {\em Graduate Texts in
  Mathematics}.
\newblock Springer-Verlag, New York, 1993.

\bibitem[BRRZ08]{BRRZ08}
Nantel Bergeron, Christophe Reutenauer, Mercedes Rosas, and Mike Zabrocki.
\newblock Invariants and coinvariants of the symmetric group in noncommuting
  variables.
\newblock {\em Canadian Journal of Mathematics}, 60(2):266--296, 2008.

\bibitem[BZ09]{BZ09}
Nantel Bergeron and Mike Zabrocki.
\newblock The {Hopf} algebras of symmetric functions and quasi-symmetric
  functions in non-commutative variables are free and co-free.
\newblock {\em Journal of Algebra and Its Applications}, 8(4):581--600, 2009.

\bibitem[Che55]{Chevalley55}
Claude Chevalley.
\newblock Invariants of finite groups generated by reflections.
\newblock {\em American Journal of Mathematics}, 77(4):778--782, 1955.

\bibitem[DHT02]{DHT02}
G{\'e}rard Duchamp, Florent Hivert, and Jean-Yves Thibon.
\newblock Noncommutative symmetric functions {VI}: Free quasi-symmetric
  functions and related algebras.
\newblock {\em International Journal of Algebra and Computation},
  12(5):671--717, 2002.

\bibitem[Ers12]{Ershov12}
Mikhail Ershov.
\newblock {Golod--Shafarevich} groups: a survey.
\newblock {\em International Journal of Algebra and Computation},
  22(5):1230001, 2012.

\bibitem[Hai02]{Haiman02}
Mark Haiman.
\newblock Vanishing theorems and character formulas for the {Hilbert} scheme of
  points in the plane.
\newblock {\em Inventiones Mathematicae}, 149(2):371--407, 2002.

\bibitem[Hiv99]{H99}
Florent Hivert.
\newblock {\em Combinatoire des fonctions quasi-symétriques}.
\newblock PhD thesis, Université de Marne-la-Vallée, 1999.
\newblock 1999MARN0051.

\bibitem[Hiv00]{Hivert00}
Florent Hivert.
\newblock {Hecke} algebras, difference operators, and quasi-symmetric
  functions.
\newblock {\em Advances in Mathematics}, 155(2):181--238, 2000.

\bibitem[Hiv05]{H05}
Florent Hivert.
\newblock Local action of the symmetric group and generalizations of
  quasi-symmetric functions.
\newblock Unpublished preprint, 27 pp., version dated December 4,
  https://perso.lisn.upsaclay.fr/hivert/PAPER/GenQSym.ps, 2005.

\bibitem[Len25]{Lentfer25}
John Lentfer.
\newblock Diagonal supersymmetry for coinvariant rings.
\newblock arXiv preprint 2505.14885, 2025.

\bibitem[MR95]{MR95}
Claudia Malvenuto and Christophe Reutenauer.
\newblock Duality between quasi-symmetric functions and the {Solomon} descent
  algebra.
\newblock {\em Journal of Algebra}, 177(3):967--982, 1995.

\bibitem[RS06]{RS06}
Mercedes~H. Rosas and Bruce~E. Sagan.
\newblock Symmetric functions in noncommuting variables.
\newblock {\em Transactions of the American Mathematical Society},
  358(1):215--232, 2006.

\end{thebibliography}
\end{document}